\providecommand\@dotsep{5}
\def\listtodoname{List of Todos}
\def\listoftodos{\@starttoc{tdo}\listtodoname}
\numberwithin{equation}{section}
\def\cal{\mathcal}
\newtheorem{lemma}{Lemma}
\newtheorem{proposition}{Proposition}
\newtheorem{theorem}{Theorem}
\newtheorem{corollary}{Corollary}
\newtheorem{remark}{Remark}
\title[ 
]
{
Multiplicity of positive solutions for a quasilinear Schr\"odinger equation
with an almost critical nonlinearity}
\author[G. M. Figueiredo]{Giovany M. Figueiredo}
\author[U. B. Severo]{Uberlandio B. Severo}
\author[G. Siciliano]{Gaetano Siciliano}
\address[G. M. Figueiredo]{\newline\indent Faculdade de Matem\'atica
\newline\indent 
Universidade Federal do Par\'a
\newline\indent
66075-110, Bel\'em, PA, Brazil}
\email{\href{mailto:giovany@ufpa.br}{giovany@ufpa.br}}
\address[U. B. Severo]{
\newline\indent 
Departamento de Matem\'atica
\newline\indent 
Universidade Federal da Para\'\i ba
\newline\indent
58051-900, Jo\~ao Pessoa, PB, Brazil}
\email{\href{mailto: uberlandio@mat.ufpb.br}{uberlandio@mat.ufpb.br}}
\address[G. Siciliano]{\newline\indent Departamento de Matem\'atica
\newline\indent 
Instituto de Matem\'atica e Estat\'istica
\newline\indent 
 Universidade de S\~ao Paulo 
\newline\indent 
Rua do Mat\~ao 1010,  05508-090 S\~ao Paulo, SP, Brazil }
\email{\href{mailto:sicilian@ime.usp.br}{sicilian@ime.usp.br}}
\thanks{G. M. Figueiredo was partially
supported by  CNPq, FAPDF and CAPES, Brazil. U. B. Severo was partially supported by 
CNPq grant 308735/2016-1, Brazil. G. Siciliano  was partially supported by
Fapesp, CNPq and CAPES, Brazil. }
\subjclass[2010]{
35J62, 
35J20, 
74G35,
}
\keywords{Quasilinear Schr\"odinger equation, variational methods,
Ljusternick-Schnirelmann category,  Morse theory, multiplicity of solutions}
\begin{document}

\maketitle

\begin{abstract}
In this paper we prove an existence result of multiple positive solutions
for the following quasilinear problem
\begin{equation*} 
\left\{
\begin{array}[c]{ll}
-\Delta u - \Delta (u^2)u = |u|^{p-2}u & \mbox{ in } \Omega \\
u= 0 &\mbox{ on } \partial\Omega,
\end{array}
\right.
\end{equation*}
where $\Omega$ is a smooth and bounded domain in $\mathbb R^{N},N\geq3$.
More specifically we prove that, for $p$ near the critical exponent $22^{*}=4N/(N-2)$,
the number of positive solutions is estimated below by  topological invariants
of the domain $\Omega$: the Ljusternick-Schnirelmann category 
and the Poincar\'e polynomial.
\end{abstract}

\section{Introduction}

It is well-known that the 
 general quasilinear Schr\"odinger
 equation
 \begin{equation*}
i\partial_t \psi =-\Delta
\psi+V(x)\psi-\widetilde{h}(|\psi|^2)\psi- \kappa\Delta
[\rho(|\psi|^2)]\rho'(|\psi|^2)\psi,
\end{equation*}
where $\psi:\mathbb{R}\times  \mathbb{R}^N \rightarrow\mathbb{C}$ is the unknown,
$\kappa$ is a real constant,
serves as  models for several 
physical phenomena depending on the form of the 
given potential $V=V(x)$ and the given nonlinearities $\rho(s)$ and $\widetilde h$.

In the case $\rho(s) = (1+s)^{1/2}$, the equation 
models the self-channeling of a high-power ultra short
laser in matter, see \cite{Borovskii-Galkin,Ritchie}.
It also appears in fluid mechanics
\cite{Kosevich-Ivanov}, in the theory of Heisenberg ferromagnets
and magnons \cite{Takeno-Homma}, in dissipative quantum mechanics
and in condensed matter theory \cite{Makhankov-Fedyanin}. 

When $\rho(s)=s$, 
which is the case we are interested here, the
above equation reduces to
\begin{equation}\label{SCH2}
i\partial_t \psi =-\Delta \psi+V(x)\psi- \kappa\Delta
[|\psi|^2]\psi-\widetilde{h}(|\psi|^2)\psi.
\end{equation}
It was shown that a system describing the self-trapped electron on
a lattice can be reduced in the continuum limit to (\ref{SCH2})
and numerics results on this equation are obtained in
\cite{Brizhik}. In \cite{Hartmann}, motivated by the nanotubes and
fullerene related structures, it was proposed and shown that a
discrete system describing the interaction of a 2-dimensional
hexagonal lattice with an excitation caused by an excess electron
can be reduced to (\ref{SCH2}); moreover numerics results have been done
on domains of disc, cylinder or sphere type. The
superfluid film equation in plasma physics  has also the structure
(\ref{SCH2}), see \cite{Kurihura}.

The search of 
standing wave solutions  $\psi(t,x)=\exp(-iFt)u(x),\; F\in \mathbb{R}$ of \eqref{SCH2}
under a power type nonlinearity $\widetilde h$ reduces
the equation  to
\begin{equation}\label{simp}
-\Delta u -\Delta (u^2)u +W(x)u= h(u)\\
\end{equation}
where $W(x)=V(x)-F$ as $V(x)$,
$h(u)=\widetilde{h}(u^2)u$ and we have assumed, without loss of
generality, that $\kappa =1$.

The quasilinear equation (\ref{simp}) in the whole
$\mathbb{R}^{N}$ has received special attention in the past
several years and various devices have been used:
the method of Lagrange multipliers, which gives a solution 
with an unknown multiplier 
$\lambda$ in front of the nonlinear term (see \cite{Poppenberg-Schmitt-Wang})
and the remarkable change of variable to get a semilinear 
equation in  appropriate Orlicz space framework 
(see \cite{Jeanjean-Colin,do O-Severo,Liu-Wang II}). We refer the reader also to the papers
 \cite{OMS,Liu-Wang I,Liu-wang-wang, RS} and
references therein. 

\medskip

Here we are interested in a special case of \eqref{simp}, that is, we study
the equation in a smooth and bounded domain  $\Omega\subset \mathbb R^{N},N\geq3$,
with constant potential $V(x)=F$ (hence $W(x)=0$)
and with homogeneous Dirichlet boundary conditions; in other words,
 we are interested in the search of positive solutions for the problem
\begin{equation}\label{principal} 
\left\{
\begin{array}[c]{ll}
-\Delta u - \Delta (u^2)u = |u|^{p-2}u & \mbox{ in } \Omega \\
u= 0 &\mbox{ on } \partial\Omega,
\end{array}
\right.
\end{equation}
with $p\in (4,22^{*})$. 
As usual $2^{*}=2N/(N-2)$ is the Sobolev critical exponent of the embedding of $H^{1}_{0}(\Omega)$
into Lebesgue spaces, and  $22^{*}$ turns out to be the critical exponent for the problem,
as it is shown in \cite{Jeanjean-Colin}.

The main goal of this paper is to show that
for $p$ near the critical exponent $22^{*}$, the topology of the domain
influences the number of positive solutions
in the sense of Theorem \ref{th:main}  and Theorem \ref{th:mainMorse} below.

 Before to state our main results we recall that if $Y$ is a closed
set of a topological space $X$, we denote the
Ljusternik-Schnirelmann category of $Y$ in $X$ by $\mbox{cat}_X (Y)$,
which is the least number of closed and contractible sets in $X$
that cover $Y$. Moreover, $\mbox{cat} \, X$ denotes $\mbox{cat}_X (X)$. 
Then we have the first multiplicity result.
\begin{theorem}\label{th:main} 
There exists $\overline{p} \in (4,22^{*})$ such that for any $p\in [\overline{p},22^{*})$, problem 
\eqref{principal} has at least $cat \,\Omega$ positive weak solutions.
Moreover if $\Omega$ is not contractible in itself then 
\eqref{principal} has at least $cat \,\Omega+1$ positive weak solutions.
\end{theorem}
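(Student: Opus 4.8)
The plan is to reduce \eqref{principal} to a semilinear problem via the change of variable of Colin--Jeanjean \cite{Jeanjean-Colin}, and then to apply the Benci--Cerami photography method to the resulting family of functionals as $p\uparrow 22^{*}$. Let $f\in C^{\infty}(\mathbb{R})$ be the odd function with $f'(t)=(1+2f(t)^{2})^{-1/2}$ on $[0,\infty)$ and $f(0)=0$; elementary ODE estimates give $|f(t)|\le|t|$, $|f(t)|\le 2^{1/4}|t|^{1/2}$ and $|f(t)f'(t)|\le 2^{-1/2}$. If $v\in H^{1}_{0}(\Omega)$ is a weak solution of
\begin{equation*}
-\Delta v=f'(v)\,|f(v)|^{p-2}f(v)\quad\text{in }\Omega,\qquad v=0\ \text{ on }\partial\Omega,
\end{equation*}
then, by elliptic regularity and the subcritical growth discussed below, $v\in C^{1,\gamma}(\overline{\Omega})$ and $u:=f(v)$ is a weak (in fact classical) solution of \eqref{principal}; weak solutions of the displayed problem are exactly the critical points of
\begin{equation*}
I_{p}(v)=\frac12\int_{\Omega}|\nabla v|^{2}\,\ud x-\frac1p\int_{\Omega}|f(v)|^{p}\,\ud x,\qquad v\in H^{1}_{0}(\Omega).
\end{equation*}
Since $|f(v)|^{p}\le 2^{p/4}|v|^{p/2}$ with $p/2<2^{*}$, and $|f'(v)|f(v)|^{p-2}f(v)|\le 2^{(p-4)/4}|v|^{(p-2)/2}$ with $(p-2)/2<2^{*}-1$ (both bounds on exponents being equivalent to $p<22^{*}$), the functional $I_{p}$ is well defined and of class $C^{1}$ on $H^{1}_{0}(\Omega)$, the associated Nemytskii operator is completely continuous from $H^{1}_{0}(\Omega)$ to $H^{-1}(\Omega)$, and $I_{p}$ satisfies the Palais--Smale condition at every level (boundedness of Palais--Smale sequences uses $p>4$ via a monotonicity property of $s\mapsto sf'(s)/f(s)$). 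As $p>4$, $I_{p}$ has the mountain-pass geometry and the Nehari manifold $\mathcal{N}_{p}=\{v\in H^{1}_{0}(\Omega)\setminus\{0\}:I_{p}'(v)v=0\}$ is a $C^{1}$ constraint, radially homeomorphic to the unit sphere of $H^{1}_{0}(\Omega)$ (hence contractible), on which $I_{p}$ is bounded below by a positive constant; put $c_{p}=\inf_{\mathcal{N}_{p}}I_{p}$. Carrying out the construction with $f'(v)(f(v)^{+})^{p-1}$ in place of $f'(v)|f(v)|^{p-2}f(v)$ forces every nontrivial critical point $v$ to be nonnegative, hence positive by the strong maximum principle, so that $u=f(v)$ is a positive solution of \eqref{principal}.

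Next I would analyse the behaviour of $(I_{p})$ as $p\uparrow 22^{*}$. Let $c_{\infty}>0$ denote the limiting critical level — the mountain-pass value of the limiting (critical) problem, characterized through the best constant of the embedding $\mathcal{D}^{1,2}(\mathbb{R}^{N})\hookrightarrow L^{2^{*}}(\mathbb{R}^{N})$, cf. \cite{Jeanjean-Colin} — and establish $c_{p}\to c_{\infty}$, the upper bound by testing with a function concentrated at an interior point of $\Omega$ and the lower bound by concentration--compactness. Fix $\delta>0$ so small that the tubular neighbourhood $\Omega_{\delta}=\{x:\dist(x,\Omega)<\delta\}$ retracts onto $\overline{\Omega}$, so that $\cat_{\Omega_{\delta}}(\overline{\Omega})=\cat\Omega$. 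Using suitable rescaled and truncated concentrating functions centred at $y\in\Omega$, radially projected onto $\mathcal{N}_{p}$, one builds a continuous map $\Phi_{p}\colon\Omega\to\mathcal{N}_{p}$ with $\lim_{p\to22^{*}}\sup_{y\in\Omega}I_{p}(\Phi_{p}(y))=c_{\infty}$. Introducing the barycentre
\begin{equation*}
\beta(v)=\frac{\int_{\Omega}x\,|\nabla v|^{2}\,\ud x}{\int_{\Omega}|\nabla v|^{2}\,\ud x},
\end{equation*}
one checks that $\beta(\Phi_{p}(y))\to y$ uniformly on $\Omega$, so $\beta\circ\Phi_{p}$ is homotopic to the inclusion $\Omega\hookrightarrow\Omega_{\delta}$ for $p$ close to $22^{*}$. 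The complementary step is this: for every $\delta>0$ there exist $\alpha>0$ and $\overline{p}\in(4,22^{*})$ such that $v\in\mathcal{N}_{p}$, $p\in[\overline{p},22^{*})$ and $I_{p}(v)\le c_{p}+\alpha$ force $\beta(v)\in\Omega_{\delta}$; this is proved by contradiction from a concentration--compactness analysis of almost-minimizing families for $c_{p}$, excluding vanishing, dichotomy and boundary concentration uniformly for $p$ in a left neighbourhood of $22^{*}$, and using that $c_{\infty}$ is not attained by a profile concentrating away from $\Omega$.

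Combining the two steps, and shrinking $\overline{p}$ if necessary so that also $\sup_{y\in\Omega}I_{p}(\Phi_{p}(y))<c_{p}+\alpha$ (possible since both quantities converge to $c_{\infty}$), the sublevel set $\mathcal{N}_{p}^{c_{p}+\alpha}:=\{v\in\mathcal{N}_{p}:I_{p}(v)\le c_{p}+\alpha\}$ contains $\Phi_{p}(\Omega)$, the map $\beta$ sends $\mathcal{N}_{p}^{c_{p}+\alpha}$ into $\Omega_{\delta}$, and $\mathrm{id}_{\Omega}\simeq\beta\circ\Phi_{p}$ factors through $\mathcal{N}_{p}^{c_{p}+\alpha}$; the standard category estimate for a map factoring (up to homotopy) the inclusion then gives $\cat(\mathcal{N}_{p}^{c_{p}+\alpha})\ge\cat_{\Omega_{\delta}}(\overline{\Omega})=\cat\Omega$. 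Since $I_{p}|_{\mathcal{N}_{p}}$ is $C^{1}$, bounded below and satisfies Palais--Smale, Lusternik--Schnirelmann theory yields at least $\cat(\mathcal{N}_{p}^{c_{p}+\alpha})\ge\cat\Omega$ critical points of $I_{p}$ lying in $\mathcal{N}_{p}^{c_{p}+\alpha}$, and via $u=f(v)$ these give $\cat\Omega$ positive solutions of \eqref{principal}. If in addition $\Omega$ is not contractible, then $\mathcal{N}_{p}^{c_{p}+\alpha}$ cannot be contractible — otherwise $\Phi_{p}$, hence $\beta\circ\Phi_{p}\simeq\mathrm{id}_{\Omega}$ composed with the retraction $\Omega_{\delta}\to\overline{\Omega}$, would be null-homotopic — whereas $\mathcal{N}_{p}$ is contractible; a by-now-standard deformation argument (were there no critical value of $I_{p}|_{\mathcal{N}_{p}}$ above $c_{p}+\alpha$, $\mathcal{N}_{p}$ would deformation retract onto $\mathcal{N}_{p}^{c_{p}+\alpha}$, contradicting the difference in category) then produces an additional critical value $c^{*}>c_{p}+\alpha$, hence one more positive solution, for a total of at least $\cat\Omega+1$.

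The step I expect to be the main obstacle is the uniformity in $p$ of the two photography estimates near the critical exponent — namely $\sup_{\Omega}I_{p}\circ\Phi_{p}\to c_{\infty}$ and, above all, the low-sublevel barycentre estimate — both of which rest on a concentration--compactness analysis that is delicate because of the non-homogeneous nonlinearity $|f(v)|^{p}$ (comparable to $|v|^{p}$ near the origin and to $|v|^{p/2}$ at infinity) and requires sharp control of the asymptotics of $f$ and of $c_{p}$ as $p\to22^{*}$.
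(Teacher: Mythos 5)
Your proposal is correct and follows essentially the same route as the paper: the Colin--Jeanjean change of variable, the Nehari manifold with ground-state levels converging to the critical level $m_{*}$, a photography map into a low sublevel together with a barycentre localization proved by concentration--compactness, Lusternik--Schnirelmann theory for the first $\cat\Omega$ solutions, and a contractibility/deformation argument (the Benci--Cerami--Passaseo cone) for the extra one, with positivity obtained by truncating the nonlinearity. The two steps you flag as the main obstacles are precisely what the paper's Sections \ref{sec:Nehari}--\ref{bary} are devoted to, namely Theorem \ref{prop:limitem*} on $\mathfrak m_{p}\to m_{*}$ (via uniform control of the projections $t_{*}(w_{p})$ between Nehari manifolds, which requires the new properties of $f$ in Lemma \ref{lem:outras}) and Proposition \ref{baricentri}, whose proof rests on the Splitting Lemma \ref{lem:splitting} for the critical functional $I_{*}$.
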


By implementing the Morse theory we are able to prove also the following
multiplicity result.
Here $\mathcal P_{t}(\Omega)$ is the Poincar\'e polynomial of $\Omega$,
whose definition we recall later.

\begin{theorem}\label{th:mainMorse}
There exists $\overline{p} \in (4,22^{*})$ such that for any $p\in [\overline{p},22^{*})$, problem 
\eqref{principal} has at least $2\mathcal P_{1}(\Omega) -1$ positive solutions, possibly counted with their multiplicity.
\end{theorem}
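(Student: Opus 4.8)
The plan is to recast \eqref{principal} as a semilinear problem through the dual change of variable of Colin--Jeanjean and then to run Morse theory on the associated Nehari manifold, reusing the concentration and ``photography'' estimates that already underlie Theorem~\ref{th:main}. Let $f\in C^{\infty}(\mathbb R)$ be the odd function with $f'(t)=(1+2f(t)^{2})^{-1/2}$, $f(0)=0$; a direct computation shows that $u=f(v)$ transforms the energy of \eqref{principal} into
\begin{equation*}
J_{p}(v)=\frac12\int_{\Omega}|\nabla v|^{2}\,\ud x-\frac1p\int_{\Omega}\bigl(f(v)^{+}\bigr)^{p}\,\ud x,\qquad v\in H^{1}_{0}(\Omega),
\end{equation*}
the nonlinearity being truncated to its positive part. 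Since $\bigl(f(v)^{+}\bigr)^{p}\lesssim|v|^{p/2}$ for large $v$ and $p<22^{*}$ forces $p/2<2^{*}$, the functional $J_{p}$ is of class $C^{2}$, equal to $\tfrac12\|\cdot\|^{2}$ plus a term with compact derivative, and therefore satisfies the Palais--Smale condition on all of $H^{1}_{0}(\Omega)$. Every nonzero critical point $v$ of $J_{p}$ is nonnegative (test the Euler equation with $v^{-}$ and use that $f$ is odd and increasing), hence strictly positive by the maximum principle, and then $u=f(v)$ is a positive weak solution of \eqref{principal}. Finally the Nehari manifold $\mathcal N_{p}=\{v\neq0:\langle J_{p}'(v),v\rangle=0\}$ is a natural constraint and a complete $C^{1}$ Hilbert manifold, radially homeomorphic to an open, contractible subset of the unit sphere of $H^{1}_{0}(\Omega)$, on which $J_{p}$ is bounded below, coercive, and attains its infimum $c_{p}>0$ at a ground state.

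Next I would import, from the asymptotics $p\uparrow 22^{*}$, the two topological ingredients that drive Theorem~\ref{th:main}: for each small $\delta>0$ there is $\overline{p}\in(4,22^{*})$ such that, for all $p\in[\overline{p},22^{*})$, writing $\mathcal N_{p}^{a}=\{v\in\mathcal N_{p}:J_{p}(v)\le a\}$, one has (i)~a photography map $\Phi_{p}\colon\Omega\to\mathcal N_{p}^{\,c_{p}+\delta}$, obtained by transplanting rescaled, cut-off Aubin--Talenti instantons centred at points of $\Omega$ through $f^{-1}$ and projecting onto $\mathcal N_{p}$, with $\sup_{\Omega}J_{p}\circ\Phi_{p}\le c_{p}+\delta$; and (ii)~a barycentre map $\beta_{p}\colon\mathcal N_{p}^{\,c_{p}+\delta}\to\Omega_{\rho}$, with $\Omega_{\rho}$ a fixed tubular neighbourhood of $\Omega$, well defined because any $v\in\mathcal N_{p}$ with $J_{p}(v)\le c_{p}+\delta$ concentrates near a single point of $\overline\Omega$, and such that $\beta_{p}\circ\Phi_{p}$ is homotopic, in $\Omega_{\rho}$, to the inclusion $\iota\colon\Omega\hookrightarrow\Omega_{\rho}$. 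As $\Omega$ is smooth and bounded, $\iota$ is a homotopy equivalence for $\rho$ small, so $(\Phi_{p})_{*}\colon H_{*}(\Omega)\to H_{*}(\mathcal N_{p}^{\,c_{p}+\delta})$ is a split monomorphism; hence $\mathcal N_{p}^{\,c_{p}+\delta}$ is connected and $\mathcal P_{t}\bigl(\mathcal N_{p}^{\,c_{p}+\delta}\bigr)=\mathcal P_{t}(\Omega)+\mathcal Z(t)$, with $\mathcal Z$ a polynomial having non-negative coefficients.

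Finally I would assemble the Morse relations for $J_{p}$ on $\mathcal N_{p}$, writing $i_{t}(u)$ for the polynomial Morse index of a critical point $u$. If there are infinitely many critical points we are done; otherwise, shrink $\delta$ so that $b:=c_{p}+\delta$ is a regular value and fix a regular value $M$ above all critical levels. The Morse relation for $(\mathcal N_{p}^{\,b},\emptyset)$, namely $\sum_{J_{p}(u)\le b}i_{t}(u)=\mathcal P_{t}(\mathcal N_{p}^{\,b})+(1+t)\mathcal Q_{1}(t)$, yields at $t=1$ at least $\mathcal P_{1}(\mathcal N_{p}^{\,b})\ge\mathcal P_{1}(\Omega)$ solutions with energy $\le b$, counted with multiplicity. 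Since $\mathcal N_{p}^{\,M}$ is contractible (Palais--Smale, coercivity, and no critical point above $M$), the long exact sequence of the pair gives $H_{q}(\mathcal N_{p}^{\,M},\mathcal N_{p}^{\,b})\cong\widetilde H_{q-1}(\mathcal N_{p}^{\,b})$, whence $\mathcal P_{t}(\mathcal N_{p}^{\,M},\mathcal N_{p}^{\,b})=t\bigl(\mathcal P_{t}(\mathcal N_{p}^{\,b})-1\bigr)$; combined with the Morse relation for $(\mathcal N_{p}^{\,M},\mathcal N_{p}^{\,b})$ this gives at $t=1$ at least $\mathcal P_{1}(\mathcal N_{p}^{\,b})-1\ge\mathcal P_{1}(\Omega)-1$ further solutions with critical value in $(b,M]$. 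Adding the two contributions produces at least $2\mathcal P_{1}(\Omega)-1$ positive solutions of \eqref{principal}, counted with their multiplicity.

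The genuinely delicate point is ingredient (ii): showing, uniformly for $p$ near $22^{*}$, that every $v\in\mathcal N_{p}$ with $J_{p}(v)\le c_{p}+\delta$ concentrates near one point of $\overline\Omega$, so that $\beta_{p}$ is well defined and takes values in $\Omega_{\rho}$. This is the quasilinear counterpart of the Benci--Cerami energy/concentration estimate: one must identify the least-energy level of the limiting critical problem responsible for the threshold $22^{*}$, show that $c_{p}$ converges to it, and rule out the splitting of low-energy functions into two or more bubbles by a Brezis--Lieb-type argument carried out in the $f$-transformed variables. Granting this, together with the standard (if slightly technical) verification of the $C^{2}$ regularity of $J_{p}$ and of the natural-constraint and deformation properties of $\mathcal N_{p}$, the rest of the argument is routine.
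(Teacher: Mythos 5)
Your proposal is correct and follows essentially the same Benci--Cerami scheme as the paper: the Colin--Jeanjean change of variable, a $C^{2}$ functional whose second derivative is a Riesz isomorphism minus a compact operator, the photography/barycentre pair giving $\mathcal P_{t}(\mathcal N_{p}^{b})=\mathcal P_{t}(\Omega)+\mathcal Q(t)$ for $p$ near $22^{*}$, and Morse relations split at the low sublevel and above it, summed at $t=1$. The only cosmetic differences are that you run the Morse relations directly on the Nehari manifold (the paper transfers them to the free functional via $\mathcal P_{t}(I_{p}^{a},I_{p}^{\overline r})=t\,\mathcal P_{t}(\mathcal N_{p}^{a})$, which only shifts everything by a factor of $t$) and that you build the photography map from truncated instantons rather than from the radial ground state on a small ball $B_{r}$, whose energy bound $\mathfrak m_{p,r}<\mathfrak m_{p}+\varepsilon$ the paper gets for free from $\lim_{p\to 22^{*}}\mathfrak m_{p}=\lim_{p\to 22^{*}}\mathfrak m_{p,r}=m_{*}$.
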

In whole this paper, a function
$u:\Omega\rightarrow \mathbb{R}$ is called a (weak) solution of
(\ref{principal}) if $u\in H^{1}_{0}(\Omega)\cap
L^{\infty}_{loc}(\Omega)$ and satisfies
\begin{equation*}
\label{solucao fraca}
\int_{\Omega} \Big[(1+2|u|^2)\nabla
u\nabla\varphi +2|\nabla u|^2 u\varphi  \Big]
=\int_{\Omega}|u|^{p-2}u\varphi\,\,\, \mbox{for all} \,\,\, \varphi\in
C_0^{\infty}(\Omega).
\end{equation*}
We point out that, among the solutions we find there is  the ground state, called $\mathfrak g_{p}$, that is the
solution with minimal energy $\mathfrak m_{p}$ in the sense specified in Section \ref{sec:Nehari}.
\medskip

It is worth to mention that  problem  \eqref{principal}
has been studied recently in \cite{Li-Wei}
in a bounded domain and the authors prove,  by using Morse theory,  existence results of multiple solutions.
However the number of solutions found in \cite{Li-Wei} is not in relation with the topology of the domain $\Omega$,
and nothing is said on the sign of the solutions.

So our paper is the first one to relate the number of positive solutions to the
topology of the domain when the exponent is near the critical one $22^{*}$.

\medskip

\subsection{The approach and the main ideas}
Our approach in proving Theorem \ref{th:main} and Theorem \ref{th:mainMorse}
is variational; indeed we first
 use the change of
variable $u=f(v)$
introduced by  \cite{Jeanjean-Colin} to transform problem \eqref{principal}
 into 
\begin{equation*}
\left\{
\begin{array}[c]{ll}
 -\Delta v =|f(v)|^{p-2}f(v)f'(v) & \text{in } \Omega \medskip \\
 v=0 & \text{on }  \partial{\Omega}.               
\end{array}
\right. 
\end{equation*}
Then we find its solutions as critical points of a $C^{1}$ functional on
the so called Nehari manifold, which is a natural constraint.
In particular we show that the functional on the Nehari manifold is bounded 
below, achieves the {\sl ground state level} $\mathfrak m_{p}$, for $p\in (4,22^{*})$,
and by means of the Ljusternik-Schnirelmann and Morse theories  we prove 
the multiplicity results.

We say that most of all the results we prove in the next two sections are fundamental in order
to achieve the Proposition \ref{baricentri} in Section \ref{sec:Nehari}, which is a key step in order to employ
the Ljusternick-Schnirelmann theory.

\medskip

As it is usual by using  a variational approach,
at some point  it will be important to have a compactness condition, that we recall
here once for all for the reader's convenience.
If $H$ is an Hilbert space, $\mathcal N\subset H$ a submanifold and $I:H\to \mathbb R$ a $C^{1}$ functional,
 we say that  
$I$ satisfies the Palais-Smale condition on $\mathcal N$ at level $a\in \mathbb R$, $(PS)_{a}$ condition for short,
 if  any sequence $\{u_{n}\}\subset \mathcal N$ such that
\begin{equation}\label{eq:psseq}
I(u_{n}) \to a\quad \text{and} \quad (I_{|\mathcal N})' (u_{n})\to 0
\end{equation}
possesses a  subsequence converging to $u\in \mathcal N$.
We will also say that $I_{|\mathcal N}$ satisfies the $(PS)$ condition.

In general a sequence satisfying the  conditions in \eqref{eq:psseq} is named Palais-Smale
sequence at level $a$, or $(PS)_{a}$ sequence for short. If the value $a$ is not really important,
we will simply speak of  $(PS)$ condition and/or $(PS)$ sequences.

 Let us say that, as it will be evident by the method
we use, we will need a representation of the $(PS)$
sequences for the functional related to the critical problem, that is
\begin{equation}\label{eq:introcritical}
\left\{
\begin{array}[c]{ll}
-\Delta v =|f(v)|^{22^{*}-2}f(v)f'(v) & \quad\text{ in } \Omega\\
v=0 & \quad\text{ on } \partial\Omega.
\end{array}
\right.
\end{equation}

 This representation for the quasilinear problem
has never appeared in the literature, to the best of our knowledge.
Hence as  a byproduct of our proofs we obtain
  the  representation of the $(PS)$ sequences (known also as {\sl Splitting Lemma},
  see Lemma \ref{lem:splitting})
for the critical problem \eqref{eq:introcritical},
 which may be useful also in other different context.
 
 Furthermore,
concerning the critical case,
we show in Lemma \ref{lem:nonex} a nonexistence result for problem
\eqref{eq:introcritical}  in a star-shaped domain when $p=22^{*}$; this means that
 the exponent $22^{*}$ is critical also with respect
to the  existence of solutions and implies that
 the ground state level $m_{*}$ is not achieved in this case.
 Nevertheless we show that for every domain $\lim_{p\to22^{*}} \mathfrak m_{p} = m_{*}$, see 
 Theorem \ref{prop:limitem*}. We think this last result is interesting of its right.

\medskip

The ideas we use to prove Theorems \ref{th:main} and \ref{th:mainMorse}
 are mainly inspired from that of \cite{BC1,BC3,BCP} where the authors 
consider the model  problem
$$
\left\{
\begin{array}[c]{ll}
-\Delta u + \lambda u = |u|^{p-2}u &\quad \text{in }\Omega, \\
u=0 &\quad \text{on }\partial\Omega
\end{array}
\right.
$$
and ask how the topology of the domain $\Omega$ affects the number of positive solutions
depending on suitable ``limit'' values of the parameters $\lambda,p$.
They introduced new techniques in order to have a ``picture'' of $\Omega$ in a suitable sublevel of the 
energy functional associated to the problem, and then they use the Ljusternick-Schnirelmann and Morse
 theory in order to deduce a multiplicity result.
Actually the authors treat two cases:
\begin{itemize}
\item[(i)] when $p$ is  fixed and  the parameter $\lambda$ 
is made sufficiently large,
\item[(ii)] when $\lambda$ is fixed and the parameter $p$
in the nonlinearity tends to the critical value $2^{*}$
\end{itemize}
and find solutions for $\lambda$ large in the first case, and for $p$ near $2^{*}$
in the second case.

After the  mentioned  papers \cite{BC1,BC3,BCP}, these 
techniques  have been successfully used to prove multiplicity of positive solutions
for equations involving also different   operators then the Laplacian,
and even in presence of a potential.
However the existing literature mainly concerns with case (i):
many papers appeared
where the parameter $\lambda$ can be moved, after a rescaling, into the potential
or even as a factor which expands the domain $\Omega$.
For more details and results in this direction, we refer the reader to the papers
\cite{Alves, CV3} for the $p-$Laplacian, \cite{AFF} for the magnetic Laplacian in expanding domains,
  \cite{GEdw} for a system  of fractional Schr\"odinger-Poisson type,
   \cite{GGM} for the fractional Laplacian in expanding domains,
\cite{AUG, CGU,CV2} for  quasilinear  operators:
in all these papers multiplicity result, depending on the topology 
of the domain, have been proved for $\lambda$ large.

However  case (ii) in which the role of the parameter is taken by the exponent of the nonlinearity,
that we believe to be very interesting too,
has been much less explored. Indeed this has motivated the present paper.
To the best of our knowledge there are just two other papers (besides \cite{BC1})  which
consider the case when the parameter $p$ approaches the critical exponent
obtaining multiplicity of solutions depending on the topology of the domain:
they are \cite{Sicilia} where the Schr\"odinger-Poisson system is studied and \cite{SF}
where the fractional Laplacian is considered.

We point out that the ideas of Benci, Cerami and Passaseo in 
\cite{BC1,BC3,BCP} are not immediately applicable
to our problem
due to the fact that there is the change of variable $f$ which has to be treated very
carefully. In fact we need some new properties of the change of variable, which never appeared before,
see 
Lemma \ref{lem:outras}.
Moreover, in contrast to the paper \cite{BC1,BC3,BCP} we can not work
on the $L^{p}-$constraint due to the lack of homogeneity in the equation
which does not permit to eliminate the Lagrange multiplier once it appears.

\subsection{Structure of the paper}
The paper is organized as follows.

In Section \ref{sec:variational} we give the variational setting of the problem.
In particular the change of variable given in  \cite{Jeanjean-Colin} is introduced in order
to have a well defined and $C^{1}$ functional whose critical points are exactly
the solutions we are looking for.

In Section \ref{sec:Nehari} we introduce the Nehari manifold associated to the problems
settled in the domain $\Omega$,
in both cases of $p$ subcritical and critical.
This section is quite technical since we need to perform 
projections of nontrivial functions on different Nehari manifolds,
and compare in some sense the Nehari manifolds of the subcritical problem with the Nehari manifold of the critical one.
A ``local'' $(PS)$ condition is proved for the critical case.
Finally, we give also a Splitting Lemma involving the critical problem 
on the whole $\mathbb R^{N}$.

In Section \ref{bary}, the barycenter map {\sl \`a la Benci-Cerami} is introduced and some properties are proved.

In Section \ref{sec:finale} the proof of Theorem \ref{th:main}  is given by using the Ljusternick-Schnirelmann theory.
 
 In Section \ref{sec:Morse}, after recalling some basic notions in Morse theory
 and show that the second derivative of the functional is ``of type''
isomorphism minus a compact operator, we prove
 Theorem \ref{th:mainMorse}.

\subsection{Final comments}
 As a matter of notations, we will use the letters $C, C',\ldots, C_{1}, C_{2}, \ldots $ 
 to denote suitable positive constants
which do not depend on the functions neither on $p$. Moreover their values,
irrelevant for our purpose,  are allowed to change on every estimate.

The letter  $S$ will be deserved for the embedding constant of $H^{1}_{0}(\Omega)$ in $L^{2^{*}}(\Omega)$.

The symbol $o_{n}(1)$ stands for a vanishing sequence.

We will use sometimes the notation $|u|_{p}$ for the usual $L^{p}-$norm
of the function $u$: no confusion should arise for what concerns the underlying domain.

Other notations will be introduced whenever we need.

Finally, without no loss of generality, we assume throughout the paper that $0\in \Omega$.

\section{Variational framework }\label{sec:variational}

As observed in \cite{severo,severo2}, there are some technical
difficulties to apply directly variational methods to the formal
functional associated to \eqref{principal}, which formally should be given by
\[
J_{p}(u) = \frac{1}{2}\int_{\Omega}
(1+2|u|^2)|\nabla u |^2 -\frac{1}{p}
 \int_{\Omega} |u|^{p} .
\]
The main difficulty  related to 
${J}_{p}$ is that it is not  well defined in  the whole
$H_{0}^{1}(\Omega)$. For example, if
$u$ diverges near $0$ as $|x|^{(2-N)/4}$ and  then is glued to a smooth,
radial, and vanishing function,
we have $u\in H_{0}^{1}(B)$, while the function
$|u|^2|\nabla u |^2$ does not belong to $L^1(B)$. Here $B\subset\Omega$ is a ball containing the origin in $\mathbb R^{N}$.

To overcome this difficulty, we use the arguments developed in
 Colin-Jeanjean \cite{Jeanjean-Colin}. More precisely, we make
the change of variables $v= f^{-1}(u)$, where $f$ is defined by
\begin{equation}\label{mudanda de variavel}
\begin{array}{cllc}
f'(t)&=&\dfrac{1}{(1+ 2f(t)^{2})^{1/2}} & \;\;\mbox{ on }\;\;  [0,+\infty),\\
 f(t)&=&-f(-t) & \;\;\mbox{ on }\;\; (-\infty,0].
\end{array}
\end{equation}
Therefore, after the change of variables, the functional
$J_{p}$ can be rewritten in the following way
\begin{equation}\label{funcional}
I_{p}(v):= J_{p}(f(v))=
\frac{1}{2}\int_{\Omega} |\nabla v |^2  
-\frac{1}{p}
 \int_{\Omega} |f(v)|^{p}
\end{equation}
which is well defined on the  space $H_{0}^{1}(\Omega)$ endowed with the usual norm
$$\|v\|^{2} = \int_{\Omega} |\nabla v|^{2}. $$
A straightforward computation
shows that the functional \eqref{funcional}
is of class $C^1$ with
\[
I_{p}'(v)[w]=\int_{\Omega}\nabla v\nabla w -\int_{\Omega}|f(v)|^{p-2}f(v)f'(v)w 
\] for $v,w\in H_{0}^{1}(\Omega)$. Thus, the critical points of $I_{p}$
correspond exactly to the weak solutions of the semilinear problem
\begin{equation}\label{eq:equivalent}
\left\{
\begin{array}[c]{ll}
 -\Delta v =|f(v)|^{p-2}f(v)f'(v) & \text{in } \Omega \medskip \\
 v=0 & \text{on }  \partial{\Omega}.               
\end{array}
\right. 
\end{equation}
This problem has a close relation with problem
(\ref{principal}). In fact, if $v \in
H_{0}^{1}(\Omega)\cap
L^{\infty}_{loc}(\Omega)$ is a critical point of the
functional $I_{p}$, hence  a weak solution of  \eqref{eq:equivalent}, then $u=f(v)$ is a weak solution of
\eqref{principal}. By the same arguments used to prove Proposition 3.6 of \cite{AUG}, we have that each critical point $v$ of $I_{p}$ belongs to $H_{0}^{1}(\Omega)\cap
L^{\infty}(\Omega)$. 

Summing up we are reduced  to find
nontrivial critical points of $I_{p}$.
Actually, as we will see in Section \ref{sec:Morse} where the Morse theory is used,
 the functional is even $C^{2}$.

Now we show some results 
about the change of variable $f:\mathbb R\to \mathbb R$
that are essential in the next sections.

\begin{lemma}[see \cite{severo,severo2}]
\label{Lema f} The function $f$ and its derivative enjoy the
following properties:
\begin{enumerate}[label=(\roman*),ref=\roman*]
\item \label{Lemafi}  $f$ is uniquely defined, $C^2$ and invertible; \smallskip
  \item\label{Lemafii}  $|f'(t)|\leq 1$ for all $t\in \mathbb{R}$; \smallskip
  \item\label{Lemafiii}  $|f(t)|\leq |t|$ for all $t\in \mathbb{R}$;\smallskip
  \item\label{Lemafiv}  $f(t)/t\rightarrow 1$ as $t\rightarrow 0$; \smallskip
  \item\label{Lemafv}  $|f(t)|\leq 2^{1/4}|t|^{1/2}$ for all $t\in   \mathbb{R}$; \smallskip
  \item\label{Lemafvi}  $f(t)/2 < tf'(t) < f(t)$ for all $t > 0$, and the reverse inequalities hold for $t<0$;\smallskip
  \item\label{Lemafvii}  $f(t)/{\sqrt t}\rightarrow a>0$ as $t\rightarrow +\infty$; \smallskip
  \item\label{Lemafviii}  there exists a positive constant $C$ such that
\[
|f(t)| \geq
\begin{cases}
C|t|,\quad & |t| \leq 1 \\
C|t|^{1/2},\quad & |t|  \geq 1;
\end{cases}
\]
\item\label{Lemafix}  $|f(t)f'(t)|\leq 1/2^{1/2}$ for all $t\in \mathbb{R}$.
\end{enumerate}
\end{lemma}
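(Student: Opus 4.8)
The plan is to work directly from the ODE defining $f$ on $[0,+\infty)$, namely $f'(t) = (1+2f(t)^2)^{-1/2}$ with $f(0)=0$, and extend everything to $\mathbb R$ by the oddness relation $f(t)=-f(-t)$; it suffices to prove each statement for $t\geq 0$. For \eqref{Lemafi}, I would invoke the Picard--Lindelöf theorem: the right-hand side $g(s)=(1+2s^2)^{-1/2}$ is smooth and Lipschitz on bounded sets, so a unique $C^1$ local solution exists; since $0 < f'(t) \leq 1$, the solution is global and strictly increasing, hence invertible, and bootstrapping the ODE gives $f\in C^2$ (in fact $C^\infty$). Statement \eqref{Lemafii} is immediate from $(1+2f(t)^2)^{1/2}\geq 1$. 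For \eqref{Lemafiii}, since $f(0)=0$ and $f'\leq 1$, integrating gives $f(t)\leq t$ for $t\geq 0$. Statement \eqref{Lemafiv} follows from $f'(0)=1$ and L'Hôpital (or the definition of the derivative at $0$).

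For the square-root asymptotics I would first prove \eqref{Lemafvii}: rewrite the ODE as $\frac{d}{dt}\big[f(t)\sqrt{1+2f(t)^2}\big]$ or, more cleanly, note that $f'(t)(1+2f(t)^2)^{1/2}=1$, so integrating $f(t)f'(t)\big/\text{(something)}$ is awkward; instead observe that $\big(f(t)^2\big)' = 2f(t)f'(t) = 2f(t)(1+2f(t)^2)^{-1/2}$, and since $f(t)\to+\infty$ as $t\to\infty$ (it is increasing and cannot have a finite limit, else $f'$ would be bounded below by a positive constant forcing blow-up), for large $t$ this behaves like $\big(f^2\big)' \approx \sqrt 2\,f/|f| \cdot \sqrt{f} \cdots$ — more carefully, $\big(f(t)^2\big)' = 2f(t)/\sqrt{1+2f(t)^2} \to \sqrt 2$ as $t\to\infty$, so $f(t)^2/t \to \sqrt 2$, i.e.\ $f(t)/\sqrt t \to 2^{1/4}=:a>0$. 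Then \eqref{Lemafv} (the bound $|f(t)|\leq 2^{1/4}|t|^{1/2}$) should follow by showing the function $t\mapsto f(t)^2/t$ is monotone increasing to its limit $\sqrt 2$ — equivalently $\frac{d}{dt}\big(f(t)^2/t\big)\geq 0$, which reduces after differentiation to the inequality $2tf(t)f'(t)\geq f(t)^2$, i.e.\ $tf'(t)\geq f(t)/2$, which is exactly the left half of \eqref{Lemafvi}.

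So \eqref{Lemafvi} is the structural heart of the lemma. For $t>0$: the inequality $tf'(t) < f(t)$ is equivalent to $t < f(t)\sqrt{1+2f(t)^2}$; setting $h(t) = f(t)\sqrt{1+2f(t)^2} - t$ we have $h(0)=0$ and $h'(t) = f'(t)\sqrt{1+2f(t)^2} + f(t)\cdot\frac{2f(t)f'(t)}{\sqrt{1+2f(t)^2}} - 1 = 1 + \frac{2f(t)^2 f'(t)}{\sqrt{1+2f(t)^2}} - 1 > 0$, giving $h>0$ for $t>0$. For the left inequality $f(t)/2 < tf'(t)$, i.e.\ $f(t)\sqrt{1+2f(t)^2} < 2t$, set $k(t) = 2t - f(t)\sqrt{1+2f(t)^2}$; then $k(0)=0$ and $k'(t) = 2 - 1 - \frac{2f(t)^2}{\sqrt{1+2f(t)^2}\cdot\sqrt{1+2f(t)^2}}\cdot\sqrt{1+2f(t)^2}\big/\cdots$ — I would compute $k'(t) = 2 - \big[\,1 + \tfrac{2f(t)^2 f'(t)}{\sqrt{1+2f(t)^2}}\,\big]$ and check $\tfrac{2f(t)^2 f'(t)}{\sqrt{1+2f(t)^2}} = \tfrac{2f(t)^2}{1+2f(t)^2} < 1$, so $k'(t) > 0$, hence $k>0$ for $t>0$. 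Finally \eqref{Lemafviii} follows by combining \eqref{Lemafiv} (so $f(t)/t$ is bounded below near $0$, and being continuous and positive on $(0,1]$ it has a positive minimum, giving $|f(t)|\geq C|t|$ for $|t|\leq 1$) with \eqref{Lemafvii} (so $f(t)/\sqrt t$ is continuous and positive on $[1,\infty)$ with positive limit, hence bounded below, giving $|f(t)|\geq C|t|^{1/2}$ for $|t|\geq 1$); and \eqref{Lemafix} is just $|f(t)f'(t)| = |f(t)|/\sqrt{1+2f(t)^2} \leq |f(t)|/(\sqrt 2\,|f(t)|) = 1/\sqrt 2$ using $1+2f^2 \geq 2f^2$.

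The main obstacle is bookkeeping rather than depth: all the monotonicity claims in \eqref{Lemafvi} and the sharp constants in \eqref{Lemafv}, \eqref{Lemafix} reduce to checking that certain explicit auxiliary functions vanish at the origin and have a sign-definite derivative, so the work is purely in organizing these computations cleanly and in the right logical order (\eqref{Lemafvi} before \eqref{Lemafv}, \eqref{Lemafvii} before \eqref{Lemafviii}). Since this lemma is quoted from \cite{severo,severo2}, I would most likely present only the derivations of the properties genuinely needed later and refer to those papers for the rest.
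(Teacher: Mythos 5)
Your proposal is correct, and since the paper itself gives no proof of this lemma (it is quoted from \cite{severo,severo2}), your ODE-based derivation is the natural self-contained argument. All the individual computations check out: the auxiliary functions $h(t)=f(t)\sqrt{1+2f(t)^2}-t$ and $k(t)=2t-f(t)\sqrt{1+2f(t)^2}$ do have the claimed sign-definite derivatives, $|f(t)f'(t)|=|f(t)|/\sqrt{1+2f(t)^2}\le 1/\sqrt2$ is immediate, and you correctly identify the logical dependencies --- \eqref{Lemafvi} is needed to get the monotonicity of $f(t)^2/t$ and hence the sharp constant in \eqref{Lemafv}, and \eqref{Lemafiv}, \eqref{Lemafvii} together give \eqref{Lemafviii} by a compactness/limit argument at the two ends.
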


Particularly useful will be the  inequalities
\begin{equation}\label{eq:util}
f(t)^{2}/2\leq f'(t)f(t)t\leq f(t)^{2} \text{ for all }\ t\in \mathbb R.
\end{equation}
simply obtained by \eqref{Lemafvi} of  Lemma \ref{Lema f}.

We  deduce the following:

\begin{corollary}\label{monotonicidades}The following properties involving the function $f$  hold:
\begin{enumerate}[label=(\roman*),ref=\roman*]
    \item \label{monotonicidadei} The function ${f(t)f'(t)}{t^{-1}}$ is decreasing for
    $t>0$;\smallskip
    \item \label{monotonicidadeii} The function ${f(t)^{3}f'(t)}{t^{-1}}$ is increasing for
    $t>0$;\smallskip
    \item \label{monotonicidadeiii} For any $p>4$, the function $|f(t)|^{p-2}f(t)f'(t)t^{-1}$ is increasing for $t>0$.
   \end{enumerate}
\end{corollary}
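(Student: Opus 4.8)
The plan is to reduce everything to elementary calculus using the defining ODE $f'(t)=(1+2f(t)^2)^{-1/2}$ and the inequalities collected in Lemma \ref{Lema f}, especially \eqref{eq:util}. For item \eqref{monotonicidadei}, I would set $g(t)=f(t)f'(t)/t$ for $t>0$ and show $g'(t)<0$. Differentiating, $g'(t)=\big[(f'(t)^2+f(t)f''(t))t-f(t)f'(t)\big]/t^2$, so it suffices to prove $(f'(t)^2+f(t)f''(t))t<f(t)f'(t)$. From $f'(t)=(1+2f(t)^2)^{-1/2}$ one computes $f''(t)=-2f(t)f'(t)^4$, hence $f'(t)^2+f(t)f''(t)=f'(t)^2(1-2f(t)^2 f'(t)^2)$. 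Thus the claim becomes $t f'(t)(1-2f(t)^2f'(t)^2)<f(t)$; since $0<f'(t)\le 1$ and $1-2f(t)^2f'(t)^2\le 1$, the left side is at most $tf'(t)$, which is $<f(t)$ by \eqref{Lemafvi}. (One must check the bracket stays positive, i.e. $2f(t)^2f'(t)^2<1$; but $f(t)^2f'(t)^2=(f(t)f'(t))^2\le 1/2$ by \eqref{Lemafix}, with strict inequality for $t>0$, so this is fine.)

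For item \eqref{monotonicidadeii}, I would argue similarly with $h(t)=f(t)^3 f'(t)/t$, showing $h'(t)>0$, i.e. $\big(3f(t)^2 f'(t)^2+f(t)^3 f''(t)\big)t>f(t)^3 f'(t)$. Factoring $f(t)^2$ and using $f''(t)=-2f(t)f'(t)^4$, the inequality reduces to $t f'(t)\big(3-2f(t)^2 f'(t)^2\big)>f(t)$; here the bracket is at least $3-1=2$ (again by \eqref{Lemafix}), so the left side is at least $2tf'(t)$, and by \eqref{Lemafvi} we have $2tf'(t)>f(t)$. This gives the strict increase.

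Item \eqref{monotonicidadeiii} is then obtained by interpolation between the first two. Write $k_p(t)=|f(t)|^{p-2}f(t)f'(t)t^{-1}$ for $t>0$, and note $k_p(t)=\big(f(t)f'(t)t^{-1}\big)^{\alpha}\big(f(t)^3 f'(t) t^{-1}\big)^{\beta}$ where $\alpha,\beta\ge 0$ solve $\alpha+3\beta=p-1$ and $\alpha+\beta=1$; this gives $\beta=(p-2)/2\ge 1$ and $\alpha=(4-p)/2\le 0$ when $p>4$ — so a plain product-of-positive-powers argument does not work directly. Instead I would take logarithms: $\log k_p(t)=(p-2)\log f(t)+\log f'(t)-\log t+\log f(t)$... more cleanly, write $k_p(t)=\big(f(t)^3 f'(t)t^{-1}\big)\cdot f(t)^{p-4}$, a product of two positive increasing functions of $t>0$ (the first by item \eqref{monotonicidadeii}, the second since $f$ is increasing with $f(t)>0$ and $p-4>0$), hence increasing.

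The only genuinely delicate point is confirming the sign conditions on the brackets $1-2f(t)^2 f'(t)^2$ and $3-2f(t)^2 f'(t)^2$, which reduce to the uniform bound $|f(t)f'(t)|\le 1/\sqrt2$ of Lemma \ref{Lema f}\eqref{Lemafix}; everything else is a mechanical application of $f''=-2ff'^4$ together with \eqref{Lemafvi}. I expect no real obstacle beyond bookkeeping, and the factorization trick $k_p=(f^3f'/t)\cdot f^{p-4}$ in the last item is what keeps the argument short.
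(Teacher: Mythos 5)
Your proposal is correct and follows essentially the same route as the paper: differentiate using $f''=-2f(f')^{4}$, control the resulting brackets with $|f f'|\le 1/\sqrt2$ and item (vi) of Lemma \ref{Lema f}, and obtain (iii) from the factorization $|f|^{p-2}ff'/t=|f|^{p-4}\cdot f^{3}f'/t$ as a product of positive increasing functions. The only cosmetic difference is in (i), where the paper invokes the monotonicity of $f(t)/t$ while you expand the derivative directly; both arguments are equivalent.
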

\begin{proof} By using \eqref{Lemafvi} of Lemma \ref{Lema f}, it is easy to see that $f(t)/t$ is nonincreasing for $t>0$. Thus,
\[
\begin{aligned}
\frac{d}{dt}\left(\frac{f(t)f'(t)}{t}\right)=&\;
\frac{d}{dt}\left(\frac{f(t)}{t}\right)f'(t) +\frac{f(t)}{t}f''(t)<0
\end{aligned}
\]
for all $t>0$, which shows \eqref{monotonicidadei}.

\medskip

To prove \eqref{monotonicidadeii}, we observe that since 
$$f'(t)=\frac{1}{(1+2f^{2}(t))^{1/2}} \quad  \text{ and } \quad
f''(t)=\frac{-2f(t)f'(t)}{(1+2f(t)^{2})^{3/2}}=-2f(t)(f'(t))^{4},$$ we have
\begin{eqnarray*}
\frac{d}{dt}\left(\frac{f(t)^{3}f'(t)}{t}\right)& =&
\frac{3f(t)^{2}(f'(t))^2t
-2f(t)^{4}(f'(t))^{4}t-f(t)^{3}f'(t)}{t^{2}}\\
&\geq&f'(t)f(t)^{2}\frac{3f'(t)t-f'(t)t-f(t)}{t^{2}}\\
\end{eqnarray*}
and therefore, by \eqref{Lemafvi} and \eqref{Lemafix} of Lemma
\ref{Lema f}, we have for all $t>0$,
\[
\frac{d}{dt}\left(\frac{f(t)^{3}f'(t)}{t}\right) \geq f'(t)f(t)^{2}\frac{2f'(t)t-f(t)}{t^{2}}> 0,
\]
which proves \eqref{monotonicidadeii}.

\medskip

Finally  by using  \eqref{monotonicidadeii} and the equality
$$
\frac{|f(t)|^{p-2}f(t)f'(t)}{t}=|f(t)|^{p-4}\frac{f(t)^{3}f'(t)}{t}\,\,\, \mbox{for} \,\,\, t>0
$$
we obtain \eqref{monotonicidadeiii}.
\end{proof}

The next  properties  will be fundamental in the proof of the key Proposition
\ref{baricentri}.

\begin{lemma}\label{lem:outras}
The following hold true:
\begin{enumerate}[label=(\roman*),ref=\roman*]
\item\label{outrasiii} $f(t)f'(t)$ is increasing. In particular,
\begin{eqnarray*}
f(\lambda t)f'(\lambda t) \lambda t &\leq& \lambda f(t) f'(t) t, \quad \forall \lambda\in[0,1], \forall t\geq0 ; \smallskip \\
f(\lambda t)f'(\lambda t) \lambda t &\geq& \lambda f(t) f'(t) t, \quad \forall \lambda\geq1, \forall t\geq0. \smallskip
\end{eqnarray*}
\noindent Moreover for $\alpha\geq0$, we have \smallskip
\item\label{outrasiv} $f(\lambda t)^{\alpha}\geq \lambda^{\alpha} f(t)^{\alpha}$, for all $\lambda\in[0,1]$ and $t\geq0$;
\smallskip
\item\label{outrasv} $f(\lambda t)^{\alpha}\leq \lambda^{\alpha/2} f(t)^{\alpha}$, for all $\lambda\in [0,1]$ and $t\geq0$;
\smallskip
\item\label{outrasvi} $f(\lambda t)^{\alpha} \leq \lambda^{\alpha} f(t)^{\alpha}$, for all $\lambda\geq1$ and  $t\geq0$;
\smallskip
\item\label{outrasvii} $f(\lambda t)^{\alpha} \geq \lambda^{\alpha/2} f(t)^{\alpha}$, for all $\lambda\geq1$ and  $t\geq0$.
\end{enumerate}
\end{lemma}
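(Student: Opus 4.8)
The plan is to reduce everything to the single fact that $f(t)f'(t)$ is increasing on $[0,\infty)$, which itself follows from Corollary~\ref{monotonicidades}\eqref{monotonicidadeii}: since $f(t)^{3}f'(t)/t$ is increasing and, for fixed $t>0$, $(f(t)f'(t))' = (f'(t))^2 + f(t)f''(t) = (f'(t))^2 - 2f(t)^2(f'(t))^4 = (f'(t))^2\bigl(1 - 2f(t)^2(f'(t))^2\bigr)$, which is positive by Lemma~\ref{Lema f}\eqref{Lemafix} (indeed $|f(t)f'(t)|\le 1/\sqrt2$ gives $2f(t)^2(f'(t))^2 \le 1$). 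Alternatively one computes directly from \eqref{mudanda de variavel} that $f(t)f'(t) = f(t)/(1+2f(t)^2)^{1/2}$ and checks that $s\mapsto s/(1+2s^2)^{1/2}$ is increasing in $s\ge 0$, then composes with the increasing $f$. Once $f f'$ is increasing, the two displayed inequalities in \eqref{outrasiii} come from comparing the value of $f(s)f'(s)s$ at $s=\lambda t$ and $s = t$: for $\lambda\in[0,1]$, $f(\lambda t)f'(\lambda t)\le f(t)f'(t)$, so multiplying by $\lambda t\ge 0$ gives $f(\lambda t)f'(\lambda t)\lambda t\le \lambda f(t)f'(t)t$; for $\lambda\ge 1$ the inequality $f(\lambda t)f'(\lambda t)\ge f(t)f'(t)$ reverses it.

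For the four inequalities \eqref{outrasiv}--\eqref{outrasvii} it suffices, by raising to the power $\alpha\ge 0$ (monotone on $[0,\infty)$), to establish the $\alpha=1$ versions, namely
\begin{equation*}
\lambda f(t)\le f(\lambda t)\le \lambda^{1/2}f(t)\quad(\lambda\in[0,1]),\qquad
\lambda^{1/2}f(t)\le f(\lambda t)\le \lambda f(t)\quad(\lambda\ge 1),
\end{equation*}
for $t\ge 0$. Each of these is a statement about the monotonicity in $\lambda$ of a one-variable function: $\lambda\mapsto f(\lambda t)/\lambda$ and $\lambda\mapsto f(\lambda t)/\lambda^{1/2}$. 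I would show that $g(\lambda):=f(\lambda t)/\lambda$ is nonincreasing on $(0,\infty)$ — equivalently, for fixed $t>0$, that $f(s)/s$ is nonincreasing in $s$, which is exactly the fact recorded in the proof of Corollary~\ref{monotonicidades} (from Lemma~\ref{Lema f}\eqref{Lemafvi}). Hence for $\lambda\le 1$, $g(\lambda)\ge g(1) = f(t)$, giving $f(\lambda t)\ge\lambda f(t)$ (this is \eqref{outrasiv}); for $\lambda\ge 1$, $g(\lambda)\le g(1)$, giving $f(\lambda t)\le\lambda f(t)$ (this is \eqref{outrasvi}). Symmetrically, I claim $h(\lambda):=f(\lambda t)/\lambda^{1/2}$ is nondecreasing on $(0,\infty)$; equivalently, $f(s)^2/s$ is nondecreasing in $s$, i.e. $(f(s)^2/s)' = (2 f(s)f'(s)s - f(s)^2)/s^2 \ge 0$, which holds because $f(s)f'(s)s \ge f(s)^2/2$ by the left inequality in \eqref{eq:util}. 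Then for $\lambda\le 1$, $h(\lambda)\le h(1) = f(t)$, giving \eqref{outrasv}; for $\lambda\ge 1$, $h(\lambda)\ge h(1)$, giving \eqref{outrasvii}.

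The only mild subtlety — and the place to be careful rather than a genuine obstacle — is that these monotonicity facts for $f(s)/s$ and $f(s)^2/s$ are asserted only for $s>0$, so I must handle $\lambda t = 0$ (i.e. $t=0$ or $\lambda=0$) separately: there every claimed inequality reduces to $0\le 0$ since $f(0)=0$, using $\alpha\ge0$ (and interpreting $0^\alpha$ as $0$ when $\alpha>0$ and as $1$ when $\alpha=0$, in which case all inequalities read $1\le 1$). I expect the writing to be entirely routine once the two scalar monotonicity lemmas ($s\mapsto f(s)/s$ nonincreasing, $s\mapsto f(s)^2/s$ nondecreasing, on $s>0$) are isolated; both are immediate consequences of Lemma~\ref{Lema f}\eqref{Lemafvi} and \eqref{eq:util} already available in the excerpt, so there is no real difficulty, only bookkeeping over the ranges of $\lambda$ and the degenerate case.
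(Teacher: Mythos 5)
Your proof is correct and follows essentially the same route as the paper: item \eqref{outrasiii} via the sign of $\frac{d}{dt}\bigl(f(t)f'(t)\bigr)=(f'(t))^{2}\bigl(1-2f(t)^{2}(f'(t))^{2}\bigr)$ using \eqref{Lemafix} of Lemma \ref{Lema f}, and items \eqref{outrasiv}--\eqref{outrasvii} from the monotonicity of $f(\lambda t)^{\alpha}/\lambda^{\alpha}$ and $f(\lambda t)^{\alpha}/\lambda^{\alpha/2}$, both consequences of \eqref{Lemafvi} of Lemma \ref{Lema f}. Your reduction to the case $\alpha=1$ and the change of variable $s=\lambda t$ are only cosmetic differences from the paper's direct differentiation in $\lambda$, and your handling of the degenerate cases $\lambda t=0$ matches the paper's.
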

\begin{proof}
By  using that
$f''(t) = -2f(t) (f'(t))^{4}$ 
(see the proof of \eqref{monotonicidadeii} in Corollary \ref{monotonicidades})
and \eqref{Lemafix} of Lemma \ref{Lema f}, we find that
$$\frac{d}{dt}\Big(f(t)f'(t) \Big)= (f'(t))^{2} -2f(t)^{2} (f'(t))^{4} =(f'(t))^{2} \Big( 1-2f(t)^{2} (f'(t))^{2}\Big)\geq0 $$
proving \eqref{outrasiii}.

\medskip

Of course if  $\lambda=0$ or $t=0$, \eqref{outrasiv} and \eqref{outrasvi} 
are  satisfied.
Now for $t>0$ fixed we have,
in virtue of  \eqref{Lemafvi} of Lemma \ref{Lema f},
$$
\frac{d}{d\lambda} \Big(\frac{f(\lambda t)^{\alpha}} {\lambda^{\alpha}}\Big)=
\frac{\alpha f(\lambda t)^{\alpha-1} \lambda^{\alpha-1} \Big(f'(\lambda t) \lambda t -f(\lambda t)\Big) }{\lambda^{2\alpha}} \leq0
$$
 and hence
$f(\lambda t)^{\alpha} /\lambda^{\alpha}$
is a non-increasing function. This gives   \eqref{outrasiv} and \eqref{outrasvi}.

\medskip

The proof of
\eqref{outrasv} and \eqref{outrasvii} follow in a similar way, by computing the
derivative with respect to $\lambda$ of $f(\lambda t)^{\alpha}/ \lambda^{\alpha/2}$.
\end{proof}

\section{The Nehari manifolds and compactness results}
\label{sec:Nehari}

In this section we study the Nehari manifolds which appear in relation to our problem.
In particular we need to consider, beside problem \eqref{principal}
also some limit cases with the associated  Nehari manifolds.

Associated to the functional \eqref{funcional}, that is,
$$I_{p}(v)=\frac12\int_{\Omega}|\nabla v|^{2}
-\frac{1}{p}\int_{\Omega} |f(v)|^{p},
$$
we have the set, usually called
the {\sl  Nehari manifold} associated to \eqref{principal},
\begin{equation*}
\label{Nehari}{\mathcal{N}}_{p}=\left\{ v\in H^{1}_{0}(\Omega)\setminus\{0\}:
G_{p}(v)=0\right\}
\end{equation*}
where
$$
G_{p}(v):=I_{p}^{\prime}(v)[v]=\|v\|^{2} -\int_{\Omega}|f(v)|^{p-2}f(v)f'(v)v.
$$
In particular all the critical points of $I_{p}$  lie in $\mathcal N_{p}$.
In the next Lemma we show the basic properties of $\mathcal N_{p}$. 
We present also the proof of some of its properties 
since we were not able to find them in the literature.
\begin{lemma}\label{lemmanehari}
For all $p\in(4,22^{*}]$, we have:
\begin{enumerate}[label=(\roman*),ref=\roman*]
\item\label{lemmaneharii} $\mathcal{N}_{p}$ is a $C^{1}$ manifold; \smallskip
\item\label{lemmanehariii} there exists $c_{p}>0$ such that $\|v\|\geq c_p$ for every $v\in \mathcal{N}_{p}$; \smallskip
\item\label{lemmanehariiii} it holds $\inf_{u\in\mathcal{N}_{p}}I_{p}(u)>0$;  \smallskip

\item\label{lemmanehariiv} for every $v\neq0$ there exists a unique $t_{p}=t_{p}(v)>0$ such that
$t_{p}v\in\mathcal{N}_{p}$; \smallskip

\item\label{lemmanehariv} $\mathcal N_{p}$ is homeomorphic to the unit sphere 
$\mathbb S = \{v\in H^{1}_{0}(\Omega): \|v\|=1\}$;\smallskip

\item\label{lemmaneharivi} the following equalities are true
\[
\inf_{v\in\mathcal{N}_{p}}I_{p}(v) =\inf_{v\neq0}\max_{t>0}I_{p}(tv)=\inf_{g\in\Gamma_{p}} \, \max
_{t\in[0,1]} I_{p}(g(t)),
\]
where
$$
\Gamma_{p}=\left\{g\in C([0,1];H^{1}_{0}(\Omega)) : g(0)=0, I_{p}(g(1))\le0,
g(1)\neq0\right\}.
$$
\end{enumerate}
\end{lemma}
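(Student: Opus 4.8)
The plan is to prove the six items of Lemma \ref{lemmanehari} essentially in the stated order, exploiting heavily the monotonicity in Corollary \ref{monotonicidades}\eqref{monotonicidadeiii}, which says that $t\mapsto |f(t)|^{p-2}f(t)f'(t)t^{-1}$ is increasing on $(0,\infty)$ for $p>4$; this single fact drives almost everything. For \textbf{(i)} I would show that $0$ is a regular value of $G_p$ on $\mathcal N_p$: since $G_p(v)=\|v\|^2-\int_\Omega|f(v)|^{p-2}f(v)f'(v)v$, a direct computation gives, for $v\in\mathcal N_p$,
\[
G_p'(v)[v]=2\|v\|^2-\int_\Omega\frac{d}{ds}\Big(|f(s)|^{p-2}f(s)f'(s)s\Big)\Big|_{s=v}v,
\]
and using $G_p(v)=0$ to replace $\|v\|^2$ together with the pointwise inequality coming from monotonicity — namely $\frac{d}{ds}\big(|f(s)|^{p-2}f(s)f'(s)s\big)\cdot s > |f(s)|^{p-2}f(s)f'(s)s$ for $s\neq0$ (equivalent to the derivative of $|f(s)|^{p-2}f(s)f'(s)s^{-1}$ being positive) — one obtains $G_p'(v)[v]<0$, so $G_p'(v)\neq0$ and $\mathcal N_p$ is a $C^1$ manifold. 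In fact this strict inequality will be reused repeatedly, so I would isolate it first.

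Next, \textbf{(ii)}: for $v\in\mathcal N_p$ write $\|v\|^2=\int_\Omega|f(v)|^{p-2}f(v)f'(v)v$; using $|f(v)f'(v)|\le 2^{-1/2}$ (Lemma \ref{Lema f}\eqref{Lemafix}) and $|f(v)|\le|v|$ (item \eqref{Lemafiii}) one bounds the right side by $C\int_\Omega|v|^{p-1}\le C\|v\|^{p-1}$ via Sobolev (here one needs $p-1\le 2^*$, which holds since $p\le 22^*<2^*+1$ fails in general — so more carefully split $|f(v)|^{p-2}\le|v|^{p-2}$ and pair with $|f(v)f'(v)v|\le|v|^{\text{something}}$, using item \eqref{Lemafv} $|f(t)|\le 2^{1/4}|t|^{1/2}$ to keep the total power $\le 2^*$ when $p$ is near $22^*$); this yields $\|v\|^2\le C\|v\|^{p-1}$ hence $\|v\|\ge c_p>0$ since $p-1>2$. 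Then \textbf{(iii)} follows from \textbf{(ii)}: on $\mathcal N_p$ we have $I_p(v)=I_p(v)-\frac1p G_p(v)=\big(\tfrac12-\tfrac1p\big)\|v\|^2+\tfrac1p\int_\Omega\big(|f(v)|^{p-2}f(v)f'(v)v-|f(v)|^p\big)$, and by \eqref{eq:util} the integrand is $\ge -\tfrac12 f(v)^2\cdot(\text{bounded})$... — actually cleaner: use $|f(v)|^{p-2}f(v)f'(v)v\ge \tfrac12|f(v)|^p$ from \eqref{eq:util}, giving $I_p(v)\ge(\tfrac12-\tfrac1p)\|v\|^2 - \tfrac1{2p}\int|f(v)|^p$; combine with $\|v\|^2=\int|f(v)|^{p-2}f(v)f'(v)v\ge\tfrac12\int|f(v)|^p$ to get $I_p(v)\ge c\|v\|^2\ge c\,c_p^2>0$.

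For \textbf{(iv)}, fix $v\neq0$ and set $\varphi(t)=\|v\|^2-\int_\Omega |f(tv)|^{p-2}f(tv)f'(tv)(tv)\,t^{-1}$ — equivalently study $h(t):=\frac1{t}I_p'(tv)[tv]=\|v\|^2 - \int_\Omega \frac{|f(tv)|^{p-2}f(tv)f'(tv)tv}{(tv)}\,v^2$ where the fraction is pointwise increasing in $t$ by Corollary \ref{monotonicidades}\eqref{monotonicidadeiii}; so the subtracted integral is strictly increasing in $t$, equals $0$ at $t=0^+$ (it is $o(t)$ by item \eqref{Lemafiv}, since near $0$, $|f(tv)|^{p-2}f(tv)f'(tv)\sim (tv)^{p-1}$ and $p>4$) and tends to $+\infty$ as $t\to\infty$ (using item \eqref{Lemafviii}, $|f(tv)|\ge C|tv|^{1/2}$ for large argument, so the integrand grows like $t^{(p-2)/2}\cdot t^{1/2}\cdot t^{1/2}\cdot t^{-1}=t^{(p-4)/2}\to\infty$). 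Hence there is a unique $t_p=t_p(v)>0$ with $t_pv\in\mathcal N_p$; the same monotonicity shows $I_p(tv)$ is increasing then decreasing, so $t_p$ is the strict max, giving the first equality in \textbf{(vi)}. Standard arguments (implicit function theorem applied to $G_p(t v)=0$, using $G_p'$ nonvanishing from \textbf{(i)}) show $v\mapsto t_p(v)$ is continuous, yielding the homeomorphism $v\mapsto t_p(v)v$ from $\mathbb S$ onto $\mathcal N_p$ in \textbf{(v)}. Finally the mountain-pass characterization in \textbf{(vi)}: the inequality $\inf_{g\in\Gamma_p}\max_{[0,1]}I_p(g)\le \inf_{v\neq0}\max_{t>0}I_p(tv)$ is got by the test paths $g(t)=tT v$ with $T$ large enough that $I_p(Tv)<0$ (possible since $I_p(tv)\to-\infty$, again by item \eqref{Lemafviii}); the reverse inequality $\ge\inf_{\mathcal N_p}I_p$ follows because every $g\in\Gamma_p$ must cross $\mathcal N_p$ (as $I_p(g(1))\le 0$ while $g$ starts at $0$, and near $0$ one has $I_p>0$ on small spheres by \textbf{(iii)}-type estimates, so an intermediate-value/topological-degree argument forces $G_p(g(t_0))=0$ for some $t_0$).

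The main obstacle I anticipate is the bookkeeping of Sobolev exponents in \textbf{(ii)} (and in the $t\to\infty$ limits in \textbf{(iv)}, \textbf{(vi)}): because $f$ behaves like $t$ near $0$ but like $\sqrt t$ at infinity, one must split each integral into the regions $\{|v|\le 1\}$ and $\{|v|\ge 1\}$ and apply the two-sided bounds \eqref{Lemafiii}, \eqref{Lemafv}, \eqref{Lemafviii} of Lemma \ref{Lema f} so that the effective power never exceeds $2^*$ — this is exactly where $p<22^*$ is used and must be tracked to ensure the constants, though $p$-dependent, stay finite. Everything else is a routine adaptation of the classical Nehari-manifold machinery, with Corollary \ref{monotonicidades}\eqref{monotonicidadeiii} playing the role that $p$-homogeneity plays in the semilinear case.
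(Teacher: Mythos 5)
Your overall strategy is the paper's: the Nehari machinery driven by the monotonicity of $t\mapsto|f(t)|^{p-2}f(t)f'(t)t^{-1}$ from \eqref{monotonicidadeiii} of Corollary \ref{monotonicidades}, together with the two-sided growth bounds on $f$ from Lemma \ref{Lema f}; items (iii)--(vi) are handled essentially as in the paper (which likewise dismisses (v) and (vi) as standard). Two computational points need repair, and the first is a real obstruction as written.

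In (i), the pointwise inequality you display, $\frac{d}{ds}\bigl(|f(s)|^{p-2}f(s)f'(s)s\bigr)\cdot s>|f(s)|^{p-2}f(s)f'(s)s$, is not equivalent to the positivity of the derivative of $|f(s)|^{p-2}f(s)f'(s)s^{-1}$ (it is equivalent to $|f|^{p-2}ff'$ being increasing), and, more importantly, it is too weak. Writing $\Phi(s)=|f(s)|^{p-2}f(s)f'(s)s$ and substituting $\|v\|^{2}=\int_\Omega\Phi(v)$, your inequality only yields $G_p'(v)[v]<2\|v\|^{2}-\int_\Omega\Phi(v)=\|v\|^{2}$, which has the wrong sign. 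What you need is $\Phi'(s)s>2\Phi(s)$, and this is precisely what \eqref{monotonicidadeiii} of Corollary \ref{monotonicidades} gives, since it is the statement that $\Phi(s)/s^{2}$ is increasing; with the factor $2$ the substitution gives $G_p'(v)[v]<2\|v\|^{2}-2\int_\Omega\Phi(v)=0$ as required. With that correction your argument recovers the paper's conclusion $G_p'(v)[v]\le-\frac{p-4}{4}\int_\Omega|f(v)|^{p}<0$, which the paper obtains by writing out the same estimates by hand using $f''=-2f(f')^{4}$.

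In (ii), no region-splitting is needed and the exponent $p-1$ in your draft is wrong: combine \eqref{eq:util} with \eqref{Lemafv} of Lemma \ref{Lema f} globally to get $|f(v)|^{p-2}f(v)f'(v)v\le|f(v)|^{p}\le2^{p/4}|v|^{p/2}$, and since $p/2\in(2,2^{*}]$, H\"older and Sobolev give $\|v\|^{2}\le C\|v\|^{p/2}$, hence $\|v\|\ge c_p>0$ because $p/2>2$. This is the paper's one-line argument and removes the ``bookkeeping obstacle'' you anticipate; the same bound $|f(tv)|^{p}\le 2^{p/4}t^{p/2}|v|^{p/2}$ also handles the limits in (iv) and (vi) without splitting.
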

\begin{proof} 
Since
\begin{multline*}
G'_{p}(v)[v]= 2 \displaystyle\int_{\Omega}|\nabla v|^{2}
- \displaystyle\int_{\Omega}(p-1)|f(v)|^{p-2}(f'(v))^{2}v^{2}- \\
\displaystyle\int_{\Omega}|f(v)|^{p-2}f(v)f''(v)v^{2}- \displaystyle\int_{\Omega}|f(v)|^{p-2}f(v)f'(v)v
\end{multline*}
and  $G_{p}(v)=0$ if $v\in \mathcal N_{p}$, we obtain
$$
	G'_{p}(v)[v]= -\displaystyle\int_{\Omega}(p-1)|f(v)|^{p-2}(f'(v))^{2}v^{2}- 
	\displaystyle\int_{\Omega}|f(v)|^{p-2}f(v)f''(v)v^{2}+ \displaystyle\int_{\Omega}|f(v)|^{p-2}f(v)f'(v)v.
$$
Using that $f''(t)=-2f(t)(f'(t))^{4}$  and inequalities in 
 \eqref{Lemafix} and \eqref{Lemafvi} of Lemma \ref{Lema f}, we get
\begin{eqnarray*}\label{decidir}
G'_{p}(v)[v]&=&  \displaystyle\int_{\Omega}|f(v)|^{p-2}f(v)f'(v)v
-(p-1)\int_{\Omega}|f(v)|^{p-2}(f'(v))^{2} v^{2}
 +2\displaystyle\int_{\Omega}|f(v)|^{p}(f'(v))^{4}v^{2}\\
 &=& - \int_{\Omega}|f(v)|^{p-2}f'(v)v \Big[ (p-1)f'(v)v -2f(v)^2(f'(v))^{3}v -f(v) \Big]  \nonumber\\
 &\leq &- \int_{\Omega}|f(v)|^{p-2}f'(v)v \Big[ (p-1)f'(v)v -f'(v)v -f(v) \Big]  \nonumber\\
 &=& - \int_{\Omega}|f(v)|^{p-2}f'(v)v \Big[ (p-2)f'(v)v  -f(v) \Big]  \nonumber\\
 &\leq& - \int_{\Omega}|f(v)|^{p-2}f'(v)v \Big[\frac{p-2}{2}f(v)-f(v)\Big]  \nonumber \\
 &=&- \frac{p-4}{2}\int_{\Omega}|f(v)|^{p-2}f(v)f'(v)v.  \nonumber
\end{eqnarray*}
Finally by using \eqref{eq:util}
we  arrive at
$$
G_{p}'(v)[v] \leq - \frac{p-4}{2}\int_{\Omega} |f(v)|^{p-2}f(v)f'(v)v \leq - \frac{p-4}{4}\int_{\Omega}|f(v)|^{p}<0
$$
which proves \eqref{lemmaneharii}.
\medskip

Let $v\in \mathcal N_{p}$. Then, by using successively \eqref{eq:util} and \eqref{Lemafv} of Lemma
\ref{Lema f}, we get
\begin{eqnarray*}
\| v\|^{2} &= &\int_{\Omega} |f(v)|^{p-2} f(v)f'(v)v \\
&\leq& 2^{p/4}\int_{\Omega}|v|^{p/2} \\
& \leq& 2^{p/4}  |\Omega|^{\frac{2 2^{*} - p}{22^{*}}}|v|_{2^{*}}^{p/2} \\
&\leq & 2^{p/4} S^{p/2} |\Omega|^{\frac{2 2^{*} - p}{22^{*}}}\|v\|^{p/2}
\end{eqnarray*}
and hence we infer 
$$\|v\|\geq \Big( \frac{1}{2^{p/4} S^{p/2} |\Omega|^{\frac{2 2^{*} - p}{22^{*}}}}\Big)^{2/(p-4)}=:c_{p}>0$$
which shows \eqref{lemmanehariii}.

\medskip

 On $\mathcal{N}_{p}$  the functional is positive since, by using \eqref{eq:util}
 we have
\begin{eqnarray*}
I_{p}(v) &=& \frac{1}{2}\int_{\Omega}|f(v)|^{p-2}f(v)f'(v)v - \frac{1}{p}\int_{\Omega}|f(v)|^{p} \nonumber\\
&\geq &\frac{p-4}{4p} \int_{\Omega}|f(v)|^{p} \nonumber\\
&\geq&0. 
\end{eqnarray*}
Moreover, for every $v\in {\mathcal N_{p}}$ 
by \eqref{eq:util} 
and \eqref{Lemafv}
of Lemma \ref{Lema f},
we have
$$\int_{\Omega}|\nabla v|^{2} = \int_{\Omega} |f(v)|^{p -2} f(v) f'(v)v
 \leq \int_{\Omega}|f(v)|^{p} \leq 2^{p/4}\int_{\Omega}|v|^{p/2} \leq C \Big( \int_{\Omega} |\nabla v|^{2}\Big)^{p/2}$$
and then 
\begin{equation}\label{eq:contra}
\int_{\Omega}|\nabla v|^{2} \geq C'>0.
\end{equation}
Then, if  it were $\inf_{v\in\mathcal{N}_{p}}I_{p}(v)=0$ there would exist  $\{v_{n}\} \subset {\mathcal N_{p}}$ 
such that, by using again \eqref{eq:util},
\begin{eqnarray}\label{eq:nonvanishing}
o_{n}(1) &=& {I_{p}}(v_{n})  \\
& = & {I_{p}}(v_{n}) -\frac{2}{p} {I_{p}'}(v_{n})[v_{n}] \nonumber\\
& = &  \frac{p-2}{2p} \int_{ \Omega}|\nabla v_{n}|^{2} 
+ \frac{2}{p}\int_{\Omega} |f(v_{n})|^{p - 2}f(v_{n})f'(v_{n})v_{n}
 -\frac{1}{p}\int_{\Omega}|f(v_{n})|^{p} \nonumber\\
&\geq&   \frac{p-4}{2p} \int_{\Omega}|\nabla v_{n}|^{2}  \nonumber
\end{eqnarray}
which contradicts \eqref{eq:contra} and concludes the proof of \eqref{lemmanehariiii}.

\medskip

Let $v\neq 0$ and, for $t\geq0$ define the map
$$g(t): = I_{p}(tv)= \frac{t^{2}}{2}\int_{\Omega} |\nabla v|^{2} -\frac{1}{p}\int_{\Omega} |f(tv)|^{p}.$$ 
It is easy to see that $g(0) = 0$ and $g(t) = I_{p}(tv)<0$ for suitably large $t$, by 
\eqref{Lemafviii} of Lemma \ref{Lema f}.

Clearly $g'(t)=I'_{p}(tv)[v]=0$ if and only if $tv\in \mathcal N_{p}$.
Moreover, $g'(t)=0$ means
$$\int_{\Omega}|\nabla v|^{2} = \frac{1}{t} \int_{\Omega}| f(tv)|^{p-2}f(tv) f'(tv)v = \int_{\{x\in \Omega : v(x)\neq0\}}
\frac{f(t|v|) ^{p-1} f'(t|v|) |v|^{2}}{t|v|}
$$
and the right hand side is an increasing function in $t$.
Since by \eqref{Lemafv} of Lemma \ref{Lema f}, 
$$\lim_{t\to 0^{+}} \int_{\Omega} t^{-2}|f(tv)|^{p}\leq 2^{p/4}\lim_{t\to0^{+}}  \int_{\Omega} t^{(p-4)/2}|v|^{p/2}=0,$$
we easily see that  $g(t)>0$ for suitably small $t>0.$
Then there is a unique $t_{p}=t_{p}(v)>0$
such that $g'(t_{p}) = 0$ and $g(t_{p}) = \max_{t>0} g(t)$, i.e. $t_{p} v\in \mathcal N_{p}$, proving \eqref{lemmanehariiv}.

\medskip

The proof of \eqref{lemmanehariv} and \eqref{lemmaneharivi} follows by standard arguments.
\end{proof}

\begin{remark}\label{rem:limbaixo}
Actually in \eqref{lemmanehariii} of Lemma \ref{lemmanehari} the constant $c_{p}$ can be made independent on $p$
far away from $4$.
Indeed it is easily seen that it is possible to take a small $\eta>0$ such that
$$\xi:=\min_{p\in[4+\eta,22^{*}]}c_{p}>0.$$ 
In other words, all the Nehari manifolds $\mathcal N_{p}$ are bounded away from zero,
independently on $p\in[4+\eta,22^{*}]$, i.e.  there exists $\xi>0$ such that
$$\forall p\in [4+\eta,22^{*}], v\in \mathcal N_{p} \Longrightarrow \|v\|\geq \xi.$$

In the remaining part of the paper, the symbol $\eta$ will be deserved for the small positive
constant given above.
\end{remark}

\medskip

The Nehari manifold well-behaves with respect to the $(PS)$ sequences.
Again, since at this stage no compactness condition is involved, we can even state the 
result for $p\in (4,22^{*}]$.
\begin{lemma}\label{lemmaPS}
Let $p\in (4,22^{*}]$ be fixed and $\{v_{n}\}\subset\mathcal{N}_{p}$ be a $(PS)$ sequence
for $I_{p}|_{\mathcal{N}_{p}}$. Then $\{v_{n}\}$ is a $(PS)$
sequence for the free functional $I_{p}$ on $H^{1}_{0}(\Omega)$.
\end{lemma}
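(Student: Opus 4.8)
The plan is to run the classical Lagrange multiplier argument for constrained Palais--Smale sequences; the only point that requires genuine work is showing that the multipliers tend to zero along the sequence. Note first that no compactness is involved, which is why the statement can be given for the whole range $p\in(4,22^{*}]$.

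I would start by proving that $\{v_{n}\}$ is bounded in $H^{1}_{0}(\Omega)$, using only $v_{n}\in\mathcal N_{p}$ and $I_{p}(v_{n})\to a$. Since $I_{p}'(v_{n})[v_{n}]=G_{p}(v_{n})=0$, the algebraic manipulation already performed in \eqref{eq:nonvanishing} gives
\[
I_{p}(v_{n})=I_{p}(v_{n})-\frac{2}{p}I_{p}'(v_{n})[v_{n}]\ \geq\ \frac{p-4}{2p}\,\|v_{n}\|^{2},
\]
so $\|v_{n}\|^{2}\leq\frac{2p}{p-4}I_{p}(v_{n})$ stays bounded. Next, since $\mathcal N_{p}$ is a $C^{1}$ manifold (Lemma~\ref{lemmanehari}\eqref{lemmaneharii}) defined by $G_{p}=0$, the hypothesis $(I_{p}|_{\mathcal N_{p}})'(v_{n})\to0$ produces, by the Lagrange multiplier rule, reals $\lambda_{n}$ with
\[
I_{p}'(v_{n})=\lambda_{n}\,G_{p}'(v_{n})+o_{n}(1)\qquad\text{in }H^{-1}(\Omega).
\]
Testing this identity on $v_{n}$, using that $\{\|v_{n}\|\}$ is bounded and $I_{p}'(v_{n})[v_{n}]=0$, one gets $\lambda_{n}\,G_{p}'(v_{n})[v_{n}]=o_{n}(1)$.

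Now comes the key point. I would invoke the estimate established inside the proof of Lemma~\ref{lemmanehari}\eqref{lemmaneharii}, namely $G_{p}'(v_{n})[v_{n}]\leq-\frac{p-4}{4}\int_{\Omega}|f(v_{n})|^{p}$, and combine it with the fact that, since $v_{n}\in\mathcal N_{p}$, inequality \eqref{eq:util} yields
\[
\int_{\Omega}|f(v_{n})|^{p}\ \geq\ \int_{\Omega}|f(v_{n})|^{p-2}f(v_{n})f'(v_{n})v_{n}=\|v_{n}\|^{2}\ \geq\ c_{p}^{2}>0
\]
by Lemma~\ref{lemmanehari}\eqref{lemmanehariii}. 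Hence $G_{p}'(v_{n})[v_{n}]\leq-\frac{p-4}{4}c_{p}^{2}<0$ uniformly in $n$, and therefore $\lambda_{n}\to0$. This uniform negativity of $G_{p}'(v_{n})[v_{n}]$ is the heart of the matter; everything else is bookkeeping.

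It remains to see that $\{G_{p}'(v_{n})\}$ is bounded in $H^{-1}(\Omega)$, so that $\lambda_{n}G_{p}'(v_{n})\to0$ there. This follows from the boundedness of $\{v_{n}\}$ and the growth controls of Lemma~\ref{Lema f}: from $f(t)^{2}(f'(t))^{2}\leq1/2$ one gets $|f(t)|^{p-2}(f'(t))^{2}\leq\frac12|f(t)|^{p-4}$, and then \eqref{Lemafv} of Lemma~\ref{Lema f} bounds each nonlinear term appearing in $G_{p}'(v)$ pointwise by $C\bigl(1+|v|^{(p-2)/2}\bigr)$; since $(p-2)/2\leq2^{*}-1$ for $p\leq22^{*}$, the corresponding Nemytskii operators map bounded subsets of $L^{2^{*}}(\Omega)$ into bounded subsets of $L^{(2^{*})'}(\Omega)$, whence $\sup_{n}\|G_{p}'(v_{n})\|_{H^{-1}}<\infty$. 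Combining this with $\lambda_{n}\to0$ gives $I_{p}'(v_{n})=\lambda_{n}G_{p}'(v_{n})+o_{n}(1)\to0$ in $H^{-1}(\Omega)$, which together with $I_{p}(v_{n})\to a$ shows that $\{v_{n}\}$ is a $(PS)_{a}$ sequence for the free functional $I_{p}$ on $H^{1}_{0}(\Omega)$, as claimed.
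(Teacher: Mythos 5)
Your proof is correct, and it is the standard Lagrange-multiplier argument that the paper states this lemma with but does not write out: boundedness of the sequence via $I_{p}-\tfrac{2}{p}I_{p}'[\,\cdot\,]$, the uniform bound $G_{p}'(v_{n})[v_{n}]\leq-\tfrac{p-4}{4}c_{p}^{2}<0$ coming from the proof of Lemma~\ref{lemmanehari}\eqref{lemmaneharii} together with \eqref{lemmanehariii}, hence $\lambda_{n}\to0$, and the $H^{-1}$-boundedness of $G_{p}'(v_{n})$ from the growth bound $C(1+|v|^{(p-2)/2})$ with $(p-2)/2\leq 2^{*}-1$, which is exactly borderline admissible at $p=22^{*}$. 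All the estimates you quote check out, so nothing further is needed.
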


Now for $p\in(4,22^{*})$ it is known that the free functional $I_{ p}$ 
satisfies the $(PS)$ condition on $H^{1}_{0}(\Omega)$ and also when restricted to
$\mathcal N_{p}$, see e.g.
\cite[Lemma 3.2 and Proposition 3.3]{CGU}.
In addition to the properties listed in Corollary \ref{lemmanehari},
the manifold ${\mathcal{N}}_{p}$ is a natural constraint for $I_{ p}$ in the
sense that any $u\in\mathcal{N}_{p}$ critical point of $I_{p}|_{\mathcal{N}%
_{p}}$ is also a critical point for the free functional $I_{p}$ (see, for instance,  \cite[Corollary 3.4]{CGU}). Hence the
(constraint) critical points we find are solutions of our problem since no
Lagrange multipliers appear.

In particular,
  as a consequence of the $(PS)$ condition we have
\begin{equation*}\label{mp}
\forall\,p\in(4,22^{*})\,:\ \ \mathfrak m_{ p}:=\min_{v\in \mathcal{N}_{ p}} I_{p}(v)=I_{ p}(\mathfrak g_{ p})>0\,,
\end{equation*}
i.e. $\mathfrak m_{ p}$ is achieved on a function, hereafter denoted with $\mathfrak g_{ p}$. 
Since $\mathfrak g_{ p}$ minimizes the energy
$I_p$, it will be called a \emph{ground state}.
Observe that $\mathfrak g_{p}\geq0$ and are indeed positive by the maximum principle.

\begin{remark}\label{rem:nuovo}
We note that if $\{w_{p}\}_{p\in[4+\eta,22^{*}]}\subset H^{1}_{0}(\Omega)$ is such that for all 
$p\in[4+\eta,22^{*}], w_{p}\in \mathcal N_{p}$,
then 
\begin{eqnarray*}
0<\xi \leq\|w_{p}\|^{2} &=& \int_{\Omega}|f(w_{p})|^{p-2}f(w_{p}) f'(w_{p}) w_{p} \\
&\leq & \int_{\Omega} |f(w_{p})|^{p} \\ 
&\leq&  2^{1/4} \int_{\Omega}|w_{p}|^{p/2}\\
&\leq& C|w_{p}|_{2^{*}}^{p/2}
\end{eqnarray*}
where $C$ can be choosen independent on $p$.  We deduce that 
the sequences
 $\{|w_{p}|_{p}\}_{p\in[4+\eta,22^{*}]} , \{|f(w_{p})|_{p}\}_{p\in [4+\eta,22^{*}]}$ and 
 $\{|w_{p}|_{2^{*}}\}_{p\in [4+\eta,22^{*}]}$
are bounded away from zero.

In particular,
this  is true
 for the family of ground states $\{\mathfrak g_{p}\}_{p\in[4+\eta,22^{*})}$. This last fact will be used in 
 the next sections and in particular in Proposition \ref{prop:importante}.
\end{remark}


We address now two limit cases related to our equation.
They involve the critical problems both in the domain $\Omega$
and in the whole space $\mathbb R^{N}$.

\subsection{Behavior of the family of ground state levels $\{\mathfrak m_{p}\}_{p\in (4,22^{*})}$}
We introduce the critical problem in the domain $\Omega$.
This  is done in order   to evaluate
the limit of the ground state levels  $\{\mathfrak m_p\}_{p\in(4,22^*)}$
when $p\rightarrow22^*$. The main theorem in this subsection is Theorem \ref{prop:limitem*},
which requires  first some preliminary work.

\medskip

Let us introduce the $C^{1}$ functional associated to $p=22^{*}$,
\[
I_{*}(v):=I_{22^{*}}(v)=\frac{1}{2}\int_{\Omega} |\nabla v|^{2}
-\frac{1}{22^{*}}\int_{\Omega}|f(v)|^{22^{*}}, \qquad v\in H^{1}_{0}(\Omega)
\]
whose critical points are the solutions of
\begin{equation}
\left\{
\begin{array}[c]{ll}
\label{star} -\Delta v =|f(v)|^{22^{*}-2}f(v)f'(v) & \quad\text{ in } \Omega\\
v=0 & \quad\text{ on } \partial\Omega.
\end{array}
\right.
\end{equation}
It is known that the lack of compactness of the embedding of $H^{1}_{0}(\Omega)$
in $L^{2^{*}}(\Omega)$ implies that 
$I_{*}$ does not satisfies the $(PS)$ condition at every level.
This is due to the invariance with respect to the conformal scaling 
$$u(\cdot)\longmapsto v_{R}(\cdot):=R^{N/2^{*}}v(R(\cdot)) \ \ \ \ (R>1)$$
which leaves invariant the $L^2-$norm of the gradient as well as the $L^{2^*}-$norm, i.e.
$|\nabla v_{R}|_{2}^{2}=|\nabla v|_{2}^{2}$ and $|v_{R}|_{2^{*}}^{2^{*}}=|v|_{2^{*}}^{2^{*}}\,.$

Related to the critical problem we have the following:

\begin{lemma}\label{lem:nonex}
	If $\Omega$ is a star-shaped domain
	then there exists only the trivial solution to \eqref{star}.
\end{lemma}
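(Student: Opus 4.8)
The plan is to prove the nonexistence result via a Pohozaev-type identity adapted to the equation \eqref{star}. First I would assume $v$ is a nontrivial smooth solution (elliptic regularity applied to the equation, noting the right-hand side $|f(v)|^{22^*-2}f(v)f'(v)$ is controlled thanks to the boundedness properties of $f,f'$ in Lemma \ref{Lema f}, so bootstrapping works). Then I would multiply the equation by $x\cdot\nabla v$ and integrate over $\Omega$. The left-hand side $-\int_\Omega \Delta v\,(x\cdot\nabla v)$ produces, after integration by parts, the standard terms $\frac{N-2}{2}\int_\Omega|\nabla v|^2 - \frac12\int_{\partial\Omega}|\partial_\nu v|^2\,(x\cdot\nu)$; since $\Omega$ is star-shaped we may assume $x\cdot\nu\ge0$ on $\partial\Omega$ (with respect to the star center, which we take to be the origin as assumed in the paper).

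The key step is the right-hand side. Writing $h(v):=|f(v)|^{22^*-2}f(v)f'(v)$, notice that $h$ is exactly the derivative of the primitive $H(v):=\frac{1}{22^*}|f(v)|^{22^*}$, since $\frac{d}{dv}\big(\frac{1}{22^*}|f(v)|^{22^*}\big)=|f(v)|^{22^*-2}f(v)f'(v)$. Hence $\int_\Omega h(v)\,(x\cdot\nabla v)=\int_\Omega x\cdot\nabla\big(H(v)\big) = -N\int_\Omega H(v)$ by integration by parts (the boundary term vanishes because $v=0$ on $\partial\Omega$, so $f(v)=0$ there). Combining with the identity obtained from testing the equation with $v$ itself, namely $\int_\Omega|\nabla v|^2 = \int_\Omega h(v)v = \int_\Omega |f(v)|^{22^*-2}f(v)f'(v)v$, we get two linear relations among $\int_\Omega|\nabla v|^2$, $\int_\Omega H(v)$ and $\int_\Omega h(v)v$. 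The main obstacle, and the place where the special structure of $22^*$ enters, is that unlike the semilinear case $h(v)v$ is \emph{not} a constant multiple of $H(v)$; here one must use the pointwise inequalities \eqref{eq:util}, i.e. $f(v)^2/2\le f'(v)f(v)v\le f(v)^2$, which give $\frac{22^*}{2}H(v)\le \frac{1}{|f(v)|^{22^*-2}}\cdot\frac{|f(v)|^{22^*}}{2}\cdot\text{(stuff)}$—more precisely $\frac{N-2}{2}H(v)\,22^*/2 \le \tfrac12 h(v)v \le \frac{N-2}{2}\cdot\frac{22^*}{2}H(v)$ type bounds, so that the Pohozaev balance forces $\int_\Omega|\nabla v|^2 \le (\text{something})\int_\Omega|\nabla v|^2$ with the constant strictly less than $1$ unless $v\equiv0$.

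Concretely, I expect the computation to go as follows: the Pohozaev identity reads
\[
\frac{N-2}{2}\int_\Omega|\nabla v|^2 - \frac12\int_{\partial\Omega}|\partial_\nu v|^2\,(x\cdot\nu) = -N\int_\Omega H(v) + (\text{correction from }h(v)v\neq \text{const}\cdot H(v)),
\]
and one rearranges using $\int_\Omega|\nabla v|^2=\int_\Omega h(v)v$ together with $h(v)v\ge \frac{22^*}{2}H(v)$ (from the left inequality in \eqref{eq:util}, since $h(v)v=|f(v)|^{22^*-2}f'(v)f(v)v\ge \frac12|f(v)|^{22^*}= \frac{22^*}{2}H(v)$) to bound the positive boundary integral by a manifestly nonpositive quantity. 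Since $x\cdot\nu\ge0$ and the only way the chain of inequalities can be consistent is $\int_\Omega|\nabla v|^2=0$, we conclude $v\equiv0$, contradicting nontriviality. The main technical care needed is justifying the integrations by parts (regularity of $v$ up to the boundary, which follows as in the argument cited from \cite{AUG} for $L^\infty$ bounds plus standard Schauder/$W^{2,q}$ theory) and keeping track of the exact constant $22^*=4N/(N-2)$ so that the critical exponent is precisely the threshold where the inequality degenerates.
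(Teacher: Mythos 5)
Your proposal is correct and follows essentially the same route as the paper: elliptic regularity, the Pohozaev identity with primitive $H(v)=\frac{1}{22^*}|f(v)|^{22^*}$, the identity $\int_\Omega|\nabla v|^2=\int_\Omega h(v)v$ from testing with $v$, and the pointwise bound $f(v)f'(v)v\ge f(v)^2/2$ from \eqref{eq:util} combined with the coincidence $\frac{N}{22^*}=\frac{N-2}{4}$ to force the boundary term to be nonpositive. The only slip is a sign in your displayed identity, which should read $\frac{N-2}{2}\int_\Omega|\nabla v|^2+\frac12\int_{\partial\Omega}|\partial_\nu v|^2\,(x\cdot\nu)=N\int_\Omega H(v)$; the argument you then describe in words is the correct one.
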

\begin{proof}
Let $v\in H^{1}_0(\Omega)$ be a solution to \eqref{star}. Setting $h(s)=|f(s)|^{22^{*}-2}f(s)f'(s)$,  we have $H(v)\in L^1(\Omega)$ where $H(s)=\frac{1}{22^*}|f(s)|^{22^*}$. Moreover, according to Br\'ezis-Kato theorem and by elliptic regularity theory, it is easily seen that $u\in C^2(\Omega)\cap C^1(\overline{\Omega})$. Thus, by using
the  Pohozaev identity (see e.g.  \cite[Theorem B.1]{Willem}) we obtain
$$
\frac{1}{2}\int_{\partial\Omega}|\nabla v|^2\sigma.\nu d\sigma+\frac{N-2}{2}\int_{\Omega}|\nabla v|^2=\frac{N}{22^*}\int_{\Omega}|f(v)|^{22^*}
$$
where $\nu$ denotes the unit outward normal to $\partial\Omega$. Since $v$ is a solution, one also has
$$
\int_{\Omega}|\nabla v|^2=\int_{\Omega}|f(v)|^{22^*-2}f(v)f'(v)v
$$
Now, combining the last two equalities we reach
$$
\frac{1}{2}\int_{\partial\Omega}|\nabla v|^2\sigma.\nu d\sigma=\frac{N}{22^*}\int_{\Omega}|f(v)|^{22^*}-\frac{N-2}{2}\int_{\Omega}|f(v)|^{22^*-2}f(v)f'(v)v
$$
Using that $f(v)f'(v)v\geq f^2(v)/2$ it follows that $\int_{\partial\Omega}|\nabla v|^2\sigma.\nu d\sigma\leq 0$ and we must have $v=0$ provided that $\sigma.\nu>0$ on $\partial\Omega$. 
\end{proof}

Let
$$
\mathcal{N}_{*}=\left\{v\in H^{1}_{0}(\Omega)\setminus \{0\}:  G_{*}(v):=I'_{*}(v)[v]=0\right\}\,, 
$$
be the Nehari manifold 
associated to the critical problem \eqref{star}. By Lemma \ref{lemmanehari}
it results
\begin{equation}\label{eq:m*}
m_{*}:=\inf_{v\in\mathcal{N}_{*}} I_{*}(v)> 0. 
\end{equation}
In contrast to the case $p\in(4,22^{*})$, now $m_{*}$ is not achieved. 

The value $m_*$ turns out to be an upper bound for the sequence of ground states levels $\{\mathfrak m_p\}_{p\in(4,22^*)}$,
as we will prove below. First we need a lemma.

\begin{lemma}\label{le:general}
Let $w\in H^{1}_{0}(\Omega)\setminus\{0\}$ be fixed. For every
$p\in (4,22^{*})$ let $t_{p}=t_{p}(w)>0$ given in  \eqref{lemmanehariiv} of Lemma \ref{lemmanehari}, i.e. such that $t_{p} w\in \mathcal N_{p}$.
Then 
$$\lim_{p\to 22^{*}} t_{p}(w)=\overline t>0 \quad \text{ and } \quad \overline t w\in \mathcal N_{*}.$$
Moreover   if $w\in \mathcal N_{*}$ then $\lim_{p\to 22^{*}} t_{p}(w) = 1$.
\end{lemma}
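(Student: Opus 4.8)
The plan is to analyze the scalar function $t\mapsto I_p(tw)$ via the characterization in \eqref{lemmanehariiv} of Lemma \ref{lemmanehari}, and pass to the limit in the equation defining $t_p$. Recall that $t_p = t_p(w)$ is the unique positive number for which
\[
\|w\|^{2} = \frac{1}{t_p}\int_{\Omega}|f(t_p w)|^{p-2}f(t_p w)f'(t_p w)\,w
= \int_{\Omega}\frac{|f(t_p|w|)|^{p-2}f(t_p|w|)f'(t_p|w|)}{t_p|w|}\,w^{2}.
\]
First I would show that the family $\{t_p\}_{p\to 22^*}$ stays in a compact subset of $(0,+\infty)$. For the upper bound, one uses \eqref{eq:util} and \eqref{Lemafviii} of Lemma \ref{Lema f}: on $\mathcal N_p$ one has $\|t_p w\|^2 = \int_\Omega |f(t_p w)|^{p-2}f(t_p w)f'(t_p w)(t_p w) \le \int_\Omega |f(t_p w)|^p$, and the lower growth bound $|f(s)|\ge C|s|^{1/2}$ for large $|s|$ forces $t_p$ to be bounded above (if $t_p\to\infty$ along a subsequence, the right-hand side grows like $t_p^{p/2}$ on a set of positive measure while the left-hand side grows only like $t_p^2$, and $p/2>2$). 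For the lower bound, the fact that $t_p w\in\mathcal N_p$ together with Remark \ref{rem:limbaixo} (or a direct computation with \eqref{Lemafv}: $\|t_p w\|^2 \le 2^{p/4}\int_\Omega |t_p w|^{p/2}$ gives $t_p^{2-p/2}\|w\|^2 \le 2^{p/4}|w|_{p/2}^{p/2}$, hence $t_p$ bounded below since $2-p/2<0$) keeps $t_p$ away from $0$, uniformly for $p$ in a left-neighborhood of $22^*$.

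Next, take any sequence $p_n\to 22^*$ and extract a subsequence with $t_{p_n}\to\bar t\in(0,+\infty)$. I would then pass to the limit in the identity $G_{p_n}(t_{p_n}w)=0$, i.e. in
\[
\|t_{p_n} w\|^{2} = \int_{\Omega}|f(t_{p_n} w)|^{p_n-2}f(t_{p_n} w)f'(t_{p_n} w)(t_{p_n} w).
\]
The left side clearly tends to $\|\bar t\,w\|^2 = \bar t^2\|w\|^2$. For the right side, pointwise convergence of the integrand to $|f(\bar t w)|^{22^*-2}f(\bar t w)f'(\bar t w)(\bar t w)$ is immediate from continuity of $f,f'$ and $t_{p_n}\to\bar t$; to justify the interchange of limit and integral I would dominate: by \eqref{eq:util} and \eqref{Lemafv}, $|f(s)|^{p-2}f(s)f'(s)s \le |f(s)|^p \le 2^{p/4}|s|^{p/2}$, and since $|w|\in L^{2^*}(\Omega)$ and $p_n/2 \le 22^*/2 = 2^*$ with $t_{p_n}$ bounded, the integrands are dominated (uniformly in $n$) by a fixed $L^1$ function (using $|\Omega|<\infty$ and $|s|^{p_n/2}\le |s|^{2^*}+1$). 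Dominated convergence then gives
\[
\bar t^{2}\|w\|^{2} = \int_{\Omega}|f(\bar t w)|^{22^{*}-2}f(\bar t w)f'(\bar t w)(\bar t w) = G_*(\bar t w) + \bar t^2\|w\|^2,
\]
that is $G_*(\bar t w)=0$, so $\bar t w\in\mathcal N_*$. Since $\bar t>0$, by the uniqueness statement \eqref{lemmanehariiv} of Lemma \ref{lemmanehari} applied to $I_*$ (valid for $p=22^*$, as Lemma \ref{lemmanehari} covers $p\in(4,22^*]$), the value $\bar t$ is the \emph{unique} positive number with $\bar t w\in\mathcal N_*$; in particular it does not depend on the chosen subsequence, so the full limit $\lim_{p\to22^*}t_p(w)=\bar t$ exists. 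The last assertion is then immediate: if $w\in\mathcal N_*$ then $1\cdot w\in\mathcal N_*$, and by the same uniqueness $\bar t=1$, so $\lim_{p\to22^*}t_p(w)=1$.

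\textbf{Main obstacle.} The only delicate point is the uniform domination needed for the limit of the nonlinear term: one must be careful that the exponents $p_n/2$ approach but stay below $2^* = 22^*/2$, so that a single $L^1$ majorant works for all $n$ — this is where $22^*$ being exactly twice $2^*$ is used, together with $|\Omega|<\infty$. Everything else (the two-sided bounds on $t_p$, the monotonicity giving uniqueness) is routine given Lemma \ref{Lema f}, \eqref{eq:util}, Lemma \ref{lemmanehari} and Remark \ref{rem:limbaixo}.
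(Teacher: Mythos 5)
Your overall strategy is the same as the paper's (two‑sided bounds on $t_p$, then dominated convergence in the identity $G_{p}(t_pw)=0$, then uniqueness of the projection onto the Nehari manifold), but the step establishing the upper bound on $t_p$ does not work as written. You start from
$$t_p^2\|w\|^{2}=\int_{\Omega}|f(t_p w)|^{p-2}f(t_p w)f'(t_p w)(t_p w)\le \int_{\Omega}|f(t_p w)|^{p}$$
and then argue that the right‑hand side grows like $t_p^{p/2}$ while the left‑hand side grows like $t_p^{2}$, with $p/2>2$. But in an inequality of the form $\mathrm{LHS}\le\mathrm{RHS}$, the right side growing \emph{faster} than the left produces no contradiction whatsoever; you would need the reverse inequality. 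The fix is to use the other half of \eqref{eq:util}, namely $f(s)f'(s)s\ge f(s)^{2}/2$, which gives
$$t_p^{2}\|w\|^{2}\ \ge\ \frac12\int_{\Omega}|f(t_p w)|^{p},$$
and only then invoke a lower growth bound on $f$ — either \eqref{Lemafviii} of Lemma \ref{Lema f} on the set where $t_p|w|\ge 1$, or, as the paper does, \eqref{outrasvii} of Lemma \ref{lem:outras} after writing $f(t_p|w|)^{p}=\big(f(t_p|w|)/\sqrt{t_p|w|}\big)^{p}(t_p|w|)^{p/2}$ — to conclude $\|w\|^{2}\ge C\,t_p^{(p-4)/2}\int_{\Omega}|w|^{p/2}$, which does blow up if $t_p\to+\infty$ and yields the contradiction. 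With this correction the step is exactly the paper's.

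The rest of your argument is correct and coincides with the paper's proof: the lower bound on $t_p$ via \eqref{Lemafv}, the pointwise convergence plus the explicit $L^{1}$ majorant $C(|w|^{2^*}+1)$ (the paper only says ``Dominated Convergence Theorem'', so your justification is more complete), and the conclusion $\bar t\,w\in\mathcal N_{*}$. Your handling of the two remaining points is in fact cleaner than the paper's: you dispose of the subsequence issue and of the case $w\in\mathcal N_{*}$ in one stroke by appealing to the uniqueness in \eqref{lemmanehariiv} of Lemma \ref{lemmanehari} at $p=22^{*}$, whereas the paper tacitly assumes convergence of the whole family and reproves the last assertion by hand using the monotonicity of $|f(t)|^{22^{*}-2}f(t)f'(t)t^{-1}$ from \eqref{monotonicidadeiii} of Corollary \ref{monotonicidades} — which is, of course, the same monotonicity underlying the uniqueness you invoke.
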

\begin{proof}
By definition
\begin{equation}\label{eq:generalNp}
t_{p}^{2}\int_{\Omega}|\nabla w|^{2}= \int_{\Omega}|f(t_{p}w)|^{p-2}f(t_{p}w) f'(t_{p}w)t_{p}w
\end{equation}
and then, by   \eqref{eq:util} and \eqref{Lemafv} of Lemma \ref{Lema f} we have
\begin{eqnarray*}
t_{p}^{2}\int_{\Omega}|\nabla w|^{2}
\leq \int_{\Omega}|f(t_{p}w)|^{p } \leq 2^{p/4}t_{p}^{p/2}\int_{\Omega}|w|^{p/2}.
\end{eqnarray*}
Then $$t_{p}^{(p-4)/2}\geq \frac{\|w\|^{2} }{ 2^{p/4}\displaystyle\int_{\Omega}|w|^{p/2}}$$
from which it follows $$\liminf_{p\to 2 2^{*}} t_{p} \geq \Big( \frac{ \|w\|^{2} } { 2^{2^{*}/2}|w|_{2^{*}}^{2^{*}}}   
\Big)^{1/(2^{*}-2)}>0.$$

Assume now that $t_{p}\to+\infty$ as $p\to 22^{*}$.
Using again \eqref{eq:util}, by \eqref{eq:generalNp} we infer
  $t_{p}^{2}\|w\|^{2} \geq \frac12 \int_{\Omega} |f(t_{p}w)|^{p}$ and then
\begin{eqnarray*}
\|w\|^{2}&  \geq& \frac{1}{2} \int_{\{x\in\Omega: w(x)\neq0 \}} \frac{f(t_{p}|w|)^{p}  }{t_{p}^{2} |w|^{2}} |w|^{2} \\
&= & \frac1 2 \int_{\{x\in\Omega: w(x)\neq0 \}}  \frac{ f(t_{p} |w|) ^{p } \ (t_{p} |w|)^{p/2}}{(t_{p} |w|)^{p/2} \ t_{p}^{2}|w|
^{2}} |w|^{2} \\
& = &  \frac{1}{2} \int_{\{x\in\Omega: w(x)\neq0 \}}  \Big(\frac{ f(t_{p}|w|) }{  \sqrt{t_{p}|w|}}\Big)^{p} t_{p} ^{(p-4)/2}|w|^{p/2}\\
&\geq& \frac{1}{2}C  t_{p}^{(p-4)/2}\int_{\Omega} |w|^{p/2} \\
&\to& +\infty\quad\text{ as } p\to22^{*},
\end{eqnarray*}
where in the last inequality we have used 
item \eqref{outrasvii} of Lemma \ref{lem:outras}. This contradiction implies that 
 $\{t_{p}\}_{p\in (4,22^{*})}$ has to be bounded.
Then we can assume $\lim_{p\to 22^{*}} t_{p} = \overline t >0$
and passing to the limit in \eqref{eq:generalNp}, by the Dominated Convergence Theorem we get
\begin{equation}\label{eq:tw}
\overline t^{2} \|w\|^{2}=\int_{\Omega}|f(\overline t w)|^{22^{*}-2}f(\overline t w)f'(\overline t w) \overline t w
\end{equation}
i.e. $\overline t w\in \mathcal N_{*}$, proving the first part of the Lemma.

\medskip

In the case $w\in \mathcal N_{*}$, by definition
\begin{equation*}\label{eq:wcritico}
\|w\|^{2} = \int_{\Omega}|f(w)|^{22^{*}-2} f(w)f'(w) w,
\end{equation*}
which, joint with \eqref{eq:tw} gives
$$\int_{\{x\in \Omega: w(x)\neq0 \}} \frac{|f(\overline tw)|^{22^{*}-2}f(\overline t w)f'(\overline t w)}{\overline tw } w^{2}
=
\int_{\{x\in \Omega: w(x)\neq0 \}} \frac{|f( w)|^{22^{*}-2}f(  w)f'(  w)}{ w } w^{2}.
$$
The conclusion now follows since, if $w(x)\neq0$, by item \eqref{monotonicidadeiii} of Corollary \ref{monotonicidades} the map $ f(t)^{22^{*}-1} f(t)f'(t)t^{-1}$ is increasing for $t>0$ .
\end{proof}

\begin{proposition}\label{limitate}
We have
$$\limsup_{p\rightarrow22^{*}} \mathfrak m_{p}\le m_{*}.$$
\end{proposition}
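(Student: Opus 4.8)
The plan is to exploit the fact that, although $m_*$ is \emph{not} attained, it can be approximated as closely as we wish by functions in $\mathcal{N}_*$ which, being a fixed function once chosen, can be projected onto every subcritical Nehari manifold $\mathcal{N}_p$. First I would fix $\varepsilon>0$ and pick $w\in\mathcal{N}_*$ with $I_*(w)\le m_*+\varepsilon$; this is possible by the definition \eqref{eq:m*} of $m_*$ as an infimum. For $p\in(4,22^*)$, let $t_p=t_p(w)>0$ be the unique number given by \eqref{lemmanehariiv} of Lemma \ref{lemmanehari}, so that $t_pw\in\mathcal{N}_p$. Then by the very definition of $\mathfrak m_p$ as the minimum on $\mathcal{N}_p$,
\[
\mathfrak m_p\le I_p(t_pw)=\frac{t_p^{2}}{2}\int_\Omega|\nabla w|^{2}-\frac{1}{p}\int_\Omega|f(t_pw)|^{p}.
\]

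The second step is to pass to the limit $p\to22^*$ in the right-hand side. By Lemma \ref{le:general}, since $w\in\mathcal{N}_*$ we have $t_p\to1$; in particular $t_p$ stays in a bounded interval, say $t_p\le M$ for $p$ close to $22^*$. The first term converges to $\tfrac12\int_\Omega|\nabla w|^2$ trivially. For the second term I would use that $t_pw\to w$ pointwise a.e., that $f$ is continuous, and that $|f(t_pw)|^p\le 2^{p/4}|t_pw|^{p/2}\le C|w|^{p/2}\,M^{p/2}$ by \eqref{Lemafv} of Lemma \ref{Lema f}; since $w\in L^{2^*}(\Omega)$ and $p/2<2^*$, the family $\{|w|^{p/2}\}$ is dominated by an $L^1$ function uniformly for $p$ near $22^*$ (e.g. by $|w|^{2^*}+1\in L^1$), so the Dominated Convergence Theorem gives $\int_\Omega|f(t_pw)|^p\to\int_\Omega|f(w)|^{22^*}$. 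Hence
\[
\limsup_{p\to22^*}\mathfrak m_p\le\frac{1}{2}\int_\Omega|\nabla w|^{2}-\frac{1}{22^*}\int_\Omega|f(w)|^{22^*}=I_*(w)\le m_*+\varepsilon.
\]
Letting $\varepsilon\to0$ yields the claim.

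The main obstacle — and the only delicate point — is the interchange of limit and integral in the nonlinear term: one must make sure the domination is \emph{uniform in $p$}, which is exactly why the elementary bound $|f(s)|^p\le 2^{p/4}|s|^{p/2}$ together with the boundedness of $t_p$ (Lemma \ref{le:general}) is the right tool, converting the integrand into $C|w|^{p/2}$ with $p/2$ bounded strictly below $2^*$; a crude bound like $\int|f(t_pw)|^{22^*}$ would not immediately dominate $\int|f(t_pw)|^p$ on a general domain without inserting a factor $|\Omega|^{(22^*-p)/(22^*)}$, which is harmless and could be used as an alternative. Everything else is routine. Note that this proposition, combined with the reverse inequality (the nonexistence Lemma \ref{lem:nonex} plus a lower-bound argument), will give the full statement $\lim_{p\to22^*}\mathfrak m_p=m_*$ of Theorem \ref{prop:limitem*}; here we only need the $\limsup$ bound.
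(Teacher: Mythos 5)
Your proposal is correct and follows essentially the same route as the paper: fix $\varepsilon>0$, take a near-minimizer $\overline v\in\mathcal N_*$, project it onto $\mathcal N_p$ via the unique $t_p(\overline v)$, invoke Lemma \ref{le:general} to get $t_p\to 1$, and pass to the limit in $I_p(t_p\overline v)$. Your explicit justification of the dominated convergence step (domination by $C(|w|^{2^*}+1)\in L^1(\Omega)$, uniform in $p$ near $22^*$) is a detail the paper leaves implicit, but the argument is the same.
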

\begin{proof}
Fix $\varepsilon>0.$ By definition of $m_{*}$ there exists $\overline v\in\mathcal{N}_{*}$ such that
\begin{equation*}
\label{e/2}I_{*}(\overline v)=\frac{1}{2}\| \overline v\|^{2}-\frac{1}{22^{*}}\int_{\Omega}|f(\overline v)|^{22^{*}}<m_{*}+\varepsilon.
\end{equation*}
For every $p\in (4,22^{*})$ there exists a unique $t_{p}=t_{p}(\overline v)>0$ such that $t_{p}\overline v\in \mathcal N_{p}$
and by Lemma \ref{le:general} we know that $\lim_{p\to22^{*}}t_{p}= 1$.
Then
$$\mathfrak  m_{p}\leq I_{p}(t_{p} \overline v) = \frac{t_{p}^{2}}{2}\|\overline v\|^{2} - 
\frac{t_{p}^{p}}{p} \int_{\Omega}|f(t_{p}\overline v)|^{p}$$
and so
$\limsup_{p\to 22^{*}}\mathfrak  m_{p} \leq I_{*}(\overline v)<m_{*} +\varepsilon$
concluding the proof.
\end{proof}

In particular we deduce the following:
\begin{corollary}\label{cor:nozero}
The family of minimizers $\{\mathfrak  g_{p}\}_{p\in(4,22^{*})}$ is bounded in $H^{1}_{0}(\Omega)$.
\end{corollary}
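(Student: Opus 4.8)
The plan is to combine Proposition \ref{limitate} with the coercivity of $I_{p}$ on the Nehari manifold, uniformly in $p$ bounded away from $4$. First I would fix the constant $\eta>0$ from Remark \ref{rem:limbaixo} and restrict attention to $p\in[4+\eta,22^{*})$; since we only care about the behaviour as $p\to 22^{*}$, the values $p\in(4,4+\eta)$ contribute only finitely many functions $\mathfrak g_{p}$ (actually a whole family, but a harmless one) and in any case are irrelevant for boundedness of the tail of the family. By Proposition \ref{limitate} there is a constant $M>0$ and a threshold $p_{0}\in(4+\eta,22^{*})$ such that $\mathfrak m_{p}=I_{p}(\mathfrak g_{p})\le M$ for all $p\in[p_{0},22^{*})$.

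Next I would exploit the algebraic identity already used in the proof of \eqref{lemmanehariiii} of Lemma \ref{lemmanehari}: for $v\in\mathcal N_{p}$,
\[
I_{p}(v)=I_{p}(v)-\frac{2}{p}I_{p}'(v)[v]
=\frac{p-2}{2p}\int_{\Omega}|\nabla v|^{2}
+\frac{2}{p}\int_{\Omega}|f(v)|^{p-2}f(v)f'(v)v-\frac{1}{p}\int_{\Omega}|f(v)|^{p},
\]
and then, by the left inequality in \eqref{eq:util}, the last two terms combine to something nonnegative, giving
\[
I_{p}(v)\ge\frac{p-4}{2p}\int_{\Omega}|\nabla v|^{2}=\frac{p-4}{2p}\|v\|^{2}.
\]
Applying this with $v=\mathfrak g_{p}$ yields $\|\mathfrak g_{p}\|^{2}\le\frac{2p}{p-4}\,\mathfrak m_{p}\le\frac{2p}{p-4}M$ for $p\in[p_{0},22^{*})$. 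Since $p\mapsto\frac{2p}{p-4}$ is bounded on $[p_{0},22^{*}]$ (it is continuous and the interval is compact, and $p_{0}>4$), we conclude $\sup_{p\in[p_{0},22^{*})}\|\mathfrak g_{p}\|<\infty$. Together with the fact that there are only finitely many "early" parameters to worry about — or more precisely, that on the compact parameter interval $[4+\eta,p_{0}]$ one argues identically, replacing $M$ by $\sup_{p\in[4+\eta,p_{0}]}\mathfrak m_{p}$, which is finite because $p\mapsto\mathfrak m_{p}$ is (upper semicontinuous and) locally bounded — this gives boundedness of the whole family $\{\mathfrak g_{p}\}_{p\in(4,22^{*})}$ in $H^{1}_{0}(\Omega)$.

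The only mildly delicate point is making sure the energy bound $\mathfrak m_{p}\le M$ holds on a full neighbourhood of $22^{*}$ and not merely along a sequence; but this is exactly what $\limsup_{p\to22^{*}}\mathfrak m_{p}\le m_{*}$ in Proposition \ref{limitate} delivers. Everything else is the elementary coercivity estimate above, which is uniform in $p$ precisely because the coefficient $(p-4)/(2p)$ stays bounded below away from $4$. I do not expect any real obstacle here; the statement is essentially a corollary of the two facts already in hand.
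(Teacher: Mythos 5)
Your proposal is correct and follows essentially the same route as the paper: the identity $\mathfrak m_{p}=I_{p}(\mathfrak g_{p})-\tfrac{2}{p}I_{p}'(\mathfrak g_{p})[\mathfrak g_{p}]\ge\tfrac{p-4}{2p}\|\mathfrak g_{p}\|^{2}$ combined with the upper bound on $\mathfrak m_{p}$ from Proposition \ref{limitate}. Your extra discussion of the range $p$ near $4$ (where the coefficient $(p-4)/(2p)$ degenerates) is a sensible clarification that the paper's terser proof leaves implicit, but the argument is the same.
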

\begin{proof}
Since as in  \eqref{eq:nonvanishing}
\begin{eqnarray*}
\mathfrak  m_{p}= I_{p}(\mathfrak g_{p}) - \frac{2}{p}I_{p}'(\mathfrak g_{p})[\mathfrak g_{p}]
\geq \frac{p-4}{2p}\|\mathfrak g_{p}\|^{2} ,
\end{eqnarray*}
 the conclusion follows from Proposition \ref{limitate}.
\end{proof}

It will be useful the next:
\begin{remark}\label{rem:general}
Corollary \ref{cor:nozero} can be generalized to arbitrary functions in the Nehari manifolds $\mathcal N_{p}$,
not necessary the ground states, as long as the functionals converge.

In other words, let $p_{n}\to 22^{*}$ as $n\to +\infty$. If $\{w_{n} \}\subset H^{1}_{0}(\Omega)$ is such that $w_{n}\in \mathcal N_{p_{n}}$ for every $n$, and
$I_{p_{n}}(w_{n}) \to l\in (0,+\infty)$ as $n\to\infty$, then $\{w_{n}\}$ is bounded
in $H^{1}_{0}(\Omega)$.

Indeed, similarly to the proof of Corollary \ref{cor:nozero}, this easily follows from
$$l=I_{p_{n}}(w_{n}) - \frac{2}{p_{n}}I_{p_{n}}'(w_{n})[w_{n}] + o_{n}(1)
\geq \frac{p_{n}-4}{2p_{n}}\|w_{n}\|^{2} +o_{n}(1).
$$
\end{remark}

We need now a technical lemma about the ``projections'' of the minimizers $\mathfrak g_{p}$
on the Nehari manifold of the critical problem $\mathcal N_{*}$.
Let us first observe the following which generalizes Lemma \ref{le:general}.

\begin{remark}\label{rem:tp*}
If $\{w_{p}\}_{p\in(4,22^{*})}\subset H^{1}_{0}(\Omega)$ is such that
\begin{itemize}
\item[(a)] for every $p\in (4,22^{*}): w_{p}\in \mathcal N_{p}$, \smallskip
\item[(b)] there exist $C_{1}, C_{2}>0$ such that for every $p\in (4,22^{*}): 0< C_{1}\leq |w_{p}|_{2^{*}}$ and $\|w_{p}\| \leq C_{2}$,
\end{itemize}
then setting $t_{*}(w_{p}) := t_{22^{*}}(w_{p})>0$ such that $t_{*}(w_{p})w_{p}\in \mathcal N_{*}$
(see \eqref{lemmanehariiv} of Lemma \ref{lemmanehari}), it holds
\begin{equation}\label{eq:infsup}
0< \liminf_{p\to 22^{*}} t_{*}(w_{p})\leq \limsup_{p\to 22^{*}} t_{*}(w_{p}) <+\infty.
\end{equation}
Indeed we can argue similarly as in Lemma \ref{le:general} 
(where we had a single function $w$).
By definition and  \eqref{eq:util} and \eqref{Lemafv} of Lemma \ref{Lema f}, we have
\begin{eqnarray*}
{t_{*}(w_{p})}^{2}\|w_{p}\|^{2} &= &\int_{\Omega} |f(t_{*}(w_{p})w_{p})|^{22^{*} -2} f(t_{*}(w_{p})w_{p}) 
f'(t_{*}(w_{p})w_{p}) t_{*}(w_{p}) w_{p} \\
&\leq &\int_{\Omega}|f(t_{*}(w_{p})w_{p})|^{22^{*} }\nonumber \\
&\leq &2^{2^{*}/2}{t_{*}(w_{p})}^{2^{*}}\int_{\Omega}|w_{p}|^{2^{*}} \nonumber
\end{eqnarray*}
and hence $${t_{*}(w_{p})}^{2^{*}-2}\geq \frac{\|w_{p}\|^{2} }{ 2^{2^{*}/2}|w_{p}|_{2^{*}}^{2^{*}}}.$$
Since by assumption, as $p\to 22^{*}$,  $\{w_{p}\}_{p\in(4,22^{*})}$
does not tend to zero in $H^{1}_{0}(\Omega)$ 
and $\{|w_{p}|_{2^{*}}\}_{p\in(4,22^{*})}$ is bounded,
 we infer that  $\{t_{*}(w_{p})\}_{p\in(4,22^{*})}$ is bounded away from zero 
and then $\liminf_{p\to 2 2^{*}} t_{*}(w_{p})>0$.
On the other hand  using  item \eqref{outrasvii} of Lemma \ref{lem:outras}
\begin{eqnarray*}
C_{2}\geq\|w_{p}\|^{2}&  \geq& \frac{1}{2} \int_{\{x\in\Omega: w_{p}(x)\neq0 \}} \frac{f(t_{*}(w_{p})|w_{p}|) ^{22^{*}} }{{t_{*}(w_{p})}^{2} |w_{p}|^{2}} |w_{p}|^{2} \\
&= & \frac1 2 
\int_{ \{x\in\Omega: w_{p}(x)\neq0 \} }  \frac{ f(t_{*}(w_{p}) |w_{p}|)^{22^{*} }  \ (t_{*}(w_{p}) |w_{p}|)^{2^{*}}}{t_{*}(w_{p})|w_{p}|)^{2^{*}} \ {t_{*}(w_{p})}^{2}|
w_{p}|^{2}} |w_{p}|^{2} \\
& = &  \frac{1}{2} \int_{ \{x\in\Omega: w_{p}(x)\neq0 \} } \Big(\frac{ f(t_{*}(w_{p})|w_{p}|) }{  \sqrt{t_{*}(w_{p})|w_{p}|}}\Big)^{22^{*}} {t_{*}(w_{p})} ^{2^{*}-2}|w_{p}|^{2^{*}}\\
&\geq& \frac{1}{2}C  {t_{*}(w_{p})}^{2^{*}-2}\int_{\Omega} |w_{p}|^{2^{*}}\\
&\geq& C' {t_{*}(w_{p})}^{2^{*}-2}
\end{eqnarray*}
and this give  that $\{t_{*}(w_{p})\}_{p\in (4,22^{*})}$ has also to be bounded above when $p\to 22^{*}$,
proving \eqref{eq:infsup}.

\medskip

%
%
\end{remark}


\begin{proposition}\label{prop:importante}
Assume that $\{w_{p}\}_{p\in(4,22^{*})}\subset H^{1}_{0}(\Omega)$ is such that
\begin{itemize}
\item[(a)] for every $p\in (4,22^{*}): w_{p}\in \mathcal N_{p}$, \smallskip
\item[(b)] there exist $C_{1}, C_{2}>0$ such that 
 $$\forall p\in (4,22^{*}): 0< C_{1}\leq |w_{p}|_{2^{*}}\  \text{ and }\ \|w_{p}\| \leq C_{2},$$
\item[(c)] $w_{p}\geq0$ for every $p\in (4,22^{*})$.
\end{itemize}
 Let $t_{*}(w_{p})>0$
the unique value such that
$t_{*}(w_{p})w_{p}\in\mathcal{N}_{*}$. Then
$$\lim_{p\rightarrow 2 2^{*}} t_{*}(w_{p})=1.$$
In particular
$$\lim_{p\rightarrow 2 2^{*}} t_{*}(\mathfrak g_{p})=1.$$

\end{proposition}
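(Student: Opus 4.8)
The plan is to show that along any sequence $p_n\to 22^*$ the numbers $t_n:=t_*(w_{p_n})$ converge to $1$; since the sequence is arbitrary this yields the full limit. First I would use Remark \ref{rem:tp*}: hypotheses (a) and (b) are exactly what that remark requires, so $\{t_n\}$ is bounded and bounded away from zero. Passing to a subsequence, $t_n\to \tau$ for some $\tau\in(0,+\infty)$, and the goal becomes $\tau=1$. The two defining identities are, writing $w_n:=w_{p_n}$,
\begin{equation*}
\|w_n\|^2=\int_\Omega |f(w_n)|^{p_n-2}f(w_n)f'(w_n)w_n
\end{equation*}
(from $w_n\in\mathcal N_{p_n}$) and
\begin{equation*}
t_n^2\|w_n\|^2=\int_\Omega |f(t_nw_n)|^{22^*-2}f(t_nw_n)f'(t_nw_n)\,t_nw_n
\end{equation*}
(from $t_nw_n\in\mathcal N_*$). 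The strategy is to compare the two integrands pointwise, exploiting the monotonicity in Corollary \ref{monotonicidades}\eqref{monotonicidadeiii} together with the behaviour of $f(s)/\sqrt{s}$ as $s\to\infty$ (Lemma \ref{Lema f}\eqref{Lemafvii}), which controls the transition between the exponent $p_n$ and the exponent $22^*$.

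Dividing the second identity by $t_n^2$ and the first by $1$, and restricting to $\{w_n\ne 0\}$, I would rewrite both as integrals of the form $\int \Phi_{q}(t|w_n|)\,|w_n|^2$ where $\Phi_q(s)=|f(s)|^{q-2}f(s)f'(s)s^{-1}$. The key elementary fact is that for $s$ in a bounded set $\Phi_{p_n}(s)\to\Phi_{22^*}(s)$ uniformly, because $|f(s)|^{p_n-2}\to|f(s)|^{22^*-2}$ uniformly on compacts (here one uses that $w_n$ is bounded in $H^1_0$, hence — after extracting — $w_n\to w$ a.e. and in $L^r$ for $r<2^*$, and $\{|w_n|^{2^*}\}$ is uniformly integrable or at least tight enough). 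So the first identity gives, up to $o_n(1)$,
\begin{equation*}
\|w\|^2=\int_\Omega |f(w)|^{22^*-2}f(w)f'(w)w,
\end{equation*}
i.e. the weak limit $w$ lies in $\mathcal N_*$ (and $w\ne 0$ because $|w_n|_{2^*}\ge C_1>0$ — this is the only place the lower bound in (b) is really needed, and one must argue that no $L^{2^*}$-mass escapes, e.g. via the critical-problem structure or by noting we only need $w\not\equiv 0$ after possibly invoking that the minimizing family does not vanish, cf. Remark \ref{rem:nuovo}). Meanwhile passing to the limit in the second identity gives $\tau^2\|w\|^2=\int_\Omega|f(\tau w)|^{22^*-2}f(\tau w)f'(\tau w)\tau w$, i.e. $\tau w\in\mathcal N_*$ as well. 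Since $w$ and $\tau w$ both lie on $\mathcal N_*$ and — by the last line of the proof of Lemma \ref{le:general}, using Corollary \ref{monotonicidades}\eqref{monotonicidadeiii} — for a fixed nonzero function there is a unique positive multiple landing on $\mathcal N_*$, we conclude $\tau=1$.

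The main obstacle, and the step to treat with care, is the passage to the limit in the nonlinear terms near the critical exponent: the factor $|f(w_n)|^{p_n-2}$ combines a moving exponent with a function $w_n$ that is only bounded (not compact) in $H^1_0$, so one cannot naively invoke dominated convergence on all of $\Omega$. I would handle this by using Lemma \ref{Lema f}\eqref{Lemafv}, which gives $|f(w_n)|^{p_n}\le 2^{p_n/4}|w_n|^{p_n/2}$ with $p_n/2<2^*$, so the relevant integrands are dominated by functions with subcritical growth and the sequence $\{|w_n|^{p_n/2}\}$ is equi-integrable by the boundedness in $L^{2^*}$; combined with a.e. convergence of $w_n$ (along the subsequence) this justifies $\int_\Omega|f(w_n)|^{p_n-2}f(w_n)f'(w_n)w_n\to\int_\Omega|f(w)|^{22^*-2}f(w)f'(w)w$ and the analogous statement for $t_nw_n$. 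The positivity hypothesis (c) is used only to legitimately write $w_n=|w_n|$ throughout and to guarantee $w\ge 0$, $w\ne0$. Finally, the ``in particular'' assertion for $\mathfrak g_p$ follows because the family $\{\mathfrak g_p\}_{p\in[4+\eta,22^*)}$ satisfies (a) trivially, (b) by Corollary \ref{cor:nozero} together with Remark \ref{rem:nuovo}, and (c) since $\mathfrak g_p>0$.
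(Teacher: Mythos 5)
Your overall strategy --- extract $t_n\to\tau$, show that both the weak limit $w$ and $\tau w$ lie on $\mathcal N_*$, and conclude $\tau=1$ by uniqueness of the Nehari projection --- is different from the paper's and, as written, has a genuine gap at its central step. To place $w$ on $\mathcal N_*$ you need $\|w\|^2=\int_\Omega |f(w)|^{22^*-2}f(w)f'(w)w$, but the identity you start from reads $\|w_n\|^2=\int_\Omega|f(w_n)|^{p_n-2}f(w_n)f'(w_n)w_n$, and neither side passes to the corresponding quantity for the weak limit. On the left, weak convergence gives only $\|w\|^2\le\liminf\|w_n\|^2$, possibly with strict inequality. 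On the right, the integrand is dominated by $2^{p_n/4}|w_n|^{p_n/2}$ with $p_n/2\to 2^*$, and the equi-integrability you invoke fails precisely in this regime: H\"older gives $\int_E|w_n|^{p_n/2}\le |E|^{1-p_n/(22^*)}\,|w_n|_{2^*}^{p_n/2}$ and the exponent of $|E|$ tends to $0$, so a concentrating family (a bubble $U_{R_n,x_0}$ with $R_n\to\infty$ suitably coupled to $p_n\to 22^*$) can carry mass into the limit. Under hypotheses (a)--(c) alone the weak limit may even be $w=0$ --- the lower bound $|w_p|_{2^*}\ge C_1$ does not survive weak limits --- so the chain ``$w\in\mathcal N_*$, $\tau w\in\mathcal N_*$, hence $\tau=1$'' cannot be closed without a compactness ingredient that is not available here.

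The paper avoids this entirely by never identifying any limit with a quantity built from $w$. It shows instead that, \emph{for the same function} $w_n$, the two integrals $\int_\Omega f(w_n)^{22^*-2}f(w_n)f'(w_n)w_n$ and $\int_\Omega f(w_n)^{p_n-2}f(w_n)f'(w_n)w_n=\|w_n\|^2$ differ by $o_n(1)$ (splitting $\Omega$ according to the size of $f(w_n)$ and using the Mean Value Theorem in the exponent), so that both tend to the same limit $L>0$; then the scaling inequalities \eqref{outrasiii} and \eqref{outrasvii} of Lemma \ref{lem:outras} compare $f(t_nw_n)$ with $f(w_n)$ and give $t_0^2L\ge t_0^{2^*}L$ if $t_0>1$, and the reverse inequality if $t_0<1$, forcing $t_0=1$. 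If you wish to keep your framework, the repair is exactly this same-sequence comparison of exponents in place of the weak-limit identification; as it stands, your argument does not go through.
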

\begin{proof}
We assume that $p_{n}\to 22^{*}$ as $n\to+\infty$ and $w_{n}:=w_{p_{n}}\in \mathcal N_{p_{n}}$.
In virtue of (c) it is $f(w_{n}), f'(w_{n})\geq0$.
Moreover by Remark \ref{rem:tp*} we can assume that 
$$\lim_{n\to+\infty} t_{*}(w_{n}) = t_{0}>0.$$

Let us begin by proving the following:

{\bf Claim:} $A_{n}:=\displaystyle \Big|\int_{\Omega} f(w_{n})^{22^{*}-2} f(w_{n}) f'(w_{n})w_{n} 
-\int_{\Omega} f(w_{n})^{p_{n}-2}f(w_{n})f'(w_{n})w_{n}\Big| = o_{n}(1)$.

Indeed let us fix $\gamma\in(0,1)$ and let $n_{0}\in \mathbb N$ such that $22^{*}-p_{n_{0}}<\gamma$.
We have
\begin{eqnarray*}
A_{n}&\leq& \int_{\Omega} | f(w_{n})^{22^{*}-2} -f(w_{n})^{p_{n}-2}| f(w_{n})f'(w_{n})w_{n}\\
&=& \int_{\{0< f(w_{n})<\gamma\}} | f(w_{n})^{22^{*}-2} -f(w_{n})^{p_{n}-2}| f(w_{n})f'(w_{n})w_{n}\\  
&+&  \int_{\{\gamma\leq f(w_{n})\leq1\}} | f(w_{n})^{22^{*}-2} -f(w_{n})^{p_{n}-2}| f(w_{n})f'(w_{n})w_{n}\\
 &+&\int_{\{ f(w_{n})>1\}} | f(w_{n})^{22^{*}-2} -f(w_{n})^{p_{n}-2}| f(w_{n})f'(w_{n})w_{n} \\ 
 &=:& a_{n}+b_{n}+c_{n}.
\end{eqnarray*}
Let us estimate $a_{n}, b_{n},c_{n}$.

Using \eqref{eq:util}, and being 
 $p_{n}$ and $22^{*}$ greater then $4$, we have
\begin{eqnarray*}
a_{n}&\leq& \int_{\{0< f(w_{n})<\gamma\}} f(w_{n})^{22^{*}} + \int_{\{0< f(w_{n})<\gamma\}} f(w_{n})^{p_{n}}\\
&\leq& \gamma ^{22^{*}}|\Omega| + \gamma^{p_{n}}|\Omega|\\
&\leq &2\gamma^{4}|\Omega|.
\end{eqnarray*}

By the Mean Value Theorem, for some $\xi_{n}\in (p_{n}-2, 22^{*}-2)$ it is (again by \eqref{eq:util})
\begin{eqnarray*}
b_{n} &=& \int_{\{\gamma\leq f(w_{n}) \leq1\}} f(w_{n})^{\xi_{n}} (p_{n} - 22^{*}) \ln (f(w_{n})) f(w_{n})f'(w_{n})w_{n}\\
&\leq& (p_{n} - 22^{*}) \ln (\gamma) \int_{\{ \gamma\leq f(w_{n})\leq 1\}} f(w_{n})^{\xi_{n}+2}\\
&\leq& (p_{n} - 22^{*}) |\Omega| \ln (\gamma)  \\ 
&=&o_{n}(1).
\end{eqnarray*}

Finally,
\begin{eqnarray*}
c_{n} &=& \int_{\{ f(w_{n})>1\}} \Big( f(w_{n})^{22^{*}-2} - f(w_{n})^{p_{n}-2}\Big)f(w_{n})f'(w_{n})w_{n} \\
&\leq&  \int_{\{ f(w_{n})>1\}} \Big( f(w_{n})^{22^{*}} - f(w_{n})^{p_{n}}\Big) \\
&\leq& \int_{\{ f(w_{n})>1\}} \Big( f(w_{n})^{22^{*}} - f(w_{n})^{p_{n_{0}}}\Big)
\end{eqnarray*}
where we used that $f(w_{n})^{p_{n}}>f(w_{n})^{p_{n_{0}}}$ for $n>n_{0}$.
Using again the Mean Value Theorem, for some $\xi_{0} \in (p_{n_{0}}, 22^{*})$,
and the fact that for $s>1$ it is $\ln (s)\leq C s^{22^{*} -\xi_{0}}$,
we have for every $n>n_{0}$
\begin{eqnarray*}
 c_{n}&\leq&\int_{\{ f(w_{n})>1\}} f(w_{n})^{\xi_{0}} \ln(f(w_{n})) (22^{*} -p_{n_{0}}) \\
 &\leq&(22^{*} - p_{n_{0}}) C \int_{\{ f(w_{n})>1\}} f(w_{n})^{22^{*}}\\
 &\leq& C \gamma,
\end{eqnarray*}
where we used that  $\int_{\{ f(w_{n})>1\}} f(w_{n})^{22^{*}}\leq C$. 

\medskip

Summing up, $A_{n} \leq 2\gamma^{4}|\Omega| + o_{n}(1) + C\gamma$ and then
$$\limsup_{n\to+\infty}A_{n}\leq 2\gamma^{4}|\Omega|  + C\gamma$$
which  proves  Claim.

\medskip

Observe now that, by \eqref{eq:util},
$$
0\leq \int_{\Omega} f(w_{n})^{22^{*}-2}f(w_{n})f'(w_{n})w_{n} 
\leq \int_{\Omega}f(w_{n})^{22^{*}}
\leq C\int_{\Omega} w_{n}^{2^{*}} \leq C',
$$
then up to subsequences  we have
\begin{equation}\label{eq:claim0L}
\int_{\Omega} f(w_{n})^{22^{*}-2}f(w_{n})f'(w_{n})w_{n} \to L
\end{equation}
and then the above Claim  gives
\begin{equation}\label{eq:claim1L}
\|w_{n}\|^{2}=\int_{\Omega} f(w_{n})^{p_{n}-2}f(w_{n})f'(w_{n})w_{n} \to L\geq0.
\end{equation}
Since $w_{n}\not\to0$ in $H^{1}_{0}(\Omega)$ (being the Nehari manifolds 
uniformly bounded away from zero, see Remark \ref{rem:limbaixo}), it has to be  $L>0$.

\medskip

 Suppose that $t_{0}>1$; hence, for large $n$, we have $t_{*}(w_{n})>1$. Since $w_{n}\in \mathcal N_{p_{n}}$ and 
 $t_{*}(w_{n})w_{n}\in \mathcal N_{*}$, it is,
by \eqref{eq:claim1L} and  \eqref{outrasiii} and \eqref{outrasvii} of Lemma \ref{lem:outras},
\begin{eqnarray*}
\|w_{n}\|^{2} &=& \int_{\Omega} f(w_{n})^{p_{n}-2}f(w_{n})f'(w_{n})w_{n}\to L, \\
t_{*}(w_{n})^{2}\|w_{n}\|^{2} &=&\int_{\Omega} f(t_{*}(w_{n})w_{n})^{22^{*}-2}f(t_{*}(w_{n})w_{n})f'(t_{*}(w_{n})w_{n})t_{*}(w_{n})w_{n}\\
&\geq&  t_{*}(w_{n})^{2^{*}} \int_{\Omega} f(w_{n})^{22^{*}-2}f(w_{n})f'(w_{n})w_{n}.
\end{eqnarray*}
Passing to the limit above and using \eqref{eq:claim0L}, we deduce
$$t_{0}^{2} L \geq t_{0}^{2^{*}}L$$
and then $t_{0}\leq1$ which is a contradiction.

On the other hand, if  $t_{0}<1$, we can assume that $t_{*}(w_{n})<1$. Then as before,
\begin{eqnarray*}
\|w_{n}\|^{2} &=& \int_{\Omega} f(w_{n})^{p_{n}-2}f(w_{n})f'(w_{n})w_{n}\to L, \\
t_{*}(w_{n})^{2}\|w_{n}\|^{2} &=&\int_{\Omega} f(t_{*}(w_{n})w_{n})^{22^{*}-2}f(t_{*}(w_{n})w_{n})
f'(t_{*}(w_{n})w_{n})t_{*}(w_{n})w_{n}\\
&\leq& t_{*}(w_{n})^{2^{*}} \int_{\Omega} f(w_{n})^{22^{*}-2}f(w_{n})f'(w_{n})w_{n}
\end{eqnarray*}
and passing to the limit, by \eqref{eq:claim0L},  we deduce
$$t_{0}^{2}L\leq t_{0}^{2^{*}}L$$
and then $t_{0}\geq1$ which is again a contradiction.

\medskip

Finally,
since
$\{|\mathfrak g_{p}|_{2^{*}}\}_{p\in[4+\eta,22^{*})}$
is bounded away from zero by the final part in Remark \ref{rem:nuovo}, and
$\{\mathfrak g_{p}\}_{p\in(4,22^{*})}$ are bounded in $H^{1}_{0}(\Omega)$ by Corollary \ref{cor:nozero},
we have that
$\{\mathfrak g_{p}\}_{p\in [4+\eta,22^{*})}$ satisfy (b), and clearly (a) and (c). Then
the conclusion follows.
\end{proof}

Thanks to the previous result we get the next:

\begin{proposition}\label{le:liminf}
We have 
$$m_{*}\leq \liminf_{p\to 22^{*}} \mathfrak m_{p}  .$$
\end{proposition}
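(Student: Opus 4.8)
The plan is to combine Proposition~\ref{limitate} (which already gives $\limsup_{p\to22^{*}}\mathfrak m_{p}\le m_{*}$, so that $\ell:=\liminf_{p\to22^{*}}\mathfrak m_{p}$ is finite, $\ell\in[0,m_{*}]$) with the projection result of Proposition~\ref{prop:importante}. First I would pick a sequence $p_{n}\to 22^{*}$ realizing the lower limit, i.e. $\mathfrak m_{p_{n}}\to\ell$, and set $g_{n}:=\mathfrak g_{p_{n}}$ and $t_{n}:=t_{*}(g_{n})>0$, the unique value with $t_{n}g_{n}\in\mathcal N_{*}$. The family $\{\mathfrak g_{p}\}_{p\in[4+\eta,22^{*})}$ satisfies hypotheses (a), (b), (c) of Proposition~\ref{prop:importante} by Remark~\ref{rem:nuovo} and Corollary~\ref{cor:nozero}, so $t_{n}\to1$; in particular $v_{n}:=t_{n}g_{n}$ is bounded in $H^{1}_{0}(\Omega)$, with $v_{n}\in\mathcal N_{*}$.

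Next I would use that, since $g_{n}\in\mathcal N_{p_{n}}$, item \eqref{lemmanehariiv} of Lemma~\ref{lemmanehari} tells us $t\mapsto I_{p_{n}}(tg_{n})$ attains its maximum on $(0,+\infty)$ at $t=1$, whence $\mathfrak m_{p_{n}}=I_{p_{n}}(g_{n})=\max_{t>0}I_{p_{n}}(tg_{n})\ge I_{p_{n}}(v_{n})$, while $v_{n}\in\mathcal N_{*}$ gives $m_{*}\le I_{*}(v_{n})$. Subtracting these and using $I_{*}(v)-I_{p}(v)=\frac{1}{p}\int_{\Omega}|f(v)|^{p}-\frac{1}{22^{*}}\int_{\Omega}|f(v)|^{22^{*}}$, I get
\[
m_{*}-\mathfrak m_{p_{n}}\ \le\ I_{*}(v_{n})-I_{p_{n}}(v_{n})\ =\ \frac{1}{p_{n}}\int_{\Omega}|f(v_{n})|^{p_{n}}-\frac{1}{22^{*}}\int_{\Omega}|f(v_{n})|^{22^{*}}.
\]
It then remains to show the right-hand side is $o_{n}(1)$, for then $m_{*}\le\mathfrak m_{p_{n}}+o_{n}(1)\to\ell$, which is the claim.

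For that last step I would write the right-hand side as $\frac{1}{p_{n}}\big(\int_{\Omega}|f(v_{n})|^{p_{n}}-\int_{\Omega}|f(v_{n})|^{22^{*}}\big)+\big(\frac{1}{p_{n}}-\frac{1}{22^{*}}\big)\int_{\Omega}|f(v_{n})|^{22^{*}}$. The second summand tends to $0$ because $1/p_{n}\to1/22^{*}$ and, by \eqref{Lemafv} of Lemma~\ref{Lema f} and the Sobolev embedding, $\int_{\Omega}|f(v_{n})|^{22^{*}}\le 2^{2^{*}/2}\int_{\Omega}|v_{n}|^{2^{*}}$ is bounded (as $\{v_{n}\}$ is bounded in $H^{1}_{0}(\Omega)$). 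For the first summand I would argue exactly as for the ``Claim'' inside the proof of Proposition~\ref{prop:importante}: fixing $\gamma\in(0,1)$ and splitting $\Omega$ into $\{0<f(v_{n})<\gamma\}$, $\{\gamma\le f(v_{n})\le1\}$ and $\{f(v_{n})>1\}$, one bounds the integrand by $2\gamma^{4}|\Omega|$ on the first set (since $p_{n},22^{*}>4$), by $(22^{*}-p_{n})|\ln\gamma|\,|\Omega|=o_{n}(1)$ on the second (Mean Value Theorem in the exponent), and by $C\gamma$ on the third (comparison of $p_{n}$ with a fixed exponent close to $22^{*}$, the estimate $\ln s\le C s^{22^{*}-\xi_{0}}$ for $s>1$, and the uniform bound on $\int_{\{f(v_{n})>1\}}f(v_{n})^{22^{*}}$); letting $n\to\infty$ and then $\gamma\to0$ gives the conclusion.

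I expect the only genuinely delicate point to be this splitting estimate for $\int_{\Omega}|f(v_{n})|^{p_{n}}-\int_{\Omega}|f(v_{n})|^{22^{*}}$; the rest is a short assembly of Propositions~\ref{limitate} and~\ref{prop:importante} with the elementary properties of $f$ from Lemma~\ref{Lema f}. Since that splitting is essentially a verbatim repetition of the computation already performed for the Claim in Proposition~\ref{prop:importante}, in the write-up I would simply cite it rather than redo the estimates in full detail.
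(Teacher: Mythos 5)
Your proof is correct. The skeleton coincides with the paper's --- both project the ground states $\mathfrak g_{p_n}$ onto $\mathcal N_*$ and invoke Proposition \ref{prop:importante} to get $t_n:=t_*(\mathfrak g_{p_n})\to 1$ --- but the energy comparison is carried out by a genuinely different mechanism. The paper writes $m_*\le I_*(t_n\mathfrak g_n)= t_n^2\,\mathfrak m_{p_n}+\frac{t_n^2}{p_n}\int_\Omega|f(\mathfrak g_n)|^{p_n}-\frac{1}{22^*}\int_\Omega|f(t_n\mathfrak g_n)|^{22^*}$ and passes to the limit by asserting that both nonlinear integrals converge to $\int_\Omega|f(v)|^{22^*}$, where $v$ is the weak limit of $\mathfrak g_n$; that identification of limits is the delicate point of the paper's argument, since $|f(u)|^{22^*}$ grows like $|u|^{2^*}$ and the embedding $H^1_0(\Omega)\hookrightarrow L^{2^*}(\Omega)$ is not compact. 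You instead use the characterization $\mathfrak m_{p_n}=\max_{t>0}I_{p_n}(t\mathfrak g_n)\ge I_{p_n}(t_n\mathfrak g_n)$ (legitimate, by the uniqueness in item \eqref{lemmanehariiv} of Lemma \ref{lemmanehari}, which forces $t_{p_n}(\mathfrak g_n)=1$) to reduce everything to the single quantity $I_*(v_n)-I_{p_n}(v_n)$ evaluated at the \emph{same} function $v_n=t_n\mathfrak g_n$, so the gradient terms cancel exactly and only the difference $\frac{1}{p_n}\int_\Omega|f(v_n)|^{p_n}-\frac{1}{22^*}\int_\Omega|f(v_n)|^{22^*}$ survives; this is then shown to vanish by the exponent-splitting estimate already established as the Claim inside Proposition \ref{prop:importante}, using only the boundedness of $\{v_n\}$ in $H^1_0(\Omega)$ and no identification of weak limits. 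Your route therefore buys robustness --- it sidesteps the compactness issue entirely --- at the modest cost of invoking the splitting Claim a second time; the paper's route is shorter on the page but leans on the convergence of critical-growth integrals, which is not justified there.
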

\begin{proof}
For $p_{n} \to 22^{*}$,  by Corollary \ref{cor:nozero}
we have $\mathfrak g_{n}:=\mathfrak g_{p_{n}}\rightharpoonup v$ in $H^{1}_{0}(\Omega)$
and consequently
$$\int_{\Omega}|f(\mathfrak g_{n})|^{p_{n}} \to \int_{\Omega} |f(v)|^{22^{*}}\,, \qquad 
\int_{\Omega} |f(t_{*}(\mathfrak g_{n})\mathfrak g_{n})|^{22^{*}} \to \int_{\Omega}|f(v)|^{22^{*}}.$$
Furthermore, by  Proposition \ref{prop:importante} we have $t_{*}(\mathfrak g_{n})\to 1$.
Since by definition
$$\frac{1}{2}\|\mathfrak g_{n}\|^{2} = \mathfrak  m_{p_{n}}+\frac{1}{p_{n}}\int_{\Omega}|f(\mathfrak g_{n})|^{p_{n}},$$
we get
\begin{eqnarray*}
m_{*}&\leq& I_{*}(t_{*}(\mathfrak g_{n})\mathfrak g_{n}) \\
&=& \frac{{t_{*}(\mathfrak g_{n})}^{2}}{2}\|\mathfrak g_{n}\|^{2} -\frac{1}{22^{*}}\int_{\Omega}
|f(t_{*}(\mathfrak g_{n})\mathfrak g_{n})|
^{22^{*}}\\
&=& {t_{*}(\mathfrak g_{n})}^{2} \mathfrak m_{p_{n}}+\frac{{t_{*}(\mathfrak g_{n})}^{2}}{p_{n}}\int_{\Omega}|f(\mathfrak g_{n})|^{p_{n}}
-\frac{1}{22^{*}}\int_{\Omega}|f(t_{*}(\mathfrak g_{n})\mathfrak g_{n})|^{22^{*}}
\end{eqnarray*}
and passing to the limit  we deduce $m_{*} \leq \liminf_{n\to+\infty}\mathfrak m_{p_{n}}$.
\end{proof}

By Proposition \ref{limitate} and Proposition \ref{le:liminf} we deduce the  desired result.
\begin{theorem}\label{prop:limitem*}
For any bounded domain $\Omega$, it holds
$$\lim_{p\to 22^*} \mathfrak m_{p} =m_{*}.$$
\end{theorem}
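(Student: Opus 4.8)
The plan is to conclude \texttt{Theorem \ref{prop:limitem*}} immediately by combining the two inequalities already established in \texttt{Proposition \ref{limitate}} and \texttt{Proposition \ref{le:liminf}}. Indeed, \texttt{Proposition \ref{limitate}} gives $\limsup_{p\to22^{*}} \mathfrak m_{p}\le m_{*}$, while \texttt{Proposition \ref{le:liminf}} gives $m_{*}\le \liminf_{p\to 22^{*}} \mathfrak m_{p}$. Chaining these,
\[
m_{*}\le \liminf_{p\to22^{*}} \mathfrak m_{p}\le \limsup_{p\to22^{*}}\mathfrak m_{p}\le m_{*},
\]
so all inequalities are equalities and $\liminf$ and $\limsup$ coincide; hence $\lim_{p\to22^{*}}\mathfrak m_{p}$ exists and equals $m_{*}$. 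This is the entire argument: the theorem is a formal consequence, with no further computation required.

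Since the real content has been pushed into the two preceding propositions, let me instead describe how I would prove \emph{those} if they were not already available, so the reader sees where the work lies. For the upper bound (\texttt{Proposition \ref{limitate}}), I would fix $\varepsilon>0$, pick a near-minimizer $\overline v\in\mathcal N_{*}$ with $I_{*}(\overline v)<m_{*}+\varepsilon$, project it onto each subcritical Nehari manifold $\mathcal N_{p}$ via the scaling factor $t_{p}=t_{p}(\overline v)$ from \texttt{(\ref{lemmanehariiv})} of \texttt{Lemma \ref{lemmanehari}}, and use $\mathfrak m_{p}\le I_{p}(t_{p}\overline v)$. The decisive ingredient is \texttt{Lemma \ref{le:general}}, which gives $t_{p}\to 1$ since $\overline v\in\mathcal N_{*}$; then $I_{p}(t_{p}\overline v)\to I_{*}(\overline v)$ by dominated convergence (controlled using \texttt{(\ref{Lemafv})} of \texttt{Lemma \ref{Lema f}}) and one lets $\varepsilon\to 0$.

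For the lower bound (\texttt{Proposition \ref{le:liminf}}), the plan would be: along a sequence $p_{n}\to 22^{*}$, the ground states $\mathfrak g_{n}$ are bounded in $H^{1}_{0}(\Omega)$ by \texttt{Corollary \ref{cor:nozero}}, hence (up to a subsequence) converge weakly to some $v$, with strong convergence in $L^{s}$ for $s<2^{*}$ giving control of the lower-order nonlinear terms. One projects $\mathfrak g_{n}$ onto $\mathcal N_{*}$ with factor $t_{*}(\mathfrak g_{n})$, uses $m_{*}\le I_{*}(t_{*}(\mathfrak g_{n})\mathfrak g_{n})$, rewrites the right-hand side in terms of $\mathfrak m_{p_{n}}$, and passes to the limit. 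Here the crucial and most delicate input is \texttt{Proposition \ref{prop:importante}}, asserting $t_{*}(\mathfrak g_{n})\to 1$; this in turn rests on the subtle \texttt{Claim} inside that proof controlling the difference of the $22^{*}$- and $p_{n}$-exponent integrals (split over $\{0<f(w_n)<\gamma\}$, $\{\gamma\le f(w_n)\le 1\}$, $\{f(w_n)>1\}$ and estimated via the Mean Value Theorem and the inequalities of \texttt{Lemma \ref{Lema f}} and \texttt{Lemma \ref{lem:outras}}). So the main obstacle is entirely upstream — in establishing that the projection factors onto $\mathcal N_{*}$ tend to $1$ — and once \texttt{Proposition \ref{limitate}} and \texttt{Proposition \ref{le:liminf}} are in hand, \texttt{Theorem \ref{prop:limitem*}} costs nothing.
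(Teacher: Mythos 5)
Your proof is correct and is exactly the paper's argument: the theorem is obtained by chaining Proposition \ref{limitate} and Proposition \ref{le:liminf}, and your sketches of those two propositions (projection of a near-minimizer via Lemma \ref{le:general} for the upper bound, projection of the ground states onto $\mathcal N_{*}$ via Proposition \ref{prop:importante} for the lower bound) match the paper's proofs. Nothing to add.
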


\subsection{A local Palais-Smale condition for $I_{*}$}\label{subsec:PS}

Let us recall the following Brezis-Lieb type splitting involving the function $f$
available when $w_{n}:=v_{n}-v\rightharpoonup0$ in $H^{1}_{0}(\Omega)$.
They will be useful in the next two lemmas.

The first
one concerns the power nonlinearity, that is  
\begin{equation}\label{eq:1BL}
\int_{\Omega} |f(v_{n})|^{22^{*}} = \int_{\Omega}|f(v)|^{22^{*}} +\int_{\Omega}|f(w_{n})|^{22^{*}}+o(1).
\end{equation}
See  \cite[equation (3.11)]{AUG} and observe that 
the splitting also holds  in the critical case $p=22^{*}$.

The second one  is 
$$\int_{\mathbb R^{N}}  \Big| |f(w_{n})|^{22^{*}-2} f(w_{n})f'(w_{n}) w_{n}
-|f(v_{n})|^{22^{*}-2} f(v_{n})f'(v_{n}) v_{n} + |f(v)|^{22^{*}-2} f(v)f'(v) v
\Big|^{\alpha}= o_{n}(1)$$
which holds for  some $\alpha\in (2, 2^{*})$, 
see \cite[equation (3.14)]{AUG}.
However in a bounded domain we can even allow $\alpha=1$
and consequently we obtain
\begin{eqnarray}
\int_{\Omega}  |f(w_{n})|^{22^{*}-2} f(w_{n})f'(w_{n}) w_{n} &=&
\int_{\Omega}|f(v_{n})|^{22^{*}-2} f(v_{n})f'(v_{n}) v_{n} \label{eq:2BL}\\
&-& |f(v)|^{22^{*}-2} f(v)f'(v) v+ o_{n}(1).\nonumber 
\end{eqnarray}

\medskip

To show the local  Palais-Smale condition for $I_{*}$
it will be useful the next auxiliary result.
Recall that $S$ is the best Sobolev constant of the embedding $H^{1}_{0}(\Omega)$ into $L^{2^{*}}(\Omega)$.

\begin{lemma}\label{lem:limitacaoPS} 
Let $\{v_{n}\}$ be a  $(PS)$ sequence for the functional $I_{*}$ at level $d\in \mathbb R$.
Then, up to subsequences
\begin{itemize}
\item[1.] $v_{n}\rightharpoonup v$ in $H^{1}_{0}(\Omega)$,
\item[2.] $I'_{*}(v) = 0$, i.e. $v$ is a solution of \eqref{star},
\item[3.] setting, $w_{n}:=v_{n} - v$, then
$$I_{*}(v_{n}) = I_{*}(v) +I_{*}(w_{n}) +o_{n}(1) \quad \text{ and }
\quad I'_{*}(w_{n}) \to 0.$$
In particular $\{w_{n}\}$ is  a $(PS)$ sequence for $I_{*}$ at level $d-I_{*}(v)$.
\end{itemize} 
\end{lemma}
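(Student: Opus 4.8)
The plan is to follow the standard Brezis--Lieb / concentration-compactness scheme, adapted to the presence of the change of variable $f$. First I would show that $\{v_n\}$ is bounded in $H^1_0(\Omega)$: this is exactly the argument in Remark \ref{rem:general} (or Corollary \ref{cor:nozero}) applied with $p_n\equiv 22^*$, namely from $I_*(v_n)=d+o_n(1)$ and $I'_*(v_n)[v_n]=o_n(1)\|v_n\|$ one gets, using \eqref{eq:util},
\[
d+o_n(1)+o_n(1)\|v_n\| = I_*(v_n)-\tfrac{2}{22^*}I'_*(v_n)[v_n] \geq \tfrac{22^*-4}{2\cdot 22^*}\|v_n\|^2,
\]
which forces $\|v_n\|$ bounded. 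Hence, up to a subsequence, $v_n\rightharpoonup v$ in $H^1_0(\Omega)$, $v_n\to v$ in $L^q(\Omega)$ for $q<2^*$, and $v_n\to v$ a.e.; this proves item 1.

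For item 2, I would pass to the limit in $I'_*(v_n)[\varphi]=o_n(1)$ for fixed $\varphi\in C_0^\infty(\Omega)$. The linear term $\int_\Omega \nabla v_n\nabla\varphi\to\int_\Omega\nabla v\nabla\varphi$ by weak convergence. For the nonlinear term, note that by Lemma \ref{Lema f}\eqref{Lemafix} the integrand $|f(v_n)|^{22^*-2}f(v_n)f'(v_n)$ is bounded in $L^{2^*/(2^*-1)}(\Omega)$ (since $|f(s)f'(s)|\le 2^{-1/2}$ and $|f(s)|^{22^*-2}\lesssim |s|^{2^*-1}$ by Lemma \ref{Lema f}\eqref{Lemafv}), and converges a.e. to $|f(v)|^{22^*-2}f(v)f'(v)$; a standard weak-convergence lemma (Willem-style) then gives convergence against the bounded test function $\varphi$, so $I'_*(v)[\varphi]=0$ for all $\varphi$, i.e. $v$ solves \eqref{star}.

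For item 3 I would set $w_n:=v_n-v\rightharpoonup 0$ and combine the two Brezis--Lieb splittings already recorded: the gradient term splits trivially as $\|v_n\|^2=\|v\|^2+\|w_n\|^2+o_n(1)$ by weak convergence, and the nonlinearity splits via \eqref{eq:1BL}. Together these give $I_*(v_n)=I_*(v)+I_*(w_n)+o_n(1)$. To show $I'_*(w_n)\to 0$ in $(H^1_0)'$, I would write, for $\varphi\in H^1_0(\Omega)$ with $\|\varphi\|\le 1$,
\[
I'_*(w_n)[\varphi]=\int_\Omega\nabla w_n\nabla\varphi-\int_\Omega |f(w_n)|^{22^*-2}f(w_n)f'(w_n)\varphi,
\]
use $\int_\Omega\nabla w_n\nabla\varphi=\int_\Omega\nabla v_n\nabla\varphi-\int_\Omega\nabla v\nabla\varphi$, and rewrite the nonlinear integrand using \eqref{eq:2BL} as $|f(v_n)|^{22^*-2}f(v_n)f'(v_n)-|f(v)|^{22^*-2}f(v)f'(v)+r_n$ with $\|r_n\|_{L^1(\Omega)}=o_n(1)$; since $r_n$ is also bounded in $L^{2^*/(2^*-1)}$ (being a difference of bounded-in-$L^{2^*/(2^*-1)}$ terms), interpolation upgrades $r_n\to 0$ in $L^{\beta}$ for some $\beta\in(1,2^*/(2^*-1))$, hence the $r_n$-contribution is $o_n(1)$ uniformly in $\varphi$. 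The remaining pieces reassemble exactly into $I'_*(v_n)[\varphi]-I'_*(v)[\varphi]$, which is $o_n(1)+0$ uniformly in $\|\varphi\|\le 1$. Thus $I'_*(w_n)\to 0$, and $I_*(w_n)=I_*(v_n)-I_*(v)+o_n(1)\to d-I_*(v)$, so $\{w_n\}$ is a $(PS)_{d-I_*(v)}$ sequence.

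The main obstacle is the bookkeeping in item 3: making sure that the "error" term $r_n$ coming from the Brezis--Lieb splitting \eqref{eq:2BL} of the nonlinearity can be controlled in a dual norm uniformly over test functions, rather than merely in $L^1$. The cure is the observation that $r_n$ is simultaneously bounded in $L^{2^*/(2^*-1)}(\Omega)$ and tending to $0$ in $L^1(\Omega)$, so by interpolation it tends to $0$ in some intermediate $L^\beta(\Omega)$ with conjugate exponent subcritical, which suffices to conclude via Sobolev embedding; the presence of $f$ does not obstruct this since $f,f'$ are globally bounded-ish in the relevant sense (Lemma \ref{Lema f}\eqref{Lemafix}, \eqref{Lemafv}).
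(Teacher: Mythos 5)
Your proposal is correct and follows essentially the same route as the paper: boundedness from $I_*(v_n)-\tfrac{1}{2^*}I'_*(v_n)[v_n]$ together with \eqref{eq:util}, weak convergence, passage to the limit in $I'_*(v_n)[\varphi]$ using the $L^{(2^*)'}$-bound on $|f(v_n)|^{22^*-2}f(v_n)f'(v_n)$ from Lemma \ref{Lema f}, and the energy splitting via \eqref{eq:1BL}.

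The one place where you genuinely diverge is the proof that $I'_*(w_n)\to 0$ in $H^{-1}(\Omega)$. The paper disposes of this in one line by asserting that $|f(w_n)|^{22^*-2}f(w_n)f'(w_n)\to 0$ in $H^{-1}(\Omega)$, which as stated is not a complete justification (note also that $\sup_{\|\varphi\|=1}\int_\Omega\nabla w_n\nabla\varphi=\|w_n\|$ does not vanish, so one really must pair the two terms as you do). Your argument --- rewriting $I'_*(w_n)[\varphi]$ as $I'_*(v_n)[\varphi]-I'_*(v)[\varphi]$ plus a remainder $r_n$, and upgrading the $L^1$-smallness of $r_n$ to $L^\beta$-smallness for some $\beta$ with $\beta'<2^*$ by interpolation against the uniform $L^{(2^*)'}$ bound --- is the standard correct way to close this gap. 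Only one small caveat: the remainder you need is the Brezis--Lieb splitting of the nonlinearity \emph{itself}, not of the products with $w_n,v_n,v$ as written in \eqref{eq:2BL}; the pointwise version you invoke is the companion estimate in the same source and is equally standard, so this is a citation mismatch rather than a mathematical gap.
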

\begin{proof}
If $d\in \mathbb R,$  $I_{*}(v_{n}) \to d$ and $I_{*}'(v_{n})\to 0$ then
\begin{equation*}\label{eq:}
I_{*}(v_{n})- \frac{1}{2^{*}} I_{*}'(v_{n})[v_{n}] \leq  C(1 +\|v_{n}\|). 
\end{equation*}
On the other hand, by the computation above
$$
I_{*}(v_{n}) - \frac{1 }{2^{* }} I'_{*}(v_{n})[v_{n}] 
\geq \frac{2^{*}-2}{22^{*}} \int_{\Omega}|\nabla v_{n}|^{2}
$$
and the boundedness of $\{v_{n}\}$ follows.
Then we can assume  that $v_{n} \rightharpoonup v$ in $H^{1}_{0}(\Omega)$
with strong convergence  $L^{s}(\Omega), s\in [1,2^{*})$ and $v_{n} \to v$ a.e. in $\Omega$.
Note now, using 
\eqref{Lemafix} of Lemma \ref{Lema f},
that
$$\int_{\Omega}\Big ||f(v_{n})|^{22^{*}-2} f(v_{n})f'(v_{n}) \Big|^{2N/(N+2)} \leq C\int_{\Omega}|v_{n}|^{2N(2^{*} -1)/(N+2)}
=\int_{\Omega} |v_{n}|^{2^{*}} \leq C'.$$
Then there exists some $w\in L^{2N/(N+2)}(\Omega)$ such that, up to subsequence,
$$|f(v_{n})|^{22^{*} - 2} f(v_{n}) f'(v_{n}) \rightharpoonup  w
 \quad \text{ in } \quad L^{2N/(N+2)}(\Omega).$$
 But it is easy to see, due to the unicity of the weak limit,  that
$$|f(v_{n})|^{22^{*} - 2} f(v_{n}) f'(v_{n}) \rightharpoonup   |f(v)|^{22^{*} - 2}f(v) 
f'(v) \quad \text{ in } \quad L^{2N/(N+2)}(\Omega)$$
(note that $2N/(N+2) = (2^{*})'$).
This allows to conclude that, for every $\varphi\in H^{1}_{0}(\Omega), I_{*}'(v_{n})[\varphi]  \to I_{*}(v)[\varphi]$ and then,
since $I'_{*}(v_{n})\to 0$, we conclude that $v$ is a critical point of $I_{*}$.

\medskip

Now, by the  Brezis-Lieb  splitting  \eqref{eq:1BL},
 we have
\begin{eqnarray*}\label{eq:convergenza}
I_{*}(v_{n}) &=&\frac{1}{2}\int_{\Omega}|\nabla w_{n} + \nabla v|^{2} -\frac{1}{22^{*}}\int_{\Omega}|f(w_{n}+v)|^{22^{*}} \nonumber\\
&=& \frac{1}{2}\int_{\Omega}|\nabla w_{n}|^{2} +\int_{\Omega}\nabla w_{n} \nabla v +\frac{1}{2}\int_{\Omega}|\nabla v|^{2} -\frac{1}{22^{*}}\int_{\Omega}|f(v)|^{22^{*}}
-\frac{1}{22^{*}} \int_{\Omega}|f(w_{n})|^{22^{*}} +o_{n}(1) \nonumber \\
&=&I_{*}(v) +I_{*}(w_{n}) +o_{n}(1).
\end{eqnarray*}
Moreover since $w_{n}\rightharpoonup 0$ in $H^{1}_{0}(\Omega)$
and  
$$|f(w_{n})|^{22^{*} - 2} f(w_{n}) f'(w_{n}) \rightarrow 0 \quad \mbox{ in } \ H^{-1}(\Omega)$$
we deduce that
$$\|I_{*}'(w_{n})\| = 
\sup_{\|\varphi\|=1} \Big| \int_{\Omega} \nabla w_{n} \nabla \varphi - \int_{\Omega}|f(w_{n})|^{22^{*}-2} f(w_{n})f'(w_{n})\varphi\Big| \rightarrow 0$$
concluding the proof.
\end{proof}

Then we have the local $(PS)$ condition for the functional $I_{*}$.

\begin{proposition}\label{lem:PS}
The functional $I_{*}$ satisfies the $(PS)$ condition at level $d\in \mathbb R$, for $$d<\frac{1}{N}\Big(\frac{S}{2}\Big)^{N/2}.$$
\end{proposition}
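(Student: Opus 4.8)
The standard strategy for proving a local Palais-Smale condition in critical problems is to show that, below the critical threshold, any concentration of the mass carried by the "residual" sequence $w_n$ is forbidden. The plan is: given a $(PS)_d$ sequence $\{v_n\}$ with $d$ below the stated level, apply Lemma \ref{lem:limitacaoPS} to extract $v_n\rightharpoonup v$ with $I_*'(v)=0$, and set $w_n:=v_n-v$, which is then a $(PS)$ sequence for $I_*$ at level $d-I_*(v)$ with $w_n\rightharpoonup 0$. The goal is to prove $w_n\to 0$ strongly in $H^1_0(\Omega)$, which then gives $v_n\to v$ and closes the condition.

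\medskip

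First I would establish the energy lower bound coming from the fact that $v$ is itself a (possibly trivial) solution in $\mathcal{N}_*\cup\{0\}$, so $I_*(v)\geq 0$ (using the same computation as in Lemma \ref{lemmanehari}\eqref{lemmanehariiii}, valid at $p=22^*$). Hence the level for $\{w_n\}$ satisfies $d-I_*(v)\leq d < \frac1N(S/2)^{N/2}$. Next, since $I_*'(w_n)\to 0$ and $w_n\rightharpoonup 0$, I would use $I_*'(w_n)[w_n]=o_n(1)$ together with the inequality $\eqref{eq:util}$, namely $f(w_n)^2/2\le f'(w_n)f(w_n)w_n\le f(w_n)^2$, to sandwich $\|w_n\|^2$. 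Passing to a subsequence, set $\|w_n\|^2\to \ell\geq 0$ and $\int_\Omega |f(w_n)|^{22^*}\to b\geq0$. From $I_*'(w_n)[w_n]=\|w_n\|^2-\int_\Omega |f(w_n)|^{22^*-2}f(w_n)f'(w_n)w_n\to 0$ and the sandwich, one gets $b/2\le \ell\le b$. If $\ell=0$ we are done, so suppose $\ell>0$ (hence $b>0$). The Sobolev inequality plus the pointwise bound $|f(t)|^2\le \sqrt2\,|t|$ (Lemma \ref{Lema f}\eqref{Lemafv}) gives
\[
b=\lim_n\int_\Omega |f(w_n)|^{22^*}\le \sqrt2\,\limsup_n\int_\Omega |w_n|^{2^*}\le \sqrt2\, S^{-2^*/2}\limsup_n\|w_n\|^{2^*} ,
\]
hmm — this needs care because $|f(w_n)|^{22^*}=(|f(w_n)|^2)^{2^*}\le (\sqrt2|w_n|)^{2^*}=2^{2^*/2}|w_n|^{2^*}$, so in fact $b\le 2^{2^*/2}S^{-2^*/2}\ell^{2^*/2}$. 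Combined with $\ell\le b$ this yields $\ell\le 2^{2^*/2}S^{-2^*/2}\ell^{2^*/2}$, i.e. $\ell^{1-2^*/2}\le (2/S)^{2^*/2}$, which rearranges to $\ell\ge (S/2)^{2^*/(2^*-2)}=(S/2)^{N/2}$.

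\medskip

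Finally I would compute the energy of the tail. Since $I_*'(w_n)[w_n]\to 0$,
\[
I_*(w_n)=I_*(w_n)-\tfrac12 I_*'(w_n)[w_n]+o_n(1)= \Big(\tfrac12-\tfrac1{22^*}\Big)\int_\Omega |f(w_n)|^{22^*}+o_n(1)+\text{(lower order)},
\]
more precisely using $\eqref{eq:util}$ one bounds $I_*(w_n)\ge \frac1N\int_\Omega|f(w_n)|^{22^*-2}f(w_n)f'(w_n)w_n+o_n(1)$, which together with $\int_\Omega|f(w_n)|^{22^*-2}f(w_n)f'(w_n)w_n=\|w_n\|^2+o_n(1)\to\ell$ gives $\liminf_n I_*(w_n)\ge \ell/N\ge \frac1N(S/2)^{N/2}$. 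Since $I_*(w_n)\to d-I_*(v)\le d$, this contradicts $d<\frac1N(S/2)^{N/2}$. Hence $\ell=0$, so $w_n\to0$ strongly, and $v_n\to v$ in $H^1_0(\Omega)$, which is exactly the $(PS)_d$ condition.

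\medskip

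The main obstacle I anticipate is handling the change of variable $f$ carefully in two places: (i) getting the \emph{correct} Sobolev-type bound on $\int |f(w_n)|^{22^*}$ — one must use $|f(t)|^{22^*}=(|f(t)|^2)^{2^*}\le (\sqrt 2|t|)^{2^*}$ so that the threshold comes out as $(S/2)^{N/2}$ rather than $S^{N/2}$ — and (ii) making sure the "lower order terms" in the splitting of $I_*(w_n)-\frac12 I_*'(w_n)[w_n]$ (those involving $f(w_n)^2(f'(w_n))^3 w_n$ etc., as in the Nehari computation) are genuinely nonnegative or $o_n(1)$, via Lemma \ref{Lema f}\eqref{Lemafvi},\eqref{Lemafix}. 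Everything else is the textbook Brezis–Nirenberg concentration argument, using the splitting Lemma \ref{lem:limitacaoPS} already proved.
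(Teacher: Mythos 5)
Your argument is correct and follows essentially the same route as the paper: extract the weak limit via Lemma \ref{lem:limitacaoPS}, use $I_*'(w_n)[w_n]=o_n(1)$ and the sandwich \eqref{eq:util} to get $B/2\le A\le B$ for the limits of $\|w_n\|^2$ and $\int_\Omega|f(w_n)|^{22^*}$, deduce $A\ge (S/2)^{N/2}$ unless $A=0$, and contradict $d<\frac1N(S/2)^{N/2}$ through $\liminf I_*(w_n)\ge A/N$. The only (immaterial) difference is that you obtain the threshold by applying Sobolev to $w_n$ together with the pointwise bound $|f(t)|^2\le\sqrt2\,|t|$, whereas the paper applies Sobolev to $f(w_n)^2$ and uses $|f f'|\le 2^{-1/2}$ on the gradient; both yield the same constant.
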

\begin{proof}
Let $\{v_{n}\}$ be a $(PS)_{d}$ sequence for $I_{*}$. We know that $v_{n}\rightharpoonup v$
in $H^{1}_{0}(\Omega) , I'_{*}(v)=0$ and $I_{*}(v)\geq0$.

By defining  $w_{n}: =v_{n} -v $, and using that $\int_{\Omega}|f(w_{n})|^{22^{*}} \leq C \int_{\Omega}|\nabla w_{n}|^{2}$ we  have
\begin{equation}\label{eq:bhatb}
\int_{\Omega}|\nabla w_{n}|^{2} \to A\geq0, \qquad \int_{\Omega} |f(w_{n})|^{22^{*}}\to  B\geq0.
\end{equation}
All that we need to show is that $A=0$.

By using  the Brezis-Lieb splitting \eqref{eq:2BL}
 we have
\begin{equation*}
I_{*}'(w_{n})[w_{n}] = I_{*}'(v_{n})[v_{n}] - I'_{*}(v)[v]+ o_{n}(1) = o_{n}(1),
\end{equation*}
which explicitly is
\begin{equation}\label{eq:convergencia}
\int_{\Omega}|\nabla w_{n}|^{2} - \int_{\Omega}|f(w_{n})|^{22^{*} -2 }f(w_{n})f'(w_{n})w_{n} =o_{n}(1).
\end{equation}
So, in virtue of \eqref{eq:convergencia}, we deduce
\begin{equation}\label{eq:deduce}
\int_{\Omega}|f(w_{n})|^{22^{*} -2 }f(w_{n})f'(w_{n})w_{n} \longrightarrow A.
\end{equation}
Then 3. of Lemma \ref{lem:limitacaoPS} and   \eqref{eq:convergenza} imply
\begin{equation}\label{eq:Ibd}
d=I_{*}(v)+\frac{A}{2}-\frac{B}{22^{*}}\geq \frac{A}{2}-\frac{B}{22^{*}}.
\end{equation}
By \eqref{Lemafvi} of Lemma \ref{Lema f} it holds 
$$\frac{1}{2}\int_{\Omega}|f(w_{n})|^{22^{*}} \leq \int_{\Omega}|f(w_{n})|^{22^{*}-2}
f(w_{n})f'(w_{n})w_{n} \leq \int_{\Omega}|f(w_{n})|^{22*} $$
so that, by \eqref{eq:bhatb} and \eqref{eq:deduce},
\begin{equation}\label{eq:hatb}
 \frac{1}{2} B \leq  A \leq B.
\end{equation}
Then, coming back to \eqref{eq:Ibd} we infer
\begin{eqnarray}\label{eq:b*}
\frac{1}{N} A =\frac{A}{2} - \frac{A}{2^{*}} \leq d.
\end{eqnarray}
Now, by the Sobolev inequality applied to $f(w_{n})^{2}$ and  \eqref{Lemafix} of Lemma \ref{Lema f} we get
\begin{equation*}
S \Big(\int_{\Omega} |f(w_{n})|^{22^{*}}\Big)^{2/2^{*}}\leq\int_{\Omega}|\nabla f(w_{n})^{2}|^{2} = \int_{\Omega}|2f(w_{n}) f'(w_{n}) \nabla w_{n}|^{2} \leq2 \int_{\Omega}|\nabla w_{n}|^{2}
\end{equation*}
and then, passing to the limit and making use of \eqref{eq:hatb}, we arrive at
$$  \frac{S}{2}A^{2/2^{*}} \leq \frac{S}{2}{B }^{2/2^{*}}  \leq A. $$
If it were $A>0$, then we deduce
\begin{equation*}\label{eq:bS}
\Big(\frac{S}{2}\Big)^{N/2}\leq A.
\end{equation*}
But then using \eqref{eq:b*} we get
\begin{equation*}\label{eq:d}
\frac{1}{N} \Big( \frac{S}{2}\Big)^{N/2}\leq \frac{1}{N}A \leq d < \frac{1}{N} \Big( \frac{S}{2}\Big)^{N/2}
\end{equation*}
and this contradiction implies that $A=0$, concluding the proof.
\end{proof}

\subsection{A global compactness result}
In  order to prove our multiplicity results we need to deal 
with another ``limit'' functional, now related to the 
critical problem in the whole $\mathbb R^{N}$.

Let us  introduce the space $D^{1,2}(\mathbb{R}^{N})=\left\{u\in L^{2^{*}}
(\mathbb{R}^{N}): |\nabla u|\in L^{2}(\mathbb R^{N})\right\}$ which can also be characterized as the
closure of $C^{\infty}_{0}(\mathbb{R}^{N})$ with respect to the (squared)
norm
$$\|u\|^{2}_{D^{1,2}(\mathbb{R}^{N})}=\int_{\mathbb{R}^{N}} |\nabla u|^{2}.$$
A function in  $H^{1}_{0}(\Omega)$ can be thought as an element of $D^{1,2}(\mathbb{R}^{N})$.

Let us define the functional 
\begin{equation*}\label{eq:IRN}
\widehat I(v) = \frac{1}{2}\int_{\mathbb R^{N}}|\nabla v|^{2} - \frac{1}{22^{*}} \int_{\mathbb R^{N}} |f(v)|^{22^{*}}
\end{equation*}
whose critical points are the weak solutions of
\begin{equation}\label{eq:problRN}
\left\{
\begin{array}[c]{ll}
-\Delta u =|f(v)|^{22^{*}-2}f(v)f'(v)\medskip  \\ 
v\in D^{1,2}(\mathbb R^{N}).
\end{array}
\right.
\end{equation}
Setting as usual
$$\widehat{\mathcal N} = \left\{ v\in D^{1,2}(\mathbb R^{N})\setminus\{0\} : \widehat{I}' (v)[v] = 0\right\},$$
all the solutions of \eqref{eq:problRN} are in $\widehat{\mathcal N}$; it is a differentiable manifold,
is bounded away from zero,
and
$$ \widehat{m}: = \inf_{v\in \widehat{\mathcal N}} \widehat I(v)>0.$$
The proof of these facts is exactly as in \eqref{lemmaneharii}-\eqref{lemmanehariiii} of Lemma  \ref{lemmanehari}.

\medskip

As a matter of notation, in the rest of the paper given 
a function $z\in D^{1,2}(\mathbb R^{N}), \xi\in \mathbb R^{N}$
and $R>0$, we define the {\sl conformal rescaling}  $z_{R,\xi}$ as
\begin{equation}\label{eq:conformal}
z_{R, \xi} (x):= R^{N/2^{*}} z(R(x-\xi)).
\end{equation}
Of course $\|z\|_{D^{1,2}(\mathbb R^{N})}=\|z_{R,\xi}\|_{D^{1,2}(\mathbb R^{N})}.$

We need the following important Lemma
whose prove is omitted since it is like in \cite[Lemma 3.2]{Str}.
Note that the conclusion of item $(e)$
simply follows by Proposition \ref{lem:PS}.
\begin{lemma}\label{lem:preliminare}
Let $\{w_{n}\}$ be  a $(PS)_{\beta}$ sequence for $I_{*}$ 
such that $w_{n}\rightharpoonup 0$ in $H^{1}_{0}(\Omega)$.
Then there exist sequences $\{x_{n}\}\subset \Omega, \{R_{n}\}\subset (0,+\infty)$ with $R_{n}\to +\infty$, 
and a nontrivial solution $\widehat v$ of \eqref{eq:problRN} 
such that, up to subsequences,
\begin{itemize}
\item[(a)] $\widehat w_{n}: =w_{n} -\widehat v_{R_{n} , x_{n}} +o_{n}(1)$
is a $(PS)$ sequence for $I_{*}$ in $H^{1}_{0}(\Omega)$, \medskip
\item[(b)] $\widehat w_{n} \rightharpoonup 0 $ in $H^{1}_{0}(\Omega)$, \medskip
\item[(c)] $I_{*}(\widehat w_{n}) = I_{*}(w_{n})  -\widehat I(\widehat v) +o_{n}(1)$, \medskip
\item[(d)]$R_{n}d(x_{n}, \partial\Omega) \to+\infty $, \medskip
\item[(e)] if $\beta<\beta^{*} := \frac{1}{N}\Big(\frac{S}{2}\Big)^{N/2}$ then $\{w_{n}\}$ is relatively compact;
in particular $w_{n} \to 0$ in $H^{1}_{0}(\Omega)$ and $I_{*}(w_{n}) \to \beta= 0$.
\end{itemize}
\end{lemma}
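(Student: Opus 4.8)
The statement to prove is Lemma~\ref{lem:preliminare}, the global compactness (Struwe-type) decomposition for a single bubble, for the critical functional $I_*$ attached to the quasilinear problem after the change of variable $f$. The plan is to follow the classical concentration--compactness / Struwe blow-up scheme, using the reference \cite[Lemma 3.2]{Str} as a template, but being careful at every point where the nonlinearity $|f(v)|^{22^*-2}f(v)f'(v)$ enters in place of the pure power $|v|^{2^*-2}v$. The crucial structural fact that makes this possible is that $f$ behaves like $\sqrt{\,\cdot\,}$ at infinity (item \eqref{Lemafvii} of Lemma~\ref{Lema f}), so that $f(v)^2$ plays the role of the ``critical variable'': the Sobolev inequality is applied to $f(w_n)^2$, exactly as already done in the proof of Proposition~\ref{lem:PS}.

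First I would recall, from Lemma~\ref{lem:limitacaoPS}, that any $(PS)_\beta$ sequence $\{w_n\}$ for $I_*$ is bounded, so up to a subsequence $w_n \rightharpoonup 0$ (here the weak limit is assumed to be $0$); if $\beta < \beta^*$ then Proposition~\ref{lem:PS} gives strong convergence $w_n \to 0$ and $\beta = 0$, which is exactly item $(e)$. So the real content is the case $\beta \ge \beta^*$, in which $w_n \not\to 0$ strongly. Following Struwe, I would introduce the concentration function: set $Q_n(r) = \sup_{x\in\mathbb R^N}\int_{B_r(x)} |f(w_n)|^{22^*}$ (extending $w_n$ by zero outside $\Omega$), choose a fixed small $\delta_0>0$ (depending only on $S$, $N$), and pick $R_n>0$ and $x_n\in\overline\Omega$ so that $\int_{B_{1/R_n}(x_n)}|f(w_n)|^{22^*} = \delta_0$. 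Because $w_n\to 0$ in $L^s_{loc}$ for $s<2^*$ and $|f(w_n)|\le 2^{1/4}|w_n|^{1/2}$, one checks $R_n\to+\infty$. Then define the rescaled sequence $\widetilde w_n(x) := R_n^{-N/2^*} w_n(R_n^{-1}x + x_n)$ (the inverse of the conformal rescaling \eqref{eq:conformal}); $\{\widetilde w_n\}$ is bounded in $D^{1,2}(\mathbb R^N)$, so $\widetilde w_n \rightharpoonup \widehat v$ for some $\widehat v\in D^{1,2}(\mathbb R^N)$, and the normalization of the concentration function guarantees $\widehat v\neq 0$. A change of variables in the Euler--Lagrange equation, using the scale-invariance $\|z\|_{D^{1,2}} = \|z_{R,\xi}\|_{D^{1,2}}$ and that the nonlinear term $|f(v)|^{22^*-2}f(v)f'(v)v$ scales correctly because $f(v)^2$ is $2^*$-homogeneous in the relevant sense, shows $\widehat v$ solves \eqref{eq:problRN}. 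Item $(d)$, $R_n d(x_n,\partial\Omega)\to+\infty$, comes from the standard argument that if $R_n d(x_n,\partial\Omega)$ stayed bounded, then after rescaling $\widehat v$ would live on a half-space with zero Dirichlet datum, and a Pohozaev-type identity (analogous to Lemma~\ref{lem:nonex}, using $f(v)f'(v)v \ge f(v)^2/2$) would force $\widehat v\equiv 0$, a contradiction.

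Next I would set $\widehat w_n := w_n - \widehat v_{R_n,x_n} + o_n(1)$ (the $o_n(1)$ absorbs a cutoff needed to keep everything in $H^1_0(\Omega)$; by item $(d)$ the bubble $\widehat v_{R_n,x_n}$ is asymptotically supported well inside $\Omega$, so a suitable truncation costs only a vanishing $H^1_0$-term). The splitting of the $D^{1,2}$-norm, $\|\widehat w_n\|^2 = \|w_n\|^2 - \|\widehat v\|^2_{D^{1,2}} + o_n(1)$, is the Brezis--Lieb lemma for gradients; the splitting of the nonlinear term, $\int_\Omega |f(\widehat w_n)|^{22^*} = \int_\Omega|f(w_n)|^{22^*} - \int_{\mathbb R^N}|f(\widehat v_{R_n,x_n})|^{22^*} + o_n(1)$, is a Brezis--Lieb statement for $|f(\cdot)|^{22^*}$ after rescaling --- essentially \eqref{eq:1BL} transplanted to the rescaled functions, since $|f(\cdot)|^{22^*}$ is a continuous function vanishing at $0$ with the right growth. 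These give item $(c)$, $I_*(\widehat w_n) = I_*(w_n) - \widehat I(\widehat v) + o_n(1)$, and item $(b)$, $\widehat w_n\rightharpoonup 0$ (the bubble escapes to infinite frequency, so $\widehat v_{R_n,x_n}\rightharpoonup 0$ in $H^1_0(\Omega)$). For item $(a)$, that $\widehat w_n$ is again a $(PS)$ sequence for $I_*$, I would argue as in Struwe: $I_*'(\widehat w_n)\to 0$ in $H^{-1}(\Omega)$ follows by testing against $\varphi\in H^1_0(\Omega)$, using $I_*'(w_n)\to 0$, $\widehat I'(\widehat v)=0$ (so the rescaled bubble is an approximate critical point), and a second Brezis--Lieb-type cancellation for the nonlinear term --- here the bounded-domain refinement \eqref{eq:2BL} (allowing $L^1$ rather than $L^\alpha$) is the right tool, again transplanted through the rescaling.

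I expect the main obstacle to be exactly the bookkeeping of the nonlinear term through the conformal rescaling: one must verify that $\int_{\mathbb R^N} |f(\widehat v_{R_n,x_n})|^{22^*}$ converges to $\int_{\mathbb R^N}|f(\widehat v)|^{22^*}$ and, more delicately, that the \emph{derivative} term $|f(\widehat v_{R_n,x_n})|^{22^*-2}f(\widehat v_{R_n,x_n})f'(\widehat v_{R_n,x_n})$ behaves like the pure-power bubble's nonlinearity in the limit. Since $f$ is not homogeneous, the rescaling $z\mapsto z_{R,\xi}$ does not exactly commute with $v\mapsto |f(v)|^{22^*-2}f(v)f'(v)$; one controls the discrepancy using $f(t)/\sqrt t\to a>0$ and the monotonicity estimates of Lemma~\ref{lem:outras} (in particular items \eqref{outrasv} and \eqref{outrasvii}), showing the error terms are $o_n(1)$ in the relevant norms. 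This is precisely the kind of delicate handling of $f$ that the introduction flagged as the reason the Benci--Cerami scheme is not ``immediately applicable'', and it is why the authors say this representation of $(PS)$ sequences has never appeared before; but once the growth and monotonicity lemmas for $f$ are in hand, each step reduces to the classical Struwe argument. For this reason the detailed calculation is legitimately omitted with a pointer to \cite[Lemma 3.2]{Str}, with the understanding that $f(v)^2$ is systematically substituted for the critical power.
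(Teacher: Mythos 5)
Your overall scheme is the same one the paper points to (the proof is omitted there with a reference to Struwe's Lemma 3.2, plus the observation that item $(e)$ follows from Proposition \ref{lem:PS}, which you also make). But there is a genuine gap at the single most delicate step, and it is exactly the one you try to wave through: the identification of the blow-up profile. You assert that ``the nonlinear term $|f(v)|^{22^{*}-2}f(v)f'(v)v$ scales correctly'' so that the weak limit $\widehat v$ of the rescaled sequence solves \eqref{eq:problRN}. It does not scale correctly. Writing $\widetilde w_{n}(x)=R_{n}^{-N/2^{*}}w_{n}(x/R_{n}+x_{n})$ and $g(t)=|f(t)|^{22^{*}-2}f(t)f'(t)$, the rescaled equation reads $-\Delta\widetilde w_{n}=R_{n}^{-(N+2)/2}\,g\bigl(R_{n}^{(N-2)/2}\widetilde w_{n}\bigr)+o_{n}(1)$, and since $f(t)\sim 2^{1/4}\sqrt{t}$ at infinity (item \eqref{Lemafvii} of Lemma \ref{Lema f}) one has $R_{n}^{-(N+2)/2}g(R_{n}^{(N-2)/2}s)\to 2^{2^{*}/2-1}|s|^{2^{*}-2}s$, not $g(s)$. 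So the limit equation is the pure-power equation $-\Delta V=2^{2^{*}/2-1}|V|^{2^{*}-2}V$, i.e.\ an Aubin--Talenti bubble up to a constant, and \emph{not} \eqref{eq:problRN}. The discrepancy between $g$ and its asymptotic power is not an $o_{n}(1)$ perturbation of the same limit problem --- it changes the limit PDE. Worse, the target you aim at is empty: the Pohozaev identity on $\mathbb R^{N}$ (the same computation as in Lemma \ref{lem:nonex}) forces $\int|f(v)|^{22^{*}-2}f(v)f'(v)v=\tfrac12\int|f(v)|^{22^{*}}$ for any $D^{1,2}$ solution of \eqref{eq:problRN}, while $f'(t)t>f(t)/2$ strictly for $t\neq0$; hence \eqref{eq:problRN} has no nontrivial finite-energy solution, and no blow-up analysis can produce one.

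The same non-invariance infects your item $(c)$: the energy carried off by the bubble is $\lim_{n}\widehat I(\widehat v_{R_{n},x_{n}})=\tfrac12\|\widehat v\|_{D^{1,2}}^{2}-\tfrac{2^{2^{*}/2}}{22^{*}}\int_{\mathbb R^{N}}|\widehat v|^{2^{*}}$ (again by $f(t)^{22^{*}}\sim 2^{2^{*}/2}t^{2^{*}}$ under the rescaling \eqref{eq:conformal}), which is the conformally invariant limit energy of $\widehat v$ and in general differs from $\widehat I(\widehat v)$. To make the lemma true and the proof close, the profile must be declared a nontrivial solution of the asymptotic problem $-\Delta V=2^{2^{*}/2-1}|V|^{2^{*}-2}V$ in $D^{1,2}(\mathbb R^{N})$ and $\widehat I$ in item $(c)$ must be replaced by that problem's energy functional; the rest of your outline (concentration function, choice of $R_{n},x_{n}$, Brezis--Lieb splittings via \eqref{eq:1BL}--\eqref{eq:2BL}, the half-space exclusion for item $(d)$) then goes through along standard lines. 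To be fair, this defect is inherited from the statement of the lemma itself, which the paper does not prove; but your proof as written hinges on a scaling invariance that the nonlinearity does not possess, and that step fails.
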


Now we can prove the following ``splitting lemma'', which is   useful
to study the behaviour of the $(PS)$ sequences
for the limit functional $I_{*}$ related to the critical problem in the domain $\Omega$.

In particular it says that, if the $(PS)$ sequences does not converges 
strongly to their weak limit, then this is due to the solutions of the  problem
in the whole $\mathbb R^{N}.$

\begin{lemma}[Splitting]\label{lem:splitting} 
Let $\{v_{n}\}\subset H^{1}_{0}(\Omega)$ be a $(PS)$ sequence for the functional
$I_{*}$. 
 Then either $\{v_{n}\}$ is convergent in $H^{1}_{0}(\Omega)$, or 
 there exist 
\begin{itemize}
\item[i.] a solution $v_{0}\in H^{1}_{0}(\Omega)\subset D^{1,2}(\mathbb R^{N})$ of problem \eqref{star}, \smallskip
\item[ii.] a number $k\in \mathbb N, k$ sequences of points $\{x^{j}_{n}\}\subset \Omega$
and $k$ sequences of radii $\{R_{n}^{j}\}$ with $R^{j}_{n}\to +\infty$, where $j=1,\ldots, k$, \smallskip
\item[iii.] nontrivial solutions $\{v^{j}\}_{j=1,\ldots, k}\subset D^{1,2}(\mathbb R^{N})$ of problem \eqref{eq:problRN} 
\end{itemize} 
such that, up to subsequences,
\begin{eqnarray}\label{eq:}
&&v_{n} - v_{0} =\sum_{j=1}^{k} v_{R_{n}^{j}, x_{n}^{j}}^{j} +o_{n}(1) \quad \text{ in }\quad D^{1,2}(\mathbb R^{N}) 
\label{eq:approxv0}\\
&&I_{*}(v_{n}) = I_{*}(v_{0})+\sum_{j=1}^{k} \widehat I(v^{j})+o_{n}(1) \label{eq:approxI}.
\end{eqnarray}
\end{lemma}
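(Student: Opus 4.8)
The plan is to argue by an iteration (or induction) on the number of ``bubbles'' extracted, using Lemma \ref{lem:limitacaoPS} as the first step and Lemma \ref{lem:preliminare} as the inductive step, with the $(PS)$ level playing the role of a monotone quantity that decreases by a fixed amount $\widehat m>0$ at each stage, thereby forcing the procedure to terminate after finitely many steps. First I would apply Lemma \ref{lem:limitacaoPS} to the given $(PS)$ sequence $\{v_{n}\}$ for $I_{*}$: up to a subsequence $v_{n}\rightharpoonup v_{0}$ in $H^{1}_{0}(\Omega)$, $v_{0}$ solves \eqref{star}, and setting $w_{n}^{1}:=v_{n}-v_{0}$ we have $w_{n}^{1}\rightharpoonup 0$, $I_{*}(v_{n})=I_{*}(v_{0})+I_{*}(w_{n}^{1})+o_{n}(1)$, and $\{w_{n}^{1}\}$ is a $(PS)$ sequence for $I_{*}$ at some level $\beta_{1}:=d-I_{*}(v_{0})\geq 0$. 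If $\beta_{1}<\beta^{*}=\frac1N(S/2)^{N/2}$, then by item $(e)$ of Lemma \ref{lem:preliminare} (equivalently by Proposition \ref{lem:PS}, noting $I_{*}(v_{0})\geq 0$) we get $w_{n}^{1}\to 0$ strongly, hence $v_{n}\to v_{0}$ in $H^{1}_{0}(\Omega)$ and we are in the first alternative of the statement with $k=0$.

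Otherwise $\beta_{1}\geq\beta^{*}>0$, and I would invoke Lemma \ref{lem:preliminare} applied to $\{w_{n}^{1}\}$: it produces sequences $\{x_{n}^{1}\}\subset\Omega$, $\{R_{n}^{1}\}$ with $R_{n}^{1}\to+\infty$, a nontrivial solution $v^{1}\in D^{1,2}(\mathbb R^{N})$ of \eqref{eq:problRN}, and a new $(PS)$ sequence $w_{n}^{2}:=w_{n}^{1}-v^{1}_{R_{n}^{1},x_{n}^{1}}+o_{n}(1)$ for $I_{*}$ with $w_{n}^{2}\rightharpoonup 0$ and $I_{*}(w_{n}^{2})=I_{*}(w_{n}^{1})-\widehat I(v^{1})+o_{n}(1)$, so that its level is $\beta_{2}:=\beta_{1}-\widehat I(v^{1})$. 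Since $v^{1}\in\widehat{\mathcal N}$ we have $\widehat I(v^{1})\geq\widehat m>0$; moreover $\widehat m\geq\beta^{*}$ (this follows from the Sobolev-type estimate applied to $f(v)^{2}$ exactly as in the proof of Proposition \ref{lem:PS}, so I would record it as a short observation). Now I would repeat: if $\beta_{2}<\beta^{*}$ the procedure stops with $w_{n}^{2}\to 0$ and $k=1$; if not, apply Lemma \ref{lem:preliminare} again to extract $v^{2}$, $\{x_{n}^{2}\}$, $\{R_{n}^{2}\}$, and so on. At step $j$ we have subtracted $j$ bubbles and the level has dropped to $\beta_{j+1}=d-I_{*}(v_{0})-\sum_{i=1}^{j}\widehat I(v^{i})\leq d-I_{*}(v_{0})-j\,\widehat m$. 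Since each $\beta_{j}\geq 0$ as long as the procedure has not terminated, we must have $j\leq (d-I_{*}(v_{0}))/\widehat m$, so the iteration stops after a finite number $k$ of steps. Telescoping the energy identities gives \eqref{eq:approxI}, and telescoping the function decompositions $w_{n}^{j+1}=w_{n}^{j}-v^{j}_{R_{n}^{j},x_{n}^{j}}+o_{n}(1)$ together with $v_{n}-v_{0}=w_{n}^{1}$ and the fact that $w_{n}^{k+1}\to 0$ yields \eqref{eq:approxv0} in $D^{1,2}(\mathbb R^{N})$.

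The delicate points I expect are the following. First, one must be slightly careful that the $o_{n}(1)$ terms appearing in each application of Lemma \ref{lem:preliminare} (both in the definition of $\widehat w_{n}$ and in the energy splitting) accumulate in a controlled way; since there are only finitely many steps this is harmless, but it should be stated explicitly. Second, the fact that the bubbles $v^{j}$ are genuinely nontrivial and carry energy bounded below away from zero is what makes the argument terminate, so the inequality $\widehat I(v^{j})\geq\widehat m>0$ is the true engine of the proof and deserves a line; this is where I would insert the remark that $\widehat m\geq\beta^{*}$, obtained from $S\big(\int|f(v)|^{22^{*}}\big)^{2/2^{*}}\leq\int|\nabla f(v)^{2}|^{2}\leq 2\int|\nabla v|^{2}$ combined with $\int|f(v)|^{22^{*}}\leq\int|f(v)|^{22^{*}-2}f(v)f'(v)v=\int|\nabla v|^{2}$ valid on $\widehat{\mathcal N}$. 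Third, the addition of weak limits/rescalings must be interpreted in $D^{1,2}(\mathbb R^{N})$ rather than $H^{1}_{0}(\Omega)$, since the rescaled bubbles $v^{j}_{R_{n}^{j},x_{n}^{j}}$ need not vanish on $\partial\Omega$; this is why the statement phrases \eqref{eq:approxv0} in $D^{1,2}(\mathbb R^{N})$, and item $(d)$ of Lemma \ref{lem:preliminare}, $R_{n}^{j}d(x_{n}^{j},\partial\Omega)\to+\infty$, is exactly what guarantees these bubbles are asymptotically supported inside $\Omega$ so that the decomposition is consistent. I do not expect any single step to be a serious obstacle given Lemmas \ref{lem:limitacaoPS} and \ref{lem:preliminare}; the main work is bookkeeping the finite induction cleanly.
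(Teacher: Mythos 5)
Your argument is correct and follows essentially the same route as the paper: a first application of Lemma \ref{lem:limitacaoPS} to split off the weak limit $v_{0}$, followed by an iteration of Lemma \ref{lem:preliminare} whose termination comes from the fact that each extracted bubble costs at least $\widehat m>0$ of energy while the total level stays bounded. The only (harmless) slip is in your side remark that $\widehat m\geq\beta^{*}$: the inequality you quote should read $\int_{\mathbb R^{N}}|\nabla v|^{2}=\int_{\mathbb R^{N}}|f(v)|^{22^{*}-2}f(v)f'(v)v\leq\int_{\mathbb R^{N}}|f(v)|^{22^{*}}$ (the reverse of what you wrote), but that observation is not needed anyway, since the paper's termination argument uses only $\widehat m>0$.
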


\begin{proof}
We already know (see Lemma \ref{lem:limitacaoPS}) that $\{v_{n}\}$ is bounded and then we can assume that $v_{n}\rightharpoonup v_{0}$
in $H^{1}_{0}(\Omega), v_{0}$ is a weak solution of \eqref{star} and $|I_{*}(v_{n})| \leq C.$
Assume that $\{v_{n}\}$ does not converges strongly to $v_{0}$.


\medskip

Let $w_{n}^{1}: = v_{n} - v_{0} \rightharpoonup 0$. Then by Lemma \ref{lem:limitacaoPS},
$\{w_{n}^{1}\}$ is a $(PS)$ sequence for $I_{*}$  and
\begin{equation}\label{eq:PS1}
I_{*}(v_{n}) = I_{*}(v_{0})+I_{*}(w_{n}^{1}) +o_{n}(1).
\end{equation}

By Lemma \ref{lem:preliminare} applied to $\{w_{n}^{1}\}$,
we get the existence of sequences $\{x_{n}^{1}\}\subset \Omega, \{R_{n}^{1}\}\subset(0,+\infty)$ 
with $R_{n}^{1} \to +\infty$ and $v^{1}\in D^{1,2}(\mathbb R^{N})$ solution of \eqref{star}, such that
\begin{itemize}
\item[(1a)] defining $ w_{n}^{2}:=w_{n}^{1} - v^{1}_{R_{n}^{1} , x_{n}^{1}} +o_{n}(1)$
with $o_{n}(1)\to 0$ in $D^{1,2}(\mathbb R^{N})$, and $\{w_{m}^{2}\}$ is a $(PS)$
 sequence for $I_{*}$, \medskip
\item[(1b)] $w^{2}_{n}\rightharpoonup 0$ in $H^{1}_{0}(\Omega)$, \medskip
\item[(1c)] $I_{*}(w_{n}^{2}) = I_{*} (w_{n}^{1}) - \widehat I(v^{1}) + o_{n}(1)$, \medskip
\item[(1d)] $R_{n}^{1} d(x_{n}^{1} , \partial \Omega) \to +\infty$, \medskip
\item[(1e)] if $I_{*}(w_{n}^{1}) \to \beta<\beta^{*}$, then $\{w_{n}^{1}\}$ is relatively compact;
in particular $w_{n}^{1}\to 0$ in $H^{1}_{0}(\Omega)$ and $ I_{*}(w_{n}^{1})\to 0$.
\end{itemize}
Then by  (1c) equation \eqref{eq:PS1} becomes
\begin{equation}\label{eq:inducao1}
I_{*}(v_{n}) = I_{*}(v_{0}) + I_{*}(w_{n}^{2}) +\widehat I(v^{1}) +o_{n}(1).
\end{equation}
Note that, by definitions, $w_{n}^{2} = v_{n} - v_{0} -v^{1}_{R_{n}^{1}, x_{n}^{1}}+o_{n}(1)$.
Hence, if $\{w_{n}^{2}\}$  is strongly convergent to zero, the Theorem is proved with $k=1$.
Otherwise,  in virtue of $(1a)$ and $(1b)$, we can apply  Lemma \ref{lem:preliminare} to the sequence
$\{w_{n}^{2}\}$: then
we get the existence of sequences $\{x^{2}_{n}\}\subset \Omega, \{R_{n}^{2}\}\subset (0,+\infty)$
with $R_{n}^{2}\to+\infty$
 and $v^{2}\in D^{1,2}(\mathbb R^{N})$ solution of \eqref{star}, such that
\begin{itemize}
\item[(2a)] $ w_{n}^{3}:=w_{n}^{2} - v^{2}_{R_{n}^{2},x_{n}^{2}} + o_{n}(1)$
with $o_{n}(1)\to 0$ in $D^{1,2}(\mathbb R^{N})$, and $\{w_{m}^{3}\}$ is a $(PS)$ sequence for $I_{*}$,\medskip
\item[(2b)] $w^{3}_{m}\rightharpoonup 0$ in $H^{1}_{0}(\Omega)$, \medskip
\item[(2c)] $I_{*}(w_{n}^{3}) = I_{*} (w_{n}^{2}) - \widehat I(v^{2}) + o_{n}(1)$, \medskip
\item[(2d)] $R_{n}^{2} d(x_{n}^{2} , \partial \Omega) \to +\infty$, \medskip
\item[(2e)] if $I_{*}(w_{n}^{2}) \to \beta<\beta^{*}$, then $\{w_{n}^{2}\}$ is relatively compact;
in particular $w_{n}^{2}\to 0$ in $H^{1}_{0}(\Omega)$ and  $I_{*}(w_{n}^{2})\to 0$.
\end{itemize}
Then by \eqref{eq:inducao1} and (2c):
\begin{equation*}\label{eq:paso3}
I_{*}(v_{n}) =I_{*}(v_{0})+ I_{*}(w_{n}^{3}) + \widehat I(v^{1}) + \widehat I(v^{2}) +o_{n}(1).
\end{equation*}
It is $w_{n}^{3} =v_{n} - v_{0} -v^{1}_{R_{n}^{1}, x_{n}^{1}} +v^{2}_{R_{n}^{2}, x_{n}^{2}} +o_{n}(1)$.
If $\{w_{n}^{3}\}$  is strongly convergent to zero, the Theorem is proved with $k=2$,
otherwise we go on.

By arguing in this way, at the $j-$th stage ($j>1$) we have: $w_{n}^{j-1} \rightharpoonup 0$ in $H^{1}_{0}(\Omega)$ and we get the existence of sequences $\{x^{j-1}_{n}\}\subset \Omega, \{R_{n}^{j-1}\}\subset (0,+\infty)$
with $R_{n}^{j-1}\to+\infty$ and $v^{j-1}\in D^{1,2}(\mathbb R^{N})$ solution of \eqref{star},
such that
\begin{itemize}
\item[(ja)] $ w_{n}^{j}:=w_{n}^{j-1} - v^{j-1}_{R_{n}^{j-1} , x_{n}^{j-1}} + o_{n}(1)$
with $o_{n}(1)\to 0$ in $D^{1,2}(\mathbb R^{N})$, and $\{w_{n}^{j}\}$ is a $(PS)$ sequence for $I_{*}$,\medskip
\item[(jb)] $w^{j}_{n}\rightharpoonup 0$ in $H^{1}_{0}(\Omega)$, \medskip
\item[(jc)] $I_{*}(w_{n}^{j}) = I_{*} (w_{n}^{j-1}) - \widehat I(v^{j-1}) + o_{n}(1)$, \medskip
\item[(jd)] $R_{n}^{j-1} d(x_{n}^{j-1} , \partial \Omega) \to +\infty$, \medskip
\item[(je)] if $I_{*}(w_{n}^{j-1}) \to \beta<\beta^{*}$, then $\{w_{n}^{j-1}\}$ is relatively compact;
in particular $w_{n}^{j-1}\to 0$ in $H^{1}_{0}(\Omega), I_{*}(w_{n}^{j-1})\to 0$.
\end{itemize}
As before it is
\begin{equation}\label{eq:jgenerale0}
w_{n}^{j} = v_{n}-v_{0} -\sum _{i=1}^{j-1}v^{i}_{R_{n}^{i} , x_{n}^{i}},
\end{equation}
and by (jc) we have
\begin{equation}\label{eq:jgenerale}
I_{*}(v_{n}) = I_{*}(v_{0})+I_{*}(w_{n}^{j}) + \sum_{i=1}^{j-1}\widehat I(v^{i}) +o_{n}(1).
\end{equation}

\medskip

Recalling that $I_{*}(v_{0}) \geq 0$  the previous identity gives
\begin{equation}\label{eq:quasifinale}
C\geq I_{*}(v_{n}) \geq I_{*}(w_{n}^{j}) +(j-1)\widehat m +o_{n}(1).
\end{equation}
On the other hand, since $\{w_{n}^{j}\}$ is a bounded $(PS)$ sequence for $I_{*}$,
\begin{eqnarray*}
I_{*}(w_{n}^{j}) &= &I_{*}(w_{n}^{j}) -\frac{1}{2^{*}} I'_{*}(w_{n}^{j})[w_{n}^{j}] +o_{n}(1) \\
&=& \frac{2^{*} - 2}{22^{*}}\int_{\Omega}|\nabla w_{n}^{j}|^{2}
+\frac{1}{2^{*}}\int_{\Omega}|f(w_{n}^{j})|^{22^{*} -2}f(w_{n}^{j}) f'(w_{n}^{j}) w_{n}^{j} 
-\frac{1}{22^{*}}\int_{\Omega}|f(w_{n}^{j})|^{22^{*}} +o_{n}(1)\\
&\geq& \frac{2^{*} - 2}{22^{*}}\int_{\Omega}|\nabla w_{n}^{j}|^{2} +o_{n}(1)\\
&\geq& o_{n}(1)
\end{eqnarray*}
so that, by \eqref{eq:quasifinale}, being $\widehat m>0$, we deduce
 that the process has to finish after a finite number of steps, let us say at some index $k$.
This means, see \eqref{eq:jgenerale0}, that
$$w_{n}^{k+1} = v_{n} - v_{0}  -\sum_{i=1}^{k} v_{R_{n}^{i}, x_{n}^{i}}^{i} \to 0,$$
giving \eqref{eq:approxv0}. 
Moreover as in \eqref{eq:jgenerale} it is
$$I_{*}(v_{n}) = I_{*}(v_{0}) + I_{*}(w_{n}^{k+1}) +\sum_{i=1}^{k}\widehat I(v^{i}) +o_{n}(1)$$
and we deduce \eqref{eq:approxI}, concluding the proof.
\end{proof}

Now, it is known that there exists $U$ solution of 
\begin{equation*}
\left\{
\begin{array}[c]{ll}
-\Delta u =|u|^{2^{*}-2}u\medskip  \\ 
u\in D^{1,2}(\mathbb R^{N})
\end{array}
\right.
\end{equation*}
such that $\widehat I(U_{R, \xi}) = m_{*}$ (recall the definitions in \eqref{eq:m*} adapted to the case $\Omega=\mathbb R^{N}$ and \eqref{eq:conformal}) 
and that on any other solution $W$ which is not of this type, it is $\widehat I (W)\geq 2m_{*}$.
Then, setting $V_{R,\xi} = f^{-1}(U_{R,\xi})$,
it is also $\widehat I(V_{R,\xi}) = m_{*}$ and, on any other solution $Z$ of \eqref{eq:problRN}
which do not belong to the family $\{V_{R,\xi}\}_{R,\xi}$ ,
it holds $\widehat I(Z)\geq 2 m_{*}$.

By this observation, we deduce that
  if $\{ v_{n}\}$ is a $(PS)$ sequence for $I_{*}$ at level $m_{*}$
and $v_{n}\rightharpoonup v_{0}$,
Lemma \ref{lem:splitting} gives, $v_{n}\to v$ in $H^{1}_{0}(\Omega)$,
and in this case we have compactness, 
or equivalently, the Lemma holds with $k=1$. In this case
$$m_{*} =  I_{*}(v_{0}) + \widehat I(v^{1})+o_{n}(1)$$
and since $I_{*}(v_{0})\geq 0$, it has to be necessarily $v^{0}=0, v^{1} = V$; therefore
 $$v_{n} = V_{R_{n},x_{n}} +o_{n}(1) \quad \text{ in }\quad D^{1,2}(\mathbb R^{N}).$$
This final observation will be used below.

\section{The barycenter map}	\label{bary}
The aim of this section is to localize the barycenters of functions on $\mathcal N_{p}$
which are almost at the ground state level.
Indeed,  thanks to the results proved in the previous sections, we are able to show that,
roughly speaking, the
functions in the Nehari manifold $\mathcal N_{p}$ (at least for $p$ near the critical exponent 
$22^{*}$) which are almost at the
ground state level $\mathfrak m_{p}$, have barycenter ``near'' $\Omega$. This is the main result of this Section
(see Proposition \ref{baricentri})
and will be fundamental in the next Section in order to prove the multiplicity results for our problem.

We begin by introducing the barycenter map that will
allow us to compare the topology of $\Omega$ with the topology of suitable
sublevels of $I_{p}\,;$ precisely sublevels with energy near the minimum level $\mathfrak m_p.$

For $u\in H^{1}(\mathbb{R}^{N})$ with compact support, 
let us denote with the same symbol $u$ its
trivial extension out of supp\,$u$.
In particular a function in  $H^{1}_{0}(\Omega)$ can be thought 
also as an element of $D^{1,2}(\mathbb{R}^{N})$.

 The barycenter of $u$ (see \cite{BC}) is defined as
\begin{equation*}
\beta(u)=\dfrac{\displaystyle\int_{\mathbb{R}^{N}} x|\nabla u|^{2}}{\displaystyle\int_{\mathbb{R}
^{N}}|\nabla u|^{2}}\in \mathbb R^{N}.
\end{equation*}
From now on, we fix  $r>0$ a radius sufficiently small such that $B_{r}\subset\Omega$ and the sets
$$
\Omega^{+}_{r}=\{x\in\mathbb{R}^{3}:d(x,\Omega)\le r\}\,
$$
$$
\Omega^{-}_{r}=\{x\in\Omega:d(x,\partial\Omega)\ge r\}\,
$$
are homotopically equivalent to $\Omega $. 
 $B_{r}$ stands for the ball of radius $r>0$ centred in $0$.
We denote by
\begin{equation}\label{h}
h:\Omega^{+}_r \rightarrow\Omega^-_r
\end{equation}
the homotopic equivalence map such that $h|_{\Omega^-_r}$ is the identity.

\smallskip

Now we have the following:
\begin{proposition}\label{baricentri}
There exists $\varepsilon>0$ such that if $p\in(22^{*}-\varepsilon,22^{*}),$ it
follows
$$v\in\mathcal{N}_{p} \ \mbox{ and } \ I_{p}(v)<\mathfrak m_{p}+\varepsilon\,
\Longrightarrow\,\beta(v)\in\Omega^{+}_{r}.$$
\end{proposition}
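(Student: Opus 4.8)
The plan is to argue by contradiction, following the classical Benci--Cerami scheme but handling the change of variable $f$ with care. Suppose the statement fails: then there exist a sequence $p_{n}\to 22^{*}$ and functions $v_{n}\in\mathcal N_{p_{n}}$ with $I_{p_{n}}(v_{n})<\mathfrak m_{p_{n}}+\varepsilon_{n}$, $\varepsilon_{n}\to0^{+}$, but $\beta(v_{n})\notin\Omega^{+}_{r}$. Since $\mathfrak m_{p_{n}}\to m_{*}$ by Theorem~\ref{prop:limitem*}, we get $I_{p_{n}}(v_{n})\to m_{*}$, and by Remark~\ref{rem:general} the sequence $\{v_{n}\}$ is bounded in $H^{1}_{0}(\Omega)$; moreover, being on the Nehari manifolds, $\{v_{n}\}$ stays bounded away from $0$ in $H^{1}_{0}(\Omega)$ and in $L^{2^{*}}(\Omega)$ (Remark~\ref{rem:limbaixo} and the argument in Remark~\ref{rem:nuovo}). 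We may also replace $v_{n}$ by $|v_{n}|$, since $f$ is odd and $\beta$, $I_{p_{n}}$ are invariant under this, so we can assume $v_{n}\geq0$; thus hypotheses (a), (b), (c) of Proposition~\ref{prop:importante} hold.

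Next I would produce from $\{v_{n}\}$ a genuine $(PS)$ sequence for the critical functional $I_{*}$ at level $m_{*}$. The natural candidate is $z_{n}:=t_{*}(v_{n})v_{n}\in\mathcal N_{*}$, where $t_{*}(v_{n})$ is the projection onto $\mathcal N_{*}$ given by Lemma~\ref{lemmanehari}\eqref{lemmanehariiv}. By Proposition~\ref{prop:importante}, $t_{*}(v_{n})\to1$, and using the splitting/convergence of the nonlinear terms exactly as in the proof of Proposition~\ref{le:liminf} one checks $I_{*}(z_{n})\to m_{*}$. Since $z_{n}\in\mathcal N_{*}$ and $m_{*}=\inf_{\mathcal N_{*}}I_{*}$, the sequence $\{z_{n}\}$ is a minimizing sequence on $\mathcal N_{*}$, and by Ekeland's variational principle (applied on the $C^{1}$ manifold $\mathcal N_{*}$, which is a natural constraint as in Lemma~\ref{lemmaPS}) one can assume, after a small perturbation not affecting the barycenter in the limit, that $\{z_{n}\}$ is a $(PS)_{m_{*}}$ sequence for $I_{*}$ on $H^{1}_{0}(\Omega)$. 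Alternatively, and perhaps more cleanly, one shows directly that $(I_{p_{n}}|_{\mathcal N_{p_{n}}})'(v_{n})\to0$ forces, via $t_{*}(v_{n})\to1$ and the uniform estimates on $f$, that $\{z_{n}\}$ is asymptotically critical for $I_{*}$.

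Now apply the final observation following Lemma~\ref{lem:splitting}: a $(PS)$ sequence for $I_{*}$ at level $m_{*}$ either converges strongly in $H^{1}_{0}(\Omega)$ to a solution at level $m_{*}$ — impossible, since $m_{*}$ is not achieved (the remark after \eqref{eq:m*}, resting on Lemma~\ref{lem:nonex}) — or else, by the Splitting Lemma with $k=1$ and $v_{0}=0$, we have
\begin{equation*}
z_{n}=V_{R_{n},x_{n}}+o_{n}(1)\quad\text{in }D^{1,2}(\mathbb R^{N}),
\end{equation*}
with $R_{n}\to+\infty$, $x_{n}\in\Omega$, and $R_{n}\,d(x_{n},\partial\Omega)\to+\infty$. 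Since $t_{*}(v_{n})\to1$, the same profile decomposition holds for $v_{n}$ itself. One then computes the barycenter: because $|\nabla V_{R,\xi}|^{2}$ concentrates (as $R\to\infty$) as a Dirac mass at $\xi$ with total mass $\|V\|^{2}_{D^{1,2}}$, the quantity $\beta(v_{n})=\big(\int x|\nabla v_{n}|^{2}\big)/\|v_{n}\|^{2}$ satisfies $\beta(v_{n})-x_{n}\to0$. Combined with $R_{n}d(x_{n},\partial\Omega)\to+\infty$ (so $d(x_{n},\partial\Omega)\to0$ is not forced, but in any case $x_{n}$ has a limit point in $\overline\Omega$), we get $\beta(v_{n})\in\Omega^{+}_{r}$ for $n$ large, contradicting $\beta(v_{n})\notin\Omega^{+}_{r}$.

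The main obstacle is the middle step: manufacturing an honest $(PS)_{m_{*}}$ sequence for $I_{*}$ out of the ``almost ground states'' $v_{n}$ of the subcritical problems $I_{p_{n}}$, so that the Splitting Lemma applies. This requires the full force of Proposition~\ref{prop:importante} ($t_{*}(v_{n})\to1$) together with careful control of the difference between the subcritical nonlinearity $|f(v)|^{p_{n}-2}f(v)f'(v)$ and the critical one $|f(v)|^{22^{*}-2}f(v)f'(v)$ — precisely the kind of estimate carried out in the Claim inside the proof of Proposition~\ref{prop:importante} — to guarantee that the approximating critical functional sees $v_{n}$ as asymptotically critical. The barycenter concentration computation is standard once the profile decomposition is in hand, and the nonexistence input (Lemma~\ref{lem:nonex}) is what rules out the ``good'' alternative in the splitting, forcing the bubbling that pins the barycenter near $\Omega$.
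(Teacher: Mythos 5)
Your proposal follows essentially the same route as the paper's proof: contradiction, normalization $v_n\geq 0$, projection $t_*(v_n)v_n$ onto $\mathcal N_*$ with $t_*(v_n)\to 1$ via Proposition \ref{prop:importante}, Ekeland to upgrade the minimizing sequence on $\mathcal N_*$ to a $(PS)_{m_*}$ sequence, the Splitting Lemma to obtain $t_*(v_n)v_n = V_{R_n,x_n}+o_n(1)$, and the barycenter computation to force $\beta(v_n)$ into $\overline\Omega$. The only caveat is your ``alternative'' to Ekeland: the hypothesis gives no information that $(I_{p_n}|_{\mathcal N_{p_n}})'(v_n)\to 0$, so the Ekeland route you describe first is the one that actually works (and is the paper's).
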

\begin{proof}
We argue by contradiction. Assume that there exist sequences $\varepsilon
_{n}\rightarrow0,p_{n}\rightarrow 22^{*}$ and $w_{n} \in\mathcal{N}_{p_{n}}$ such
that
\begin{equation}\label{contradiction}
\mathfrak m_{p_{n}}\leq I_{p_{n}}(w_{n})\le \mathfrak m_{p_{n}}+\varepsilon_{n}
\  \mbox{ and } \ \ \beta(w_{n})\notin\Omega^{+}_{r}.
\end{equation}
Then by Theorem \ref{prop:limitem*} we deduce
\begin{equation}\label{converg}	
	I_{p_{n}}(w_{n})\rightarrow m_{*}
\end{equation}
and then by Remark \ref{rem:general}, $\{w_n\}$ is bounded in $H^{1}_{0}(\Omega)$.
We can suppose that $w_{n}\rightharpoonup w$ in $H^{1}_{0}(\Omega)$.
Since all the Nehari manifolds  $\mathcal N_{p}$ are bounded away from zero 
(see Lemma \ref{lemmanehari} and Remark \ref{rem:limbaixo}) we know that $w_{n}\not\to0$ in $H^{1}_{0}(\Omega)$
and then, by Remark \ref{rem:nuovo}, we deduce
$|w_{n}|_{2^{*}} \not\to 0.$

Since the functions $|f(t)|^{p-2}, f(t)f'(t)t$ are even, it is 
$I'_{p}(v)[v] = I'_{p}(|v|)[|v|]$; hence we can assume, without loss of generality, that $w_{n}\geq0$.

Let $t_{*}(w_{n})>0$ such that $t_{*}(w_{n}) w_{n}\in\mathcal{N}_{*}$.
By Proposition  \ref{prop:importante} we have $\lim_{n\to+\infty}t_{*}(w_{n})= 1$.

\smallskip

The proof now consists in
\begin{itemize}
\item {\bf STEP 1:} prove that $\{t_{*}(w_{n})w_{n}\}\subset \mathcal N_{*} $ is  a minimizing sequence for $I_{*}$ on $\mathcal N_{*}$;
\item {\bf STEP 2:} use the Ekeland Variational Principle and write $t_{*}(w_{n})w_{n}=V_{R_{n},x_{n}}+z_{n}$ where
$V_{R_{n},x_{n}}$ is introduced at the end of Section \ref{sec:Nehari} and $z_{n}\to 0$ in $D^{1,2}(\mathbb R^{N})$;
\item {\bf STEP 3:} compute the barycentre of $t_{*}(w_{n})w_{n}$ by using the representation obtained in STEP 2
and contradict \eqref{contradiction}, finishing the proof of the proposition.
\end{itemize}

\medskip

\medskip

{\bf STEP 1:} $\lim_{n\to+\infty} I_{*}(t_{*}(w_{n})w_{n})=m_{*}. $

\medskip


Observe that by the H\"older inequality, \eqref{outrasiv} and \eqref{outrasvii} of Lemma \ref{lem:outras}
one has:
\begin{eqnarray*}
I_{*}(t_{*}(w_{n})w_{n}) - I_{p_{n}}(w_{n})&=&\frac{t_{*}(w_{n})}{2}\|w_{n}\|^{2}-\frac{1}{22^{*}}\int_{\Omega}
f(t_{*}(w_{n})w_{n})^{22^{*}}
-\frac{1}{2}\|w_{n}\|^{2}+\frac{1}{p_{n}}\int_{\Omega}f(w_{n})^{p_{n}}\\
&\leq&\frac{t_{*}(w_{n})^{2}-1}{2}\|w_{n}\|^{2}-\frac{\tau_{n}}{22^{*}}\int_{\Omega}f(w_{n})^{22^{*}}\\
&+&\frac{1}{p_{n}}|\Omega|^{\frac{22^{*} - p_{n}}{22^{*}}} \Big( \int_{\Omega} f(w_{n})^{22^{*}}\Big)^{p_{n}/22^{*}}
\end{eqnarray*}
where $\tau_{n}:=\max \{t_{*}(w_{n})^{2^{*}}, t_{*}(w_{n})^{22^{*}}\}$. 
Then passing to the limit in $n$,  by using that $\lim_{n\to+\infty} t_{*}(w_{n})=1$, that
$\{w_{n}\}$  is bounded and that $\int_{\Omega} f(w_{n})^{22^{*}}\to M>0$, we infer
$I_{*}(t_{*}(w_{n})w_{n}) - I_{p_{n}}(w_{n})\leq o_{n}(1)$. Then
$$
0<m_{*}\le I_{*}(t_{*}(w_{n}) w_{n})  \leq I_{p_{n}}(w_{n})+o_{n}(1)
$$
and by \eqref{converg} we conclude $I_{*}(t_{*}(w_{n}) w_{n})\rightarrow m_{*}$ for $n\rightarrow+\infty$.

\bigskip

{\bf STEP 2:} Representation of the minimizing sequence $\{{t_{*}(w_{n})w_{n}}\}$.

\medskip

Since $\{t_{*}(w_{n})w_{n}\}$ is a minimizing sequence for $I_{*}$,
 the Ekeland's Variational Principle implies that there exist $\{v_{n}\}\subset
\mathcal{N}_{*} $ and $\{\mu_{n}\}\subset\mathbb{R}$, a sequence of Lagrange multipliers, such that
\begin{align*}
& \|t_{*}(w_{n}) w_{n} -v_{n}\|\rightarrow0\\
& I_{*}(v_{n})\rightarrow m_{*}\\
& I_{*}^{\prime}(v_{n})-\mu_{n} G_{*}^{\prime}(v_{n})\rightarrow0\nonumber
\end{align*}
and Lemma \ref{lemmaPS} ensures that $\{v_{n}\}$ is a $(PS)$ sequence for the free functional
$I_{*}$ on the whole space $H^{1}_{0}(\Omega)$ at level $m_*$.
By the arguments at the end of Section \ref{sec:Nehari} we have
$$
v_{n} - V_{R_{n}, x_{n}}\rightarrow0\ \ \ \text{ in } D^{1,2}
(\mathbb{R}^{3})
$$
where $\{x_{n}\}\subset\Omega, R_{n}\rightarrow+\infty$. Then  we can write
$$v_{n}=V_{R_{n}, x_{n}}+z_{n}$$
with a remainder $z_{n}$ such that $\|z_{n}\|_{D^{1,2}(\mathbb{R}^{N}
)}\rightarrow0 $\,.
It is clear that $$t_{*}(w_{n}) w_{n}=v_{n}+ t_{*}(w_{n}) w_{n}-v_{n}= v_{n}+o_{n}(1);$$
 so, renaming the
remainder again $z_{n}$, we have
$$t_{*}(w_{n})w_{n}=V_{R_{n},x_{n}}+z_{n}.$$

\smallskip

{\bf STEP 3:} Computing the barycenter and finishing the proof.
\medskip

By using the representation obtained in STEP 2, the $i-$th
coordinate of the barycenter of $t_{*}(w_{n})w_{n}$ satisfies
\begin{equation}\label{bari} 
\beta(w_{n})^{i} \| t_{*}(w_{n}) w_{n}\|^{2}_{D^{1,2}(\mathbb{R}^{N})}
= \int_{\mathbb{R}^{N}}x^{i} |\nabla V_{R_{n},x_{n}}|^{2}\\ 
            +\int_{\mathbb{R}^{N}} x^{i}|\nabla z_{n}|^{2}
+2\int_{\mathbb{R}^{N}}x^{i} \nabla V_{R_{n},x_{n}}\nabla z_{n}
\end{equation}
where $x^{i}$ is the $i-$th coordinate of $x\in\mathbb{R}^{N}$.
In order to localise the barycenters we need to pass to the limit in each term in the above expression;
however, at this stage, the computation of each term is 
completely analogous to the estimates made in \cite[pag. 296-7]{Sicilia}:
it just involves changes of variables in the integrals.
We just recall here the final results: it is
\begin{eqnarray*}
\| t_{*}(w_{n}) w_{n}\|^{2}_{D^{1,2}(\mathbb{R}^{N})}&=&\|V\|^{2}
_{D^{1,2}(\mathbb{R}^{N})}+o_{n}(1),\\
\int_{\mathbb R^{N}} x^{i}|\nabla V_{R_{n},x_{n}}|^{2} &=& x_{n}^{i} \int_{\mathbb R^{N}} |\nabla V|^{2},  \\
\int_{\mathbb R^{N}} x^{i}|\nabla z_{n}|^{2}& = & \int_{\mathbb R^{N}} x^{i}\nabla V_{R_{n}, x_{n}}\nabla z_{n} =o_{n}(1).
\end{eqnarray*}
Then by \eqref{bari} we find for the $i-th$ coordinate of the barycenter,
\begin{equation*}
\label{b}\beta(w_{n})^{i}= \frac{x_{n}^{i} \displaystyle\int_{\mathbb{R}^{N}}|\nabla V|^{2}+o_{n}(1)}
{\|V\|^{2}_{D^{1,2}(\mathbb{R}^N)}+o_{n}(1)}\,.
\end{equation*}
Since $\{x_{n}\}\subset\Omega$ the above equation implies that for large $n$ is $\beta(w_{n})\in\overline{\Omega
}$: this is in contrast with \eqref{contradiction} and  proves the proposition.
\end{proof}

\section{Proof of Theorem \ref{th:main}}\label{sec:finale}
Here we complete the  proof of our theorem but first we need a slight modification to the
previous notations.
Let $r>0$ be the one fixed at the beginning of Section \ref{bary}, that is
in such a way that
$\Omega^{+}_{r}=\{x\in\mathbb{R}^{3}:d(x,\Omega)\le r\}\,
$
and 
$
\Omega^{-}_{r}=\{x\in\Omega:d(x,\partial\Omega)\ge r\}\,
$
are homotopically equivalent to $\Omega $.
We  add a subscript $r$,
to denote the same quantities defined in the previous sections when the domain
$\Omega$ is replaced by $B_{r}$; namely integrals are taken on $B_{r}$ and norms are taken
for functional spaces defined on $B_{r}.$
Hence for example, for all $p\in (4,22^{*})$ we set:

$$\mathcal{N}_{p,r}=\left\{ u\in H^{1}_{0}(B_{r}):\| u\|^{2}_{H^{1}_{0}(B_{r}
)} = \int_{B_{r}} |f(v) |^{p-2}f(v)f'(v)v \right\},$$
$$I_{p,r}(v)= \frac{1}{2}\|v\|^{2}_{H^{1}_{0}(B_{r})} - \frac{1}{p}\int_{B_{r}} |f(v)|^{p},$$
$$\mathfrak m_{p,r}=\min_{v\in \mathcal{N}_{p,r}} I_{p,r}(v)=I_{p,r}(\mathfrak g_{p,r}).$$
Observe that, by means of the Palais Symmetric Criticality Principle, the ground state $\mathfrak g_{p,r}$
is radial. 
Moreover let
$$
I_{p}^{\mathfrak m_{p,r}}=\left\{ u\in\mathcal{N}_{p}: I_{p}(u)\le \mathfrak m_{p,r}\right\}
$$
which is non vacuous since $\mathfrak m_{p}<\mathfrak m_{p,r}$.

Define also, for $p\in(4,22^*)$ the map $\Psi_{p,r}:\Omega_{r}^{-}\rightarrow {\mathcal N}_{p}$ such that
\[
\Psi_{p,r}(y)(x)=
\left\{
\begin{array}
[c]{ccl}
\mathfrak g_{p,r}(\left\vert x-y\right\vert ) & \mbox{if} & x\in B_{r}(y)\\
0 & \mbox{if} & x\in\Omega\setminus B_{r}(y)
\end{array}
\right.
\]
and note that we have
\begin{equation*}
	\beta(\Psi_{p,r}(y))=y\ \ \ \mbox{and}\ \ \ \Psi_{r,p}(y)\in I_p^{\mathfrak m_{p,r}}\,.
\end{equation*}
Moreover, 
since $\mathfrak m_p+k_p=\mathfrak m_{p,r}$ where $k_p>0$ and tends to zero if $p\rightarrow22^*$
(see Theorem \ref{prop:limitem*}), in
correspondence of $\varepsilon>0$ provided by Proposition \ref{baricentri}, there exists a $\overline{p}\in [4,22^*)$ such
that for every $p\in[\overline{p},22^*)$ it results $k_p<\varepsilon$; so if $v\in I_p^{\mathfrak m_{p,r}}$ we have
$$I_p(u)\le \mathfrak m_{p,r}<\mathfrak m_p+\varepsilon,$$
at least for $p$ near $22^*.$
Hence we can define the following maps:
$$\Omega^-_r\stackrel{\Psi_{p,r}}{\longrightarrow}I_p^{\mathfrak m_{p,r}}\stackrel{h\circ\beta}{\longrightarrow}\Omega_r^-\,$$
with $h$  given by \eqref{h}.
Since the composite map $ h\circ\beta\circ\Psi_{p,r}$ is homotopic to the identity of $\Omega^-_r$
by a property of the category we have 
$$\mbox{cat}_{I_p^{\mathfrak m_{p,r}}}(I_p^{\mathfrak m_{p,r}})\ge \mbox{cat}_{\Omega^-_r} (\Omega^-_r)$$
and due to our choice of $r$, it follows 
$\mbox{cat}_{\Omega^-_r} (\Omega^-_r)=\mbox{cat}_{\overline{\Omega}}(\overline{\Omega})$.
Then we have found a sublevel of $I_p$ on $\cal N_p$ with category greater than 
$\mbox{cat}_{\overline{\Omega}}(\overline{\Omega})$
and since the $(PS)$ condition is verified on $\cal N_p$\,,  the Lusternik-Schnirel\-mann 
theory guarantees the existence of at least $\mbox{cat}_{\overline{\Omega}}(\overline{\Omega})$ critical points for $I_p$
on the manifold $\cal N_p$ which give rise to solutions of \eqref{principal}.

\medskip

The existence of another solution is obtained with the same arguments of Benci, Cerami and Passaseo
\cite{BCP}. We recall here the arguments for the reader convenience.
 Since by assumption
$\Omega$ is not contractible in itself, by the choice of $r$ it results $\mbox{cat}_{\Omega_{r}^{+}}\,(\Omega_{r}^{-})>1$, namely
$\Omega_{r}^{-}$ is not contractible in $\Omega_{r}^{+}$. 

\smallskip

{\bf Claim:}  the set ${\Psi_{p,r}(\Omega_{r}^{-})}$ is not  contractible in $I_{p}^{\mathfrak m_{p,r}}$.
 
 \smallskip
 
Indeed, assume by contradiction that $\mbox{cat}_{I_{p}^{\mathfrak m_{p,r}}}\,(\Psi_{p,r}(\Omega_{r}^{-}))=1$:
this means that there exists a map $\mathcal H \in C([0,1]\times{\Psi_{p,r}(\Omega_{r}^{-})}; I_{p}^{\mathfrak m_{p,r}})$ such that
$${\mathcal H}(0,u)=u \ \ \forall u\in{\Psi_{p,r}(\Omega_{r}^{-})}\ \  \text{and}$$
$$\exists\, w\in I_{p}^{\mathfrak m_{p,r}}: {\mathcal H}(1,u)=w \ \ \forall u\in {\Psi_{p,r}(\Omega_{r}^{-})} .$$
Then $F=\Psi_{p,r}^{-1}({\Psi_{p,r}(\Omega_{r}^{-})})$ is closed, contains $\Omega_{r}^{-}$ and 
is contractible in $\Omega_{r}^{+}$ since one can  define the map 
\begin{equation*}
{\mathcal G}(t,x)=
\begin{cases}
   { \beta(\Psi_{r,p}(x))} & \text{if  $0\le t\le 1/2$}, \\ 
    \beta ({\mathcal H}(2t-1, \Psi_{p,r}(x))) &\text{if  $1/2\le t\le1$}.
\end{cases}
\end{equation*}
Then also $\Omega_{r}^{-}$ is contractible in $\Omega_{r}^{+}$ and this gives  a contradiction.

\medskip

On the other hand we can choose a function $z\in {\mathcal N}_{p}\setminus{\Psi_{p,r}(\Omega_{r}^{-})}$ so that the cone 
$$\mathcal C=\left\lbrace \theta z+(1-\theta) u : u\in {\Psi_{p,r}(\Omega_{r}^{-})}, \theta\in [0,1]\right\rbrace $$
is compact and contractible in $H^{1}_{0}(\Omega)$ and  $0\notin{\mathcal C}$. 
For every $v\neq0$ let $t_{p}(v)$ be the unique positive number provided by 
\eqref{lemmanehariiv} in 
Lemma \ref{lemmanehari};
 it follows that if we set
$$\widehat{\mathcal C}:=\{t_{p}(v)v : v\in \mathcal C\}, \ \ \ M_{p}:=\max_{\widehat{\mathcal C}} I_{p}$$
then $\widehat{\mathcal C}$ is contractible in $I_{p}^{M_{p}}$ and $M_p>\mathfrak m_{p,r}.$ As a consequence
also ${\Psi_{p,r}(\Omega_{r}^{-})}$ is contractible in $I_{p}^{M_{p}}.$ 

In conclusion the set $\Psi_{p,r}(\Omega_{r}^{-}) $ is contractible in $I_{p}^{M_{p}}$ and not in $I_{p}^{\mathfrak m_{p,r}}$
and this is possible, since the $(PS)$ condition holds,  only if there is
 another critical point with critical level between $\mathfrak m_{p,r}$ and $M_{p}$.

\medskip

It remains to prove that these solutions are positive. Note that we can apply all the previous
machinery replacing the functional $I_{p}$ with
\begin{equation*}
I^+_{p}(u)=\frac{1}{2}\|v\|^2 -\frac{1}{p} \int_{\Omega} |f(v^{+})|^{p-2} f(v^{+}) f'(v^{+})v^{+}
\end{equation*}
where $v^{+}= \max\{v,0\}$. Then we obtain again
at least $\mbox{cat}_{\overline{\Omega}}(\overline{\Omega})$  (or $\mbox{cat}_{\bar{\Omega}}(\overline{\Omega})+1$)
nontrivial solutions that now are positive by 
the maximum principle.

\section{Proof of Theorem \ref{th:mainMorse}}\label{sec:Morse}
Before prove the theorem we first recall some basic facts of Morse theory
and fix some notations.

For a pair of topological spaces $(X,Y)$,
$Y\subset X,$ let $H_{*}(X,Y)$ be its singular homology with coefficients in some field $\mathbb F$
(from now on omitted) and 
$$\mathcal P_{t}(X,Y)=\sum_{k}\dim H_{k}(X,Y)t^{k}$$
the Poincar\'e polynomial of the pair. If $Y=\emptyset$, it will be always omitted in the objects which involve the pair.
Recall that  if $H$ is an Hilbert space,  $I:H\to \mathbb R$  a $C^{2}$ functional  and 
$v$ an isolated critical point  with $I(v)=c$, the  {\sl polynomial Morse index} of $v$ is
$$\mathcal I_{t}(v)=\sum_{k}\dim C_{k}(I,v)  t^{k}$$
where $C_{k}(I,v)=H_{k}(I^{c}\cap U, (I^{c}\setminus\{v\})\cap U)$ are the critical groups.
Here $I^{c}=\{v\in H: I(v)\leq c\}$ and $U$ is a neighborhood of the critical point $u$.
The multiplicity of $v$ is the number $\mathcal I_{1}(v)$.

It is known that  for a non-degenerate critical point $v$
(that is, the selfadjoint operator associated to $I''(v)$
is an isomorphism)
it is $\mathcal I_{t}(v)=t^{\mathfrak i (v)}$,
where $\mathfrak i(v)$ is the {\sl (numerical) Morse index of $v$}: the maximal dimension
of the subspaces where $I''(v)[\cdot,\cdot]$ is negative definite.

\medskip


Coming back to our functional, 
it is straightforward to see that  $I_{p}$ is of class $C^{2}$ and
for $v,w,u\in H^{1}_{0}(\Omega)$:
\begin{equation*}\label{I''}
I_{p}''(v)[w,u]=\int_{\Omega}\nabla w \nabla u-(p-1)\int_{\Omega}|f(v)|^{p-2} (f'(v))^{2}wu-
\int_{\Omega}|f(v)|^{p-2}f(v)f''(v)wu.
\end{equation*}
Hence $I_{p}''(v)$ is represented by the operator
\begin{equation*}
L_{p}(v):=\textrm R(v)-\mathrm K_{p}(v): H^{1}_{0}(\Omega)\to H^{-1}(\Omega)
\end{equation*}
where $\mathrm R(v)$ is the Riesz isomorphism
$$\mathrm R(v): w\in H^{1}_{0}(\Omega) \mapsto \mathrm R(v)[w]\in H^{-1}(\Omega)$$
acting as
$$\forall u\in H^{1}_{0}(\Omega): (\mathrm R(v)[w])[u] =\int_{\Omega} \nabla w\nabla u $$
and 
$$\mathrm K_{p}(v): w\in H^{1}_{0}(\Omega) \mapsto \mathrm K_{p}(v)[w] \in  H^{-1}(\Omega)$$
acts as
$$\forall u\in H^{1}_{0}(\Omega) : (\mathrm K_{p}(v)[w] )[u] = (p-1)\int_{\Omega}|f(v)|^{p-2} (f'(v))^{2}wu-
\int_{\Omega}|f(v)|^{p-2}f(v)f''(v)wu.$$
\begin{lemma}
The operator $\mathrm K(v)$ is  compact, that is, if
 $w_{n}\rightharpoonup 0$ then $\|\mathrm K(v)[w_{n}]\|\to 0$ in  $H^{-1}(\Omega)$.
\end{lemma}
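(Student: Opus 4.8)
The plan is to recognize $\mathrm K_{p}(v)$ as a multiplication operator with a coefficient lying in some $L^{q}(\Omega)$ with $q>N/2$, and then conclude by a standard Hölder--Rellich compactness argument. Using the identity $f''(t)=-2f(t)(f'(t))^{4}$ (see the proof of \eqref{monotonicidadeii} in Corollary \ref{monotonicidades}) one first rewrites, for all $w,u\in H^{1}_{0}(\Omega)$,
\[
\bigl(\mathrm K_{p}(v)[w]\bigr)[u]=\int_{\Omega}a_{v}\,w\,u ,\qquad
a_{v}:=(p-1)|f(v)|^{p-2}(f'(v))^{2}+2|f(v)|^{p}(f'(v))^{4}\ge 0 .
\]

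First I would show that $a_{v}\in L^{q}(\Omega)$ for some $q>N/2$. This is where one has to be careful: the key is to exploit the decay of $f'$. From $(f'(t))^{2}=(1+2f(t)^{2})^{-1}\le(2f(t)^{2})^{-1}$ (recall \eqref{mudanda de variavel}) we get $|f(v)|^{p-2}(f'(v))^{2}\le\tfrac12|f(v)|^{p-4}$, and from $|f(v)f'(v)|\le 2^{-1/2}$ (item \eqref{Lemafix} of Lemma \ref{Lema f}) we get $|f(v)|^{p}(f'(v))^{4}\le\tfrac14|f(v)|^{p-4}$; hence, using \eqref{Lemafv} of Lemma \ref{Lema f},
\[
0\le a_{v}\le \tfrac{p}{2}\,|f(v)|^{p-4}\le C_{p}\,|v|^{(p-4)/2}\quad\text{in }\Omega .
\]
Choosing $q:=2\cdot 2^{*}/(p-4)$ one has $q(p-4)/2=2^{*}$, so $\int_{\Omega}a_{v}^{q}\le C_{p}^{q}\int_{\Omega}|v|^{2^{*}}<+\infty$ because $v\in H^{1}_{0}(\Omega)\hookrightarrow L^{2^{*}}(\Omega)$; moreover $q>N/2$ is \emph{equivalent} to $p<22^{*}$, which holds by assumption. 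I expect this to be the main obstacle: the naive estimate $|f(v)|^{p-2}\le C|v|^{(p-2)/2}$ would \emph{not} be sufficient when $p$ is close to $22^{*}$, and it is precisely the two extra powers gained from the factor $(f'(v))^{2}$ — equivalently, the sublinear-at-infinity growth of $f$ recorded in Lemma \ref{Lema f} — that pushes the integrability exponent above the borderline value $N/2$, exactly when $p<22^{*}$.

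Once that integrability is in hand, the compactness is routine. Since $1/q<2/N$, one can fix $r\in(1,2^{*})$ with $\tfrac1q+\tfrac1r+\tfrac1{2^{*}}=1$. If $w_{n}\rightharpoonup 0$ in $H^{1}_{0}(\Omega)$, then $\{w_{n}\}$ is bounded and, by the Rellich--Kondrachov compact embedding $H^{1}_{0}(\Omega)\hookrightarrow\hookrightarrow L^{r}(\Omega)$, we have $w_{n}\to 0$ in $L^{r}(\Omega)$; hence, by Hölder's inequality and the Sobolev embedding $H^{1}_{0}(\Omega)\hookrightarrow L^{2^{*}}(\Omega)$,
\[
\|\mathrm K_{p}(v)[w_{n}]\|_{H^{-1}(\Omega)}=\sup_{\|u\|\le 1}\Bigl|\int_{\Omega}a_{v}\,w_{n}\,u\Bigr|\le |a_{v}|_{q}\,|w_{n}|_{r}\,\sup_{\|u\|\le1}|u|_{2^{*}}\le C\,|a_{v}|_{q}\,|w_{n}|_{r}\longrightarrow 0 ,
\]
which is exactly the asserted compactness. (Equivalently, splitting $a_{v}=a_{v}\,\mathbf 1_{\{a_{v}\le M\}}+a_{v}\,\mathbf 1_{\{a_{v}>M\}}$ and letting $M\to+\infty$ presents $\mathrm K_{p}(v)$ as an operator-norm limit of multiplication operators with bounded coefficients, which are compact by Rellich.)
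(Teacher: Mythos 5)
Your proof is correct, but the key estimate is genuinely different from the one in the paper, and the difference matters. The paper bounds the first integral by discarding the decay of $f'$ via $|f'(t)|\le 1$ and using $|f(t)|\le 2^{1/4}|t|^{1/2}$, which reduces matters to $\int_{\Omega}|v|^{(p-2)/2}|w_{n}u|$ and a three-factor H\"older inequality with exponents $\tfrac{22^{*}}{p-2}$, $\tfrac{22^{*}}{p}$ and $\tfrac{22^{*}}{22^{*}-(2p-2)}$; the second integral is then reduced to the first exactly as you do, via $|f''|=2|f|(f')^{4}$ and $|ff'|\le 2^{-1/2}$. The catch is that the third exponent is only admissible when $22^{*}>2p-2$, i.e. $p<2^{*}+1$ (and the stated membership in $(1,2^{*}]$ even forces $p\le 2^{*}$), which excludes precisely the regime of $p$ close to $22^{*}$ in which the lemma is invoked in Section 6. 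Your version keeps the factor $(f'(v))^{2}\le (2f(v)^{2})^{-1}$ to gain two powers of $f$, arriving at the coefficient bound $a_{v}\le C_{p}|v|^{(p-4)/2}\in L^{q}(\Omega)$ with $q=2\cdot 2^{*}/(p-4)>N/2$ if and only if $p<22^{*}$; this covers the whole range $(4,22^{*})$, and your remark that the naive bound on $|f(v)|^{p-2}$ alone cannot work near the critical exponent is exactly right. The subsequent H\"older--Rellich step, with $r\in(1,2^{*})$ determined by $1/q+1/r+1/2^{*}=1$, is standard and correctly executed. In short, your argument is not only valid but sharper than the printed one, and it is the one that actually establishes the compactness in the range of exponents where the lemma is used.
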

\begin{proof}
Indeed for every  $u\in H^{1}_{0}(\Omega)$,
by \eqref{Lemafii} and \eqref{Lemafv} of Lemma \ref{Lema f}, 
 \begin{eqnarray}\label{eq:1o}
\Big| \int_{\Omega}|f(v)|^{p-2}(f'(v))^{2}w_{n} u \Big| &\leq& 2^{1/4} \int_{\Omega}|v|^{(p-2)/2}|w_{n} u|  \nonumber\\
&\leq & 2^{1/4} \Big| |v|^{(p-2)/2}\Big| _{22^{*}/(p-2)} |w_{n}| _{22^{*}/p} |u|_{22^{*}/(22^{*}-(2p-2))} \nonumber \\
&\leq& C |v|^{(p-2)/2}_{2^{*}} | w_{n}|_{22^{*}/p} \|u\| \longrightarrow 0 \nonumber
\end{eqnarray}
being
$$\frac{p-2}{22^{*}}+\frac{p}{22^{*}} + \frac{22^{*}-(2p-2)}{22^{*}}=1, \quad \frac{22^{*}}{p}\in(1, 2^{*}) \quad  \text{ and } 
\quad \frac{22^{*} }{22^{*}- (2p-2)}\in (1, 2^{*}].$$

The second integral in $\mathrm K(v)$ can be reduced to the first one.
Indeed, by using first that  $|f''(t)| = 2|f(t)| |f'(t)|^{4}$ (see the proof of \eqref{monotonicidadeii} of Corollary \ref{monotonicidades}) and then \eqref{Lemafix} of Lemma \ref{Lema f}, we get
\begin{eqnarray*}
\Big|\int_{\Omega}|f(v)|^{p-2}f(v)f''(v)w_{n}u \Big| &\leq&  \int_{\Omega}|f(v)|^{p-2} f^{2}(v)(f'(v))^{4} |w_{n}u| 
\nonumber \\
&\leq& \frac{1}{2} \int_{\Omega} |f(v)|^{p-2} (f'(v))^{2} |w_{n} u| \nonumber,
 \end{eqnarray*}
 concluding as before. Then we  deduce that
$$\|\mathrm K_{p}(v)[w_{n}]\|=\sup_{\|u\| =1}\Big| (K_{p}(v)[w_{n}] )[u]  \Big|\rightarrow 0$$
and the proof is completed.
\end{proof}

Now  for $a\in(0,+\infty]$, let us define the sets
$$I_{p}^{a}:=\Big\{v\in H_{0}^{1}(\Omega): I_{p}(v)\leq a\Big\}\ , \qquad 
\mathcal N_{p}^{a}:= \mathcal N_{p}\cap  I_{p}^{a}$$\smallskip
$$\mathfrak K_{p}:=\Big\{v\in H^{1}_{0}(\Omega): I'_{p}(v)=0\Big\}\ ,\qquad
\mathfrak K_{p}^{a}:= \mathfrak K_{p}\cap  I_{p}^{a}\ ,\qquad
(\mathfrak K_{p})_{a}:=\Big\{v\in \mathfrak K_{p}: I_{p}(v)> a\Big\} .
$$


In the remaining part of this section we will follow \cite{BC}.
Let $\overline p$  as in Section \ref{sec:finale}
and let $p\in [\overline p,22^{*})$ be fixed.
In particular $I_{p}$ satisfies the Palais-Smale condition.
 We are going to prove that $I_{p}$ restricted to $\mathcal N_{p}$
 has at least $2\mathcal P_{1}(\Omega)-1$ critical points.
 \medskip
 
 We can assume,
 of course,  that there exists a regular value $b^{*}_{p}>\mathfrak m_{p,r}$
 for the functional $I_{p}$ and then 
 $$\Psi_{p,r}: \Omega_{r}^{-}\to  \mathcal N^{\mathfrak m_{p,r}}_{p}\subset \mathcal N_{p}^{b_{p}^{*}}.$$

Since $\Psi_{p,r}$ is injective, it is easily seen that it induces
injective homomorphisms between the homology groups. Then 
$\dim H_{k}(\Omega) = \dim H_{k}(\Omega_{r}^{-})
\leq \dim H_{k}(\mathcal N_{p}^{b_{p}^{*}})$
and consequently
\begin{equation}\label{obvious1}
\mathcal P_{t}(\mathcal N_{p}^{b_{p}^{*}})=\mathcal P_{t}(\Omega)+\mathcal Q(t), \qquad \mathcal Q\in \mathbb P,
\end{equation}
where hereafter $\mathbb P$ denotes the set of polynomials with non-negative integer coefficients.
 
 \medskip

The following result is analogous to \cite[Lemma 5.2]{BC}; we omit the proof.
\begin{lemma}
Let  $\overline r\in (0,\mathfrak m_{p,r})$  and $a\in(\overline r,+\infty]$ a regular level for $I_{p}$.
Then
 \begin{eqnarray}\label{PtP}
\mathcal P_{t}(I_{p}^{a},I_{p}^{\overline r})&=&t \mathcal P_{t}(\mathcal N_{p}^{a}).
\end{eqnarray}
\end{lemma}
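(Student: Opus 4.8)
The plan is to reproduce, in our setting, the deformation argument of Benci--Cerami \cite{BC}, which rests on the fact that the sublevels of the free functional $I_p$ fibre over the Nehari manifold. I would first record the one--dimensional analysis along rays. By \eqref{lemmanehariiv} of Lemma \ref{lemmanehari} and \eqref{monotonicidadeiii} of Corollary \ref{monotonicidades}, for every $v\neq0$ the function $t\mapsto I_p(tv)$ vanishes at $t=0$, is strictly increasing on $(0,t_p(v))$, attains there its maximum $\max_{t>0}I_p(tv)=I_p(t_p(v)v)\ge\mathfrak m_p$, and is strictly decreasing to $-\infty$ on $(t_p(v),+\infty)$: indeed, as in the proof of \eqref{lemmanehariiv} of Lemma \ref{lemmanehari}, $I_p'(tv)[v]=0$ amounts to $\|v\|^{2}=\int_{\{v\neq0\}}\frac{f(t|v|)^{p-1}f'(t|v|)}{t|v|}\,|v|^{2}$, whose right--hand side is strictly increasing in $t$ by Corollary \ref{monotonicidades}\eqref{monotonicidadeiii}, so $I_p'(tv)[v]/t$ is strictly decreasing with exactly one zero. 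Hence $(u,s)\mapsto su$ is a homeomorphism of $\mathcal N_p\times(0,+\infty)$ onto $H^1_0(\Omega)\setminus\{0\}$, and $\phi_u(s):=I_p(su)$ is strictly increasing on $(0,1)$, strictly decreasing on $(1,+\infty)$, with $\phi_u(0^{+})=0$, $\phi_u(+\infty)=-\infty$, and $\max_{s>0}\phi_u(s)=\phi_u(1)=I_p(u)$.

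Next I would describe the two relevant sublevels. Since $\overline r$ lies below the ground--state level $\mathfrak m_p=\inf_{\mathcal N_p}I_p$ (and, incidentally, $(0,\mathfrak m_p)$ contains no critical value of $I_p$, so $\overline r$ is a regular value), one has $\phi_u(1)=I_p(u)>\overline r$ for \emph{every} $u\in\mathcal N_p$, hence $\{s>0:\phi_u(s)\le\overline r\}=(0,\rho_1(u)]\cup[\rho_2(u),+\infty)$ with $\rho_1(u)<1<\rho_2(u)$ depending continuously on $u$. Therefore $I_p^{\overline r}=B\sqcup T$, where $B:=\{0\}\cup\{su:u\in\mathcal N_p,\ s\le\rho_1(u)\}$ is a cone over $\mathcal N_p$ with vertex $0$, hence contractible, and $T:=\{su:u\in\mathcal N_p,\ s\ge\rho_2(u)\}$ deformation retracts (letting $s\downarrow\rho_2(u)$) onto $\{\rho_2(u)u:u\in\mathcal N_p\}\cong\mathcal N_p$, which is contractible by \eqref{lemmanehariv} of Lemma \ref{lemmanehari}; since $I_p(su)>\overline r$ on the band $\rho_1(u)<s<\rho_2(u)$, $B$ and $T$ are exactly the two connected components of $I_p^{\overline r}$, both contractible. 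Running the same computation with $a$ (the case $\mathcal N_p^a=\emptyset$, in which $I_p^a$ and $I_p^{\overline r}$ are disjoint unions of two contractible sets, one deformation retracting onto the other, being trivial) one finds that $\{s:\phi_u(s)\le a\}$ equals $(0,+\infty)$ when $u\in\mathcal N_p^a$ and equals $(0,\sigma_1(u)]\cup[\sigma_2(u),+\infty)$, with $\rho_1(u)<\sigma_1(u)<1<\sigma_2(u)<\rho_2(u)$ and $\sigma_1(u),\sigma_2(u)\to1$ as $u\to\partial\mathcal N_p^a$, otherwise.

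With this picture I would establish a homotopy equivalence of pairs $(I_p^a,I_p^{\overline r})\simeq(\Sigma,\{P_-,P_+\})$, where $\Sigma$ is the unreduced suspension of $\mathcal N_p^a$, i.e. $\mathcal N_p^a\times[0,1]$ with $\mathcal N_p^a\times\{0\}$ collapsed to a point $P_-$ and $\mathcal N_p^a\times\{1\}$ collapsed to a point $P_+$. Collapsing in $I_p^a$ the contractible component $B$ to $P_-$ and $T$ to $P_+$, the resulting space is, over each $u\in\mathcal N_p^a$, the arc $[\rho_1(u),\rho_2(u)]$ with its two endpoints sent to $P_-$ and $P_+$ (a suspension fibre), while over $u\notin\mathcal N_p^a$ it carries two ``flaps'' $(\rho_1(u),\sigma_1(u)]$ and $[\sigma_2(u),\rho_2(u))$ hanging from $P_-$ and $P_+$ respectively; pushing the free ends $\sigma_i(u)$ back towards $\rho_i(u)$ — and using that $\sigma_i(u)\to1$ on $\partial\mathcal N_p^a$, so that the flaps merge continuously into the suspension fibres — one retracts this space onto $\Sigma$. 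Since $\overline r$ and $a$ are regular values the pair $(I_p^a,I_p^{\overline r})$ is a good pair (its members have the homotopy type of CW complexes), so $H_*(I_p^a,I_p^{\overline r})\cong H_*(\Sigma,\{P_-,P_+\})$; the long exact sequence of $(\Sigma,\{P_-,P_+\})$, the suspension isomorphism $\widetilde H_k(\Sigma)\cong\widetilde H_{k-1}(\mathcal N_p^a)$, and the connectedness of $\Sigma$ give $H_0(\Sigma,\{P_-,P_+\})=0$, $\dim H_1(\Sigma,\{P_-,P_+\})=\dim\widetilde H_0(\mathcal N_p^a)+1=\dim H_0(\mathcal N_p^a)$ and $H_k(\Sigma,\{P_-,P_+\})\cong\widetilde H_{k-1}(\mathcal N_p^a)$ for $k\ge2$, that is
\[
\mathcal P_t(I_p^a,I_p^{\overline r})=\mathcal P_t(\Sigma,\{P_-,P_+\})=t\,\mathcal P_t(\mathcal N_p^a),
\]
which is the assertion.

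The main obstacle is precisely the homotopy equivalence of the third paragraph: one must produce deformations in the coordinates $(u,s)$ that retract the ``flaps'' over $\mathcal N_p\setminus\mathcal N_p^a$ while remaining continuous across the boundary stratum $\partial\mathcal N_p^a$, where the two roots $\sigma_1(u),\sigma_2(u)$ coalesce at $s=1$; this is also the delicate point in \cite{BC}. The ray analysis and the final homological bookkeeping are routine.
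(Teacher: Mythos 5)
The paper itself contains no proof of this lemma (it is declared ``analogous to \cite[Lemma 5.2]{BC}'' and omitted), so your proposal can only be judged against the Benci--Cerami scheme it invokes --- and your route is indeed the intended one: the ray analysis, the identification of $I_p^{\overline r}$ as two contractible components, and the final homological bookkeeping (the suspension shift producing the factor $t$) are all correct. One preliminary remark: you assume $\overline r<\mathfrak m_p$, whereas the hypothesis only gives $\overline r<\mathfrak m_{p,r}$, and $\mathfrak m_p<\mathfrak m_{p,r}$. The description of $I_p^{\overline r}$ as two contractible components disjoint from $\mathcal N_p$ genuinely requires $\overline r<\mathfrak m_p$, so either the statement carries a typo or the range $[\mathfrak m_p,\mathfrak m_{p,r})$ must be excluded; you should not silently strengthen the hypothesis.

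The genuine gap is the step you yourself single out as ``the main obstacle'': the homotopy equivalence of pairs $(I_p^a,I_p^{\overline r})\simeq(\Sigma,\{P_-,P_+\})$ is not proved, and the deformation you sketch fails as described. Pushing the free ends of the flaps fibrewise --- say $s\mapsto(1-\tau)s+\tau\rho_1(u)$ on $[\rho_1(u),\sigma_1(u)]$ for $u\notin\mathcal N_p^a$, while fixing the fibres over $\mathcal N_p^a$ --- is discontinuous across $\partial\mathcal N_p^a$: take $u_n\to u_0$ with $I_p(u_n)>a=I_p(u_0)$ and $s_n\to s_0\in(\rho_1(u_0),1)$; since $\sigma_1(u_n)\to1$ one has $s_n\le\sigma_1(u_n)$ eventually, so the deformation carries $s_nu_n$ to a point near $\rho_1(u_0)u_0$ while leaving $s_0u_0$ fixed. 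The observation that $\sigma_i(u)\to1$ on $\partial\mathcal N_p^a$ shows that the \emph{space} degenerates nicely along the boundary stratum, but it does not make this \emph{map} continuous; collapsing $B$ and $T$ first does not help. A correct construction must also move points in the $u$-direction (e.g.\ sliding them toward $\partial\mathcal N_p^a$ through a collar of the regular level set $\{I_p=a\}$ inside $\mathcal N_p$ --- this is precisely where the regularity of $a$ and the $(PS)$ condition must enter, and neither is used anywhere in your argument), or one must argue via excision and the product pair $(D,\partial D)\cong(\mathcal N_p^a\times[0,1],\mathcal N_p^a\times\{0,1\})$ with $D=\{su:u\in\mathcal N_p^a,\ \rho_1(u)\le s\le\rho_2(u)\}$, again handling the flaps by a tangential deformation. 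As written, the central step of the proof is asserted rather than established.
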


\medskip

In particular we have the following:
\begin{corollary}\label{t}
Let $\overline r\in (0,\mathfrak m_{p,r})$. Then
\begin{eqnarray*}
\mathcal P_{t}(I_{p}^{b_{p}^{*}},I_{p}^{\overline r})&=&t\Big(\mathcal P_{t}(\Omega)+\mathcal Q(t)\Big), \qquad \mathcal Q\in \mathbb P, \label{prima}\\
\mathcal P_{t}(H^{1}_{0}(\Omega), I_{p}^{\overline r})&=& 
t.\label{seconda}
\end{eqnarray*}
\end{corollary}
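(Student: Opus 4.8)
The plan is to invoke the preceding Lemma (equation \eqref{PtP}) twice, once with $a=b_p^*$ and once with $a=+\infty$, and to combine the outcome with \eqref{obvious1} and with the description of $\mathcal N_p$ provided by Lemma \ref{lemmanehari}. Throughout, $\overline r\in(0,\mathfrak m_{p,r})$ is the fixed value in the statement, which is precisely the range of admissible $\overline r$ in the preceding Lemma, so no further hypothesis on $\overline r$ is needed.

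For the first identity I would argue as follows. By construction $b_p^*>\mathfrak m_{p,r}>\overline r$ is a regular level of $I_p$, hence the preceding Lemma applies with $a=b_p^*$ and gives $\mathcal P_t(I_p^{b_p^*},I_p^{\overline r})=t\,\mathcal P_t(\mathcal N_p^{b_p^*})$. Substituting \eqref{obvious1}, namely $\mathcal P_t(\mathcal N_p^{b_p^*})=\mathcal P_t(\Omega)+\mathcal Q(t)$ with $\mathcal Q\in\mathbb P$, yields exactly $\mathcal P_t(I_p^{b_p^*},I_p^{\overline r})=t\big(\mathcal P_t(\Omega)+\mathcal Q(t)\big)$.

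For the second identity I would take $a=+\infty$ in the preceding Lemma — this is legitimate, since $+\infty$ is by convention a regular level and $I_p^{+\infty}=H^1_0(\Omega)$ — to obtain $\mathcal P_t(H^1_0(\Omega),I_p^{\overline r})=t\,\mathcal P_t(\mathcal N_p)$. It then remains to compute $\mathcal P_t(\mathcal N_p)$: by \eqref{lemmanehariv} of Lemma \ref{lemmanehari}, $\mathcal N_p$ is homeomorphic to the unit sphere $\mathbb S=\{v\in H^1_0(\Omega):\|v\|=1\}$, which is contractible because $H^1_0(\Omega)$ is infinite-dimensional; hence $H_0(\mathbb S)\cong\mathbb F$ and $H_k(\mathbb S)=0$ for $k\ge1$, so that $\mathcal P_t(\mathcal N_p)=\mathcal P_t(\mathbb S)=1$, and therefore $\mathcal P_t(H^1_0(\Omega),I_p^{\overline r})=t$. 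The only slightly delicate point — the closest thing to an obstacle here — is the contractibility of the infinite-dimensional unit sphere, which is nonetheless a classical fact; everything else is a direct bookkeeping with the preceding Lemma and \eqref{obvious1}.
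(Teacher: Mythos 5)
Your proof is correct and follows exactly the paper's argument: apply the preceding Lemma with $a=b_p^*$ together with \eqref{obvious1} for the first identity, and with $a=+\infty$ together with the contractibility of $\mathcal N_p$ (via its homeomorphism with the infinite-dimensional unit sphere from \eqref{lemmanehariv} of Lemma \ref{lemmanehari}) for the second. Your explicit remark that the infinite-dimensional sphere is contractible just makes precise what the paper leaves implicit.
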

\begin{proof}
The first identity follows by    \eqref{obvious1} and \eqref{PtP} by choosing  $a=b^{*}_{p}$.
The second one follows by \eqref{PtP} with $a=+\infty$ and
noticing that the Nehari manifold $\mathcal N_{p}$ is contractible in itself (see
\eqref{lemmanehariv} in Lemma \ref{lemmanehari}).
\end{proof}

To deal with critical points above the level $b^{*}_{p},$ we need also the following
result whose proof is purely algebraic and is omitted. The interested reader may consult
\cite[Lemma 5.6]{BC}.
\begin{lemma}\label{quadrato}
It holds
$$\mathcal P_{t}(H^{1}_{0}(\Omega),I_{p}^{b_{p}^{*}})=t^{2}\Big(\mathcal P_{t}(\Omega)+\mathcal Q(t)-1\Big), \qquad \mathcal Q\in \mathbb P.$$
\end{lemma}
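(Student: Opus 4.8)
The plan is to run the purely homological argument of \cite[Lemma 5.6]{BC}, feeding in the two identities of Corollary \ref{t} and the contractibility of $H^{1}_{0}(\Omega)$. Write $X=H^{1}_{0}(\Omega)$, $A=I_{p}^{b_{p}^{*}}$ and $B=I_{p}^{\overline r}$, with $\overline r\in(0,\mathfrak m_{p,r})$ a regular level, so $B\subset A\subset X$. Since $X$ is convex, hence contractible, the long exact sequences of the pairs $(X,B)$ and $(X,A)$ collapse to natural isomorphisms $H_{k}(X,B)\cong\widetilde H_{k-1}(B)$ and $H_{k}(X,A)\cong\widetilde H_{k-1}(A)$ for every $k$ (and $H_{0}(X,\cdot)=0$, as $X$ is connected). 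In particular $\mathcal P_{t}(X,A)=t\,\widetilde{\mathcal P}_{t}(A)$, where $\widetilde{\mathcal P}_{t}$ denotes the Poincar\'e polynomial formed with reduced homology, so the whole matter reduces to computing $\widetilde{\mathcal P}_{t}(A)$.

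First, the second identity of Corollary \ref{t} reads $t=\mathcal P_{t}(X,B)=\sum_{k}\dim\widetilde H_{k-1}(B)\,t^{k}$, i.e. $\dim\widetilde H_{0}(B)=1$ and $\widetilde H_{j}(B)=0$ for $j\ge1$: the sublevel $I_{p}^{\overline r}$ has exactly two connected components and no higher homology. Second, I would check that $A=I_{p}^{b_{p}^{*}}$ is path-connected, so that $\widetilde H_{0}(A)=0$ and $H_{0}(A,B)=0$. Indeed $I_{p}(v)\le\tfrac12\|v\|^{2}$ forces $A\supset\{\|v\|^{2}\le 2b_{p}^{*}\}$, a ball about the origin; moreover the maximum of $I_{p}$ along any ray is attained at its unique intersection with $\mathcal N_{p}$, so along the ray through the ground state $\mathfrak g_{p}$ this maximum equals $\mathfrak m_{p}<\mathfrak m_{p,r}<b_{p}^{*}$ and the entire ray lies in $A$; finally, using \eqref{Lemafviii} of Lemma \ref{Lema f} one verifies that $\sup_{w\in\mathcal C}I_{p}(Tw)\to-\infty$ as $T\to+\infty$ for every compact $\mathcal C\subset\mathbb S$, so, $\mathbb S$ being path-connected, any point of $A$ can be joined to $0$ by moving outwards along its own ray, then sliding across a set $\{Tw:w\in\gamma([0,1])\}$ with $T$ large, then coming back down the ray through $\mathfrak g_{p}$.

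With these facts in hand I would use the reduced long exact sequence of the pair $(A,B)$,
$$\cdots\longrightarrow\widetilde H_{k}(B)\longrightarrow\widetilde H_{k}(A)\longrightarrow H_{k}(A,B)\longrightarrow\widetilde H_{k-1}(B)\longrightarrow\cdots.$$
For $k\ge2$ this gives $\widetilde H_{k}(A)\cong H_{k}(A,B)$, while its tail is $0\to\widetilde H_{1}(A)\to H_{1}(A,B)\to\widetilde H_{0}(B)\to 0$, hence $\dim\widetilde H_{1}(A)=\dim H_{1}(A,B)-1$. By the first identity of Corollary \ref{t}, $\mathcal P_{t}(A,B)=t\,\mathcal S(t)$ with $\mathcal S(t):=\mathcal P_{t}(\Omega)+\mathcal Q(t)$, so $\dim H_{k}(A,B)$ is the coefficient of $t^{k-1}$ in $\mathcal S$; in particular $\dim H_{1}(A,B)=\mathcal S(0)=1+\mathcal Q(0)$, using that $\Omega$ is connected. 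A one-line bookkeeping then yields
$$\widetilde{\mathcal P}_{t}(A)=\mathcal Q(0)\,t+t\bigl(\mathcal S(t)-\mathcal S(0)\bigr)=t\bigl(\mathcal S(t)-1\bigr)=t\bigl(\mathcal P_{t}(\Omega)+\mathcal Q(t)-1\bigr),$$
whence $\mathcal P_{t}(X,A)=t\,\widetilde{\mathcal P}_{t}(A)=t^{2}\bigl(\mathcal P_{t}(\Omega)+\mathcal Q(t)-1\bigr)$; the polynomial in brackets lies in $\mathbb P$ since its constant term is $\mathcal Q(0)\ge0$ and all higher coefficients are non-negative.

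The only genuinely non-formal input is the topological description of the sublevels — that $I_{p}^{\overline r}$ has exactly two components and that $I_{p}^{b_{p}^{*}}$ is connected; once this is granted, the rest is the algebraic manipulation of \cite[Lemma 5.6]{BC}. I expect the delicate step to be the uniform estimate $\sup_{\mathbb S}I_{p}(T\,\cdot)\to-\infty$ over compacta, which is exactly what is needed to connect the ``far'' part of $A$ to the origin.
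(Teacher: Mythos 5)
Your argument is correct and is, in substance, the proof of \cite[Lemma 5.6]{BC} that the paper points to: whether one runs the long exact sequence of the triple $(H^{1}_{0}(\Omega),I_{p}^{b_{p}^{*}},I_{p}^{\overline r})$ (as in \cite{BC}) or, as you do, the reduced sequence of the pair $(I_{p}^{b_{p}^{*}},I_{p}^{\overline r})$ combined with the contractibility of $H^{1}_{0}(\Omega)$, the algebra is the same, and you correctly isolate the one non-formal input, namely that the sublevel $A=I_{p}^{b_{p}^{*}}$ is path-connected (equivalently, that the boundary map $H_{1}(A,B)\to\widetilde H_{0}(B)$ is onto). The bookkeeping $\widetilde{\mathcal P}_{t}(A)=t(\mathcal S(t)-1)$ and the non-negativity of the coefficients are right.

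Two small points on the connectedness step. First, the recipe ``join any point of $A$ to $0$ by moving \emph{outwards} along its own ray'' is not literally correct: for $v\in A$ lying before the peak of $t\mapsto I_{p}(tv)$ (whose maximum $I_{p}(t_{p}(v)v)$ may exceed $b_{p}^{*}$), moving outwards exits $A$. The fix is the case split you already have the tools for: since $g(t)=I_{p}(tv)$ increases on $[0,t_{p}(v)]$ and decreases afterwards, either $1\le t_{p}(v)$, in which case the inward segment $\{tv:0\le t\le1\}$ stays in $A$ and reaches the origin directly, or $1\ge t_{p}(v)$, in which case the outward ray stays in $A$ and your ``slide at radius $T$, come down through $\mathfrak g_{p}$'' route applies. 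Second, the uniform divergence $\sup_{w\in\gamma([0,1])}I_{p}(Tw)\to-\infty$ that you flag as delicate is in fact routine: by \eqref{Lemafviii} of Lemma \ref{Lema f}, $\int_{\Omega}|f(Tw)|^{p}\ge CT^{p/2}\bigl(|w|_{p/2}^{p/2}-T^{-p/2}|\Omega|\bigr)$, and $\inf_{\gamma}|w|_{p/2}^{p/2}>0$ by compactness and continuity of the $L^{p/2}$-norm on $\mathbb S$, so $I_{p}(Tw)\le \tfrac12T^{2}-C'T^{p/2}$ uniformly, with $p/2>2$. With these adjustments the proof is complete.
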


As a consequence of these facts we have

\begin{corollary}\label{separato}
Suppose that 
the set $\mathfrak  K_{p}$
is discrete. Then
\begin{eqnarray*}\label{1}
\sum_{u\in \mathfrak K^{b_{p}^{*}}_{p}}\mathcal I_{t}(u)=t\Big(\mathcal P_{t}(\Omega)+\mathcal Q(t)\Big)+(1+t)\mathcal Q_{1}(t)
\end{eqnarray*}
and 
\begin{eqnarray*}
\sum_{u\in(\mathfrak K_{p})_{b_{p}^{*}}} \mathcal I_{t}(u)=t^{2}\Big(\mathcal P_{t}(\Omega)+\mathcal Q(t)-1\Big)+(1+t)\mathcal Q_{2}(t),
\end{eqnarray*}
where $\mathcal Q,\mathcal Q_{1}, \mathcal Q_{2}\in \mathbb P.$

\end{corollary}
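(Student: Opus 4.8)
The plan is to apply the Morse relations (Morse inequalities in their equality form, i.e. the Morse-theoretic Poincaré polynomial identity) to the pair of sublevels already analysed in Corollary \ref{t} and Lemma \ref{quadrato}. Recall that if $I_p$ satisfies the Palais--Smale condition and has only isolated (hence, since $\mathfrak K_p$ is assumed discrete and $\mathfrak K_p^{b_p^*}$ is compact by $(PS)$, finitely many in each sublevel strip) critical points, then for any pair of regular values $\alpha<\beta$ one has
$$
\sum_{u\in \mathfrak K_p\cap I_p^{-1}([\alpha,\beta])} \mathcal I_t(u) = \mathcal P_t(I_p^\beta, I_p^\alpha) + (1+t)\mathcal Q(t)
$$
for some $\mathcal Q\in\mathbb P$. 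This is the standard subadditivity/Morse identity; I would simply invoke it, citing \cite{BC} in the same spirit as the surrounding lemmas.

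First I would fix $\overline r\in(0,\mathfrak m_{p,r})$ as in Corollary \ref{t}; since $\mathfrak m_p$ is the minimum of $I_p$ on $\mathcal N_p$ and $I_p\ge 0$ there while $I_p$ has no critical points with $0<I_p(u)<\mathfrak m_p$ (all nonzero critical points lie on $\mathcal N_p$, so have energy $\ge\mathfrak m_p>\overline r$, and the only critical point below is $0$ with $I_p(0)=0$), the sublevel $I_p^{\overline r}$ is contractible (it deformation retracts onto $\{0\}$, or one argues as in \cite{BC}), and in any case $\mathfrak K_p^{\overline r}$ contributes only the trivial critical point which we discard as usual — this is why the relevant relative homology is the one computed in Corollary \ref{t}. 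Then, applying the Morse identity to the pair $(I_p^{b_p^*}, I_p^{\overline r})$ and using the first line of Corollary \ref{t}, namely $\mathcal P_t(I_p^{b_p^*},I_p^{\overline r}) = t(\mathcal P_t(\Omega)+\mathcal Q(t))$, gives
$$
\sum_{u\in \mathfrak K_p^{b_p^*}} \mathcal I_t(u) = t\big(\mathcal P_t(\Omega)+\mathcal Q(t)\big) + (1+t)\mathcal Q_1(t),
$$
which is the first assertion (after absorbing the contribution of $0$, if one keeps it, into $\mathcal Q$; the cleanest route is to work with $I_p^+$ or to note $0$ is not in the relevant strip $(\overline r, b_p^*]$).

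Second, for the critical points above level $b_p^*$, I would apply the Morse identity to the pair $(H^1_0(\Omega), I_p^{b_p^*})$ — here $H^1_0(\Omega)$ plays the role of ``$I_p^{+\infty}$'', which is legitimate because $I_p$ satisfies $(PS)$ and is unbounded below only along rays, so $\mathfrak K_p$ above any level is still a finite set and the deformation arguments go through as in \cite{BC}. Using Lemma \ref{quadrato}, $\mathcal P_t(H^1_0(\Omega), I_p^{b_p^*}) = t^2(\mathcal P_t(\Omega)+\mathcal Q(t)-1)$, the Morse identity yields
$$
\sum_{u\in (\mathfrak K_p)_{b_p^*}} \mathcal I_t(u) = t^2\big(\mathcal P_t(\Omega)+\mathcal Q(t)-1\big) + (1+t)\mathcal Q_2(t),
$$
with $\mathcal Q_2\in\mathbb P$, which is the second assertion. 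Note the polynomial $\mathcal Q$ appearing here is the same one as in \eqref{obvious1}, so consistency of notation is automatic.

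The only genuine point requiring care — the main obstacle — is the justification that the Morse relations apply at ``infinity'', i.e. that the pair $(H^1_0(\Omega), I_p^{b_p^*})$ is admissible: one must know that $I_p$ has no critical points escaping to infinity in energy and that $H^1_0(\Omega)$ is, up to deformation, the union of the sublevels, which follows from the $(PS)$ condition for $I_p$ (valid since $p\in[\overline p,22^*)$) together with the fact, already recorded in \eqref{lemmanehariv} of Lemma \ref{lemmanehari}, that $\mathcal N_p$ — and hence, via the radial retraction along rays, the whole space minus the origin — is well-understood topologically. All of this is handled exactly as in \cite[Section 5]{BC}, so I would state it as such and refer the reader there rather than reproving it, consistently with the fact that the excerpt already omits the proofs of the two lemmas feeding into this corollary.
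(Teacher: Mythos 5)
Your proposal is correct and follows essentially the same route as the paper: the paper's proof simply invokes the Morse relations for the pairs $(I_{p}^{b_{p}^{*}}, I_{p}^{\overline r})$ and $(H^{1}_{0}(\Omega), I_{p}^{b_{p}^{*}})$ and then substitutes the Poincar\'e polynomials computed in Corollary \ref{t} and Lemma \ref{quadrato}. Your additional remarks on the admissibility of the pair at infinity and on discarding the trivial critical point $u=0$ from the strip $(\overline r, b_{p}^{*}]$ are sensible care the paper leaves implicit, but they do not change the argument.
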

\begin{proof}
Indeed the Morse Theory 
gives
\begin{eqnarray*}\label{1}
\sum_{u\in \mathfrak K^{b_{p}^{*}}_{p}}\mathcal I_{t}(u)=\mathcal P_{t}(I_{p}^{b_{p}^{*}}, I_{p}^{\overline r})+(1+t)\mathcal Q_{1}(t)\\
\end{eqnarray*}
and 
\begin{eqnarray*}
\sum_{u\in(\mathfrak K_{p})_{b_{p}^{*}}} \mathcal I_{t}(u)=
\mathcal P_{t}(H^{1}_{0}(\Omega),I_{p}^{b_{p}^{*}})+(1+t)\mathcal Q_{2}(t)
\end{eqnarray*}
so that, by using  Corollary \ref{t} and Lemma \ref{quadrato}, we easily conclude.
\end{proof}

Finally, by Corollary \ref{separato} we get
$$\sum_{u\in \mathfrak K_{p}}\mathcal I_{t}(u)=t\mathcal P_{t}(\Omega)+t^{2}\Big(\mathcal P_{t}(\Omega)-1\Big)+t(1+t)\mathcal Q(t)$$
for some $\mathcal Q\in \mathbb P.$ 
We easily deduce that, if the critical points of $I_{p}$
are non-degenerate, then they are  at least $2\mathcal P_{1}(\Omega)-1$,
if counted with their multiplicity.

The proof of Theorem \ref{th:mainMorse} is thereby complete.
\medskip



\end{document}